\documentclass[a4paper,twoside,10pt,leqno]{article}
\usepackage{expl3}
\usepackage{amssymb,amsmath,amsthm}
\usepackage[all]{xy}
\usepackage[nottoc, notlof, notlot]{tocbibind}
\usepackage{stmaryrd}
\usepackage{tikz}
\usetikzlibrary{arrows}


\usepackage[english]{babel}
\IfFileExists{stix.sty}
{
    \usepackage{stix}
}
{
    \usepackage{mathrsfs}
    \usepackage{fouriernc}
    \newcommand{\nequiv}{\not\equiv}
}
\DeclareSymbolFont{calletters}{OMS}{cmsy}{m}{n}
\DeclareSymbolFontAlphabet{\mathcal}{calletters}
\IfFileExists{tgpagella.sty}
{
    \usepackage{tgpagella}
}{}
\usepackage[T1]{fontenc}
\usepackage[utf8x]{inputenc}

\makeatletter
\def\blfootnote{\gdef\@thefnmark{}\@footnotetext}
\makeatother

\title{Arc spaces, motivic measure and Lipschitz geometry of real algebraic sets}
\newcommand\shorttitle{Arc spaces, motivic measure and Lipschitz geometry}
\date{December 14, 2018}
\author{
\begin{tabular}[t]{c@{\extracolsep{8em}}c} 
\scshape Jean-Baptiste Campesato\thanks{Aix Marseille Univ, CNRS, Centrale Marseille, I2M, Marseille, France \newline \hspace*{1.8em} E-mail: \texttt{Jean-Baptiste.Campesato@univ-amu.fr} \newline \ }
&
\scshape Toshizumi Fukui\thanks{Department of Mathematics, Faculty of Science, Saitama University, \newline \hspace*{1.8em} 255 Shimo-Okubo, Sakura-ku, Saitama, 338-8570, Japan \newline \hspace*{1.8em} E-mail: \texttt{tfukui@rimath.saitama-u.ac.jp} \newline \ }
\\
&
\\
\scshape Krzysztof Kurdyka\thanks{Univ. Grenoble Alpes, Univ. Savoie Mont Blanc, CNRS, LAMA, 73000 Chamb\'ery, France \newline \hspace*{1.8em} E-mail: \texttt{Krzysztof.Kurdyka@univ-savoie.fr} \newline \ }~~\footnotemark[5]
&
\scshape Adam Parusi\'nski\thanks{Universit\'e C\^ote d'Azur, Universit\'e Nice Sophia Antipolis, CNRS, LJAD, France \newline \hspace*{1.8em} E-mail: \texttt{Adam.Parusinski@unice.fr} \newline \ }~~\thanks{Partially supported by ANR project LISA (ANR-17-CE40-0023-03).}
\end{tabular}
}

\usepackage{hyperref}
\usepackage{color}
\definecolor{darkred}{rgb}{.5,0,0}
\definecolor{darkgreen}{rgb}{0,.5,0}
\definecolor{darkblue}{rgb}{0,0,.5}
\hypersetup{
    pdfencoding=unicode,
    colorlinks=true,
    linkcolor=darkred,
    citecolor=darkgreen,
    urlcolor=darkblue,
    bookmarks=false,
    pdftoolbar=true,
    pdfmenubar=true,
    pdffitwindow=true,
    pdfstartview={FitH},
    pdfcreator={},
    pdfproducer={},
    pdfnewwindow=true,
}
\makeatletter
    \hypersetup{pdftitle={\@title},pdfauthor={J.-B. Campesato, T. Fukui, K. Kurdyka, A. Parusi\'nski}}
\makeatother

\usepackage[inner=4cm,outer=3cm,top=4cm,bottom=4cm,includeheadfoot,headsep=.5cm]{geometry}
\usepackage{fancyhdr}
\renewcommand{\thepage}{\arabic{page}}
\lhead[\textbf{\thepage}]{}
\chead[\shorttitle]{\scshape J.-B. Campesato, T. Fukui, K. Kurdyka and A. Parusi\'nski}
\rhead[]{\textbf{\thepage}}
\fancyfoot{}
\pagestyle{fancy}
\fancypagestyle{plain}{
\fancyhf{}

}

\theoremstyle{plain}
\newtheorem{thm}{Theorem}[section]
\newtheorem*{nthm}{Theorem}
\newtheorem{prop}[thm]{Proposition}
\newtheorem{cor}[thm]{Corollary}
\newtheorem{lemma}[thm]{Lemma}
\theoremstyle{definition}
\newtheorem{defn}[thm]{Definition}

\newtheorem{rem}[thm]{Remark}

\newtheorem{notation}[thm]{Notation}
\newtheorem{assumption}[thm]{Assumption}

\usepackage{enumitem}
\setlist[itemize]{labelindent=.6em, itemindent=1em, leftmargin=!, label=\textbullet}


\usepackage{ifmtarg}
\makeatletter
  \newcommand{\TODO}[1]{\@ifmtarg{#1}{\emph{\textbf{TODO}}~}{\emph{\textbf{TODO:}~#1~}}}
\makeatother

\newcommand{\AS}{\mathcal{AS}}
\renewcommand{\L}{\mathcal L}
\newcommand{\R}{\mathbb R}
\renewcommand{\P}{\mathbb P}
\newcommand{\N}{\mathbb N_{\ge0}}
\newcommand{\Np}{\mathbb N_{>0}}
\newcommand{\Ninfty}{\mathbb N_{\ge0}\cup\{\infty\}}
\newcommand{\sing}{\mathrm{sing}}
\newcommand{\ord}{\operatorname{ord}}
\newcommand{\LL}{\mathbb L}
\newcommand{\M}{\mathcal M}
\newcommand{\Z}{\mathbb Z}
\renewcommand{\d}{\mathrm d}
\newcommand{\Jac}{\operatorname{Jac}}
\newcommand{\jac}{\operatorname{jac}}
\newcommand{\detjac}{\operatorname{jac}}
\newcommand{\Reg}{\operatorname{Reg}}
\newcommand{\pd}[2]{\frac{\partial{#1}}{\partial{#2}}}
\newcommand{\J}{\mathcal J}
\renewcommand{\Im}{\operatorname{Im}}
\renewcommand{\div}{\operatorname{div}}
\newcommand{\rank}{\operatorname{rank}}
\newcommand{\Pol}{\mathcal{P}}
\newcommand{\Rf}{\mathcal{R}}

\makeatletter
\newcommand{\vast}{\bBigg@{4}}
\newcommand{\Vast}{\bBigg@{5}}
\makeatother

\newcommand{\quotient}[2]{\left.\raisebox{.05em}{$#1$}\middle/\raisebox{-.05em}{$#2$}\right.}

\usepackage{ifthen}
\newcommand{\clos}[2][]{\ensuremath{\overline{#2}\ifthenelse{\equal{#1}{}}{}{^{#1}}}}

\begin{document}
\maketitle

\begin{abstract}
We investigate connections between Lipschitz geometry of real algebraic varieties and properties of their arc spaces. For this purpose we develop motivic integration in the real algebraic set-up. We construct a motivic measure on the space of real analytic arcs. We use this measure to define a real motivic integral which admits a change of variables formula not only for the birational but also for generically one-to-one Nash maps.

As a consequence we obtain an inverse mapping theorem which holds for continuous rational maps and, more generally, for generically arc-analytic maps. These maps appeared recently in the classification of singularities of real analytic function germs.

Finally, as an application, we characterize in terms of the motivic measure, germs of arc-analytic homeomorphism between real algebraic varieties which are bi-Lipschitz for the inner metric.
\end{abstract}

\vfill

{\footnotesize
\noindent 2010 Mathematics Subject Classification. Primary: 14P99. Secondary: 26A16, 14E18, 14B05. \\
Keywords: Real algebraic geometry, motivic integration, singularities, Lipschitz maps, Nash functions, arc-analytic maps, arc-symmetric sets, virtual Poincaré polynomial.
}
\newpage

\tableofcontents

\section{Introduction}
In this paper we establish relations between the arc space and the Lipschitz geometry of a singular real algebraic variety.  

The interest in the Lipschitz geometry of real analytic and algebraic spaces emerged in the 70's of the last century by a conjecture of Siebenmann and Sullivan: there are only countably many local Lipschitz structures on real analytic spaces. Subsequently the Lipschitz geometry of real and complex algebraic singularities attracted much attention and various methods have been developed to study it: stratification theory \cite{Mos85,Par93}, $L$-regular decompositions \cite{Kur92,Par94-L,KP06,Paw08}, Lipschitz triangulations \cite{Val05}, non-archimedean geometry \cite{HY}, and recently, in the complex case, resolution and low dimensional topology \cite{BNP}. In the algebraic case Siebenmann and Sullivan's conjecture was proved in \cite{Par88}. The general analytic case was solved in \cite{Val08}.

In this paper we study various versions of Lipschitz inverse mapping theorems, with respect to the inner distance, for homeomorphisms $f:X\rightarrow Y$ between (possibly singular) real algebraic set germs. Recall that a connected real algebraic, and more generally a connected semialgebraic, subset $X\subset\R^N$ is path-connected (by rectifiable curves), so we have an \emph{inner} distance on $X$, defined by the infimum over the length of rectifiable curves joining two given points in $X$.

We assume that the homeomorphism $f$ is semialgebraic and generically arc-analytic. For instance the recently studied continuous rational maps \cite{Kuc09,KN,KKK} are of this type.

Arc-analytic mappings were introduced to real algebraic geometry in \cite{Kur88}. Those are the mappings sending by composition real analytic arcs to real analytic arcs. It was shown in \cite{BM90,Par94} that the semialgebraic arc-analytic mappings coincide with the blow-Nash mappings. Moreover, by \cite{PP}, real algebraic sets admit algebraic stratifications with local semialgebraic arc-analytic triviality along each stratum.

What we prove can be stated informally as follows: if $f^{-1}$ is Lipschitz, then so is $f$ itself. The problem is non-trivial even when the germs $(X,x)$ and $(Y,y)$ are non-singular \cite{FKP}. When these germs are singular, then the problem is much more delicate. In fact we have to assume that the motivic measures of the real analytic arcs drawn on $(X,x)$ and $(Y,y)$ are equal.

Developing a rigorous theory of motivic measure on the space of real analytic arcs for real algebraic sets is another main goal of this paper.

We state below a concise version of our main results. For more precise and more general statements see Theorems \ref{thm:IFT} and \ref{thm:mainLip}.

\begin{nthm}
Let $f:(X,x)\rightarrow(Y,y)$ be the germ of a semialgebraic generically arc-analytic homeomorphism between two real algebraic set germs, that are of pure dimension\footnote{For ease of reading, in the introduction we avoid varieties admitting points which have a structure of smooth submanifold of smaller dimension as in the handle of the Whitney umbrella $\{x^2=zy^2\}\subset\R^3$.} $d$.
Assume that the motivic measures of the real analytic arcs centered at $x$ in $X$ and of the real analytic arcs centered at $y$ in $Y$ are equal (see Section \ref{sec:motivic} for the definition of the motivic measure). Then
\begin{enumerate}
\item If the Jacobian determinant of $f$ is bounded from below then it is bounded from above and $f^{-1}$ is generically arc-analytic.
\item If the inverse $f^{-1}$ of $f$ is Lipschitz with respect to the inner distance then so is $f$.
\end{enumerate}
\end{nthm}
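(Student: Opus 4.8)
The plan is to reduce both assertions to a single analytic estimate: the Jacobian determinant $\jac f$ of $f$ is bounded away from $0$ and $\infty$ on a dense subset of the (regular locus of the) germ $(X,x)$. Assuming this, part (1) is immediate from symmetry — a bound from below on $\jac f$, together with the equality of motivic measures, forces the corresponding bound on $\jac(f^{-1})=(\jac f)^{-1}\circ f^{-1}$, and the fact that $f^{-1}$ then has bounded (hence, in particular, locally integrable) Jacobian combined with semialgebraicity of $f^{-1}$ yields generic arc-analyticity of $f^{-1}$ via the characterization of semialgebraic arc-analytic maps as blow-Nash maps. Part (2) is the classical implication ``bounded Jacobian $+$ $f^{-1}$ inner-Lipschitz $\Rightarrow$ $f$ inner-Lipschitz'': one integrates $\|df\|$ along a rectifiable path realizing (up to $\varepsilon$) the inner distance in $X$ between two nearby points, controls $\|df\|$ in terms of $\jac f$ using that $f$ is a homeomorphism with Lipschitz inverse (so the smallest singular value of $df$ is bounded below), and concludes.

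First I would fix a common algebraic stratification of $(X,x)$ and $(Y,y)$ compatible with $f$ along which $f$ is arc-analytic and, by \cite{PP}, locally semialgebraically arc-analytically trivial; this lets me work on the top-dimensional stratum, a Nash manifold of dimension $d$, where $f$ restricts to a Nash diffeomorphism onto its image. Next I would invoke the real motivic change of variables formula established earlier in the paper: for the (generically one-to-one Nash) map $f$, the pushforward of the motivic measure on arcs in $X$ is computed by integrating $\LL^{-\ord \jac f}$ against the motivic measure on arcs in $Y$. Since by hypothesis the two motivic measures of arcs centered at $x$ and at $y$ agree, the change of variables formula forces the function $\ord\jac f$ to be (essentially) zero on the arc space — more precisely, the set of arcs along which $\jac f$ vanishes to positive order has measure zero, and symmetrically for $f^{-1}$. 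Translating this measure-theoretic statement back to a pointwise statement on a dense semialgebraic subset — using that the order of vanishing of a Nash function along generic arcs detects its order of vanishing along the relevant strata, i.e. a Łojasiewicz-type inequality — gives $C^{-1}\le|\jac f|\le C$ on a dense subset of $\Reg X$ near $x$. This is the step I expect to be the main obstacle: passing from the motivic (arc-space) equality to a genuine two-sided Łojasiewicz bound on $\jac f$ requires carefully relating the motivic measure of the ``thin'' sets $\{\ord\jac f\ge k\}$ to the geometry of the zero locus of $\jac f$, and handling the non-pure or non-smooth points if one does not make the simplifying assumption of the introduction.

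With the two-sided Jacobian bound in hand, part (1) follows as above. For part (2), let $a,b\in X$ be close to $x$, and let $\gamma\colon[0,1]\to Y$ be a rectifiable path from $f(a)$ to $f(b)$ with length at most $(1+\varepsilon)\,d_Y^{\mathrm{inn}}(f(a),f(b))$. Then $f^{-1}\circ\gamma$ is a rectifiable path in $X$ from $a$ to $b$, and since $f^{-1}$ is inner-Lipschitz with some constant $L$, its length is at most $L\cdot\mathrm{length}(\gamma)$; applying $f$ once more and using the lower Jacobian bound to control $\|df\|$ from above along this path (here one uses that $f$ is a Nash diffeomorphism on the top stratum, that the product of singular values of $df$ is $|\jac f|\le C$, and that the smallest singular value is $\ge 1/L$ because $f^{-1}$ is $L$-Lipschitz, whence every singular value is $\le C L^{d-1}$) shows $d_Y^{\mathrm{inn}}(f(a),f(b))\le CL^{d-1}(1+\varepsilon)\,d_X^{\mathrm{inn}}(\ldots)$ — wait, the inequality we want is in the other direction: it gives $d_X^{\mathrm{inn}}(a,b)\le \mathrm{length}(f^{-1}\circ\gamma)\le L(1+\varepsilon)d_Y^{\mathrm{inn}}(f(a),f(b))$, which is the Lipschitz bound for $f^{-1}$ again; to get it for $f$ one instead takes a path in $X$ realizing $d_X^{\mathrm{inn}}(a,b)$, pushes it forward by $f$, and bounds its length by $\int\|df\|\le CL^{d-1}\,d_X^{\mathrm{inn}}(a,b)$, so $d_Y^{\mathrm{inn}}(f(a),f(b))\le CL^{d-1}\,d_X^{\mathrm{inn}}(a,b)$, i.e. $f$ is inner-Lipschitz. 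A small amount of care is needed because the length-minimizing or near-minimizing path may pass through the singular locus of $X$; one handles this by first perturbing the path into $\Reg X$ without increasing its length by more than $\varepsilon$ (possible by semialgebraic triviality along the stratification), or by noting that $\Reg X$ has full measure and $\|df\|$ is bounded, so the contribution of the singular part is negligible.
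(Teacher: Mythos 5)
There are genuine gaps. The most serious one is your claimed route to the generic arc-analyticity of $f^{-1}$: a semialgebraic homeomorphism with Jacobian bounded above and below need not be generically arc-analytic (already $g(x)=x$ for $x\ge0$, $g(x)=2x$ for $x<0$ refutes ``semialgebraic $+$ bounded Jacobian $\Rightarrow$ blow-Nash''), so Lemma \ref{lem:BNisGenAA} cannot be invoked the way you do. The paper's proof of this point is a separate, substantial argument: from the order comparison one first extracts the equality $\mu_{\L(Y)}\left((q\circ\sigma)_*(\L(M,E))\right)=\mu_{\L(Y)}\left(\L(Y,y)\right)$, i.e.\ the arcs liftable through the resolution diagram have full measure in $\L(Y,y)$, and then shows by contradiction --- using the two one-sided Puiseux expansions of $(q\circ\sigma)^{-1}\circ\gamma$, the H\"older continuity of $(q\circ\sigma)^{-1}$ to produce a whole cylinder $\pi_N^{-1}(\pi_N(\gamma))$ of non-liftable arcs, and the non-vanishing of the measure of that cylinder --- that a generic non-liftable arc would contradict this equality. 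None of this is present in your sketch.

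Second, your central analytic step --- converting the motivic identity into a two-sided pointwise \L{}ojasiewicz bound $C^{-1}\le|\jac f|\le C$ on a dense subset --- is both unproved (you flag it yourself as the main obstacle) and not what is needed: the paper never establishes such a bound and works entirely arc-wise on a resolution diagram $X\xleftarrow{p}\Gamma\xleftarrow{\sigma}M$, $q=f\circ p$, on which the Jacobian ideals are monomialized. Note also that the change of variables formula (Theorem \ref{thm:CoV}) applies to proper generically one-to-one \emph{Nash} maps from a \emph{non-singular} source, so it cannot be applied to the blow-Nash map $f$ on the possibly singular $X$ directly as you propose; and the measure equality alone does not force $\ord_t\jac$ to be essentially zero --- the paper's mechanism is to multiply both sides by $\sum_{i\ge0}\LL^{-i}$, observe that the hypothesis ``Jacobian bounded from below'' makes every summand $\left[\{\ord_t\jac_{q\circ\sigma}\le n\}\setminus\{\ord_t\jac_{p\circ\sigma}\le n\}\right]\LL^{-(n+2)d}$ nonnegative for the order of Definition \ref{defn:order}, while the inclusion $(q\circ\sigma)_*(\L(M,E))\subset\L(Y,y)$ makes the left-hand side nonpositive, whence everything vanishes. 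Your treatment of part (2) (the cofactor/singular-value estimate combining $|\jac f|$ bounded with $\|d(f^{-1})\|\le L$, plus the reduction of inner-Lipschitz to a bound on $\|D_pf\|$ via $K_\varepsilon$-regular decompositions) is essentially the paper's Propositions \ref{prop:inlip} and \ref{prop:CforIL} and is fine once the preceding steps are repaired.
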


The proof of this theorem is based on motivic integration. Recall that in the case of complex algebraic varieties, motivic integration was introduced by M. Kontsevitch for non-singular varieties in order to avoid the use of $p$-adic integrals. Then the theory was developped and extended to the singular case in \cite{DL99,Bat98,DL02,Loo02}. The motivic measure is defined on the space of formal arcs drawn on an algebraic variety and takes values in a Grothendieck ring which encodes all the additive invariants of the underlying category. One main ingredient consists in reducing the study to truncated arcs in order to work with finite dimensional spaces. Notice that since the seminal paper of Nash \cite{Nash}, it has been established that the arc space of a variety encodes a great deal of information about its singularities.

In the real algebraic set-up, arguments coming from motivic integration were used in \cite{KP03,Fic05,Cam16,Cam17} to study and classify the singularities of real algebraic function germs.

In the present paper we construct a motivic measure and a motivic integral for possibly singular real algebraic varieties. Similarly to the complex case, the motivic integral comes together with a change of variables formula which is convenient to do actual computations in terms of resolution of singularities. In our real algebraic set-up this formula holds for generically one-to-one Nash maps and not merely for the birational ones.

A first difference of the present construction compared to the complex one, is that we work with real analytic arcs and not with all formal arcs. However, thanks to Artin approximation theorem, this difference is minor. More importantly, it is not possible to follow exactly the construction of the motivic measure in the complex case because of several additional difficulties arising from the absence in the real set-up of the Nullstellensatz and of the theorem of Chevalley (the image of a Zariski-constructible set by a regular mapping is Zariski-constructible).

The real motivic measure and the real motivic integral are constructed and studied in Section \ref{sec:motivic}. 

\section{Geometric framework}
Throughout this paper, we say that a subset $X\subset\R^N$ is an algebraic set if it is closed for the Zariski topology, i.e. $X$ may be described as the intersection of the zero sets of polynomials with real coefficients. We denote by $I(X)$ the ideal of $\R[x_1,\ldots,x_N]$ consisting of the polynomials vanishing on $X$. By noetherianity, we may always assume that the above intersection is indexed by a finite set\footnote{Actually, noticing that $f_1=\cdots=f_s=0\Leftrightarrow f_1^2+\cdots+f_s^2=0$, we may always describe a real algebraic set as the zero-set of only one polynomial.} and that $I(X)=(f_1,\ldots,f_s)$ is finitely generated. The dimension $\dim X$ of $X$ is the dimension of the ring $\Pol(X)=\quotient{\R[x_1,\ldots,x_N]}{I(X)}$ of polynomial functions on $X$.

The ring $\Rf(X)$ of regular functions on $X$ is given by the localization of $\Pol(X)$ with respect to the multiplicative set $\{h\in\Pol(X),\,h^{-1}(0)=\varnothing\}$. Regular maps are the morphisms of real algebraic sets.

Unless otherwise stated, we will always use the Euclidean topology and not the Zariski one (for instance for the notions of homeomorphism, map germ or closure).

We say that a $d$-dimensional algebraic set $X$ is non-singular at $x\in X$ if there exist $g_1,\ldots,g_{N-d}\in I(X)$ and an Euclidean open neighborhood $U$ of $x$ in $\R^N$ such that $U\cap X=U\cap V(g_1,\ldots,g_{N-d})$ and $\rank\left(\pd{g_i}{x_j}(x)\right)=N-d$. Then there exists an open semialgebraic neighborhood of $x$ in $V$ which is a $d$-dimensional Nash submanifold. Notice that the converse doesn't hold \cite[Example 3.3.12.b.]{BCR}. We denote by $\Reg(X)$ the set of non-singular points of $X$. We denote by $X_\sing=X\setminus\Reg(X)$ the set of singular points of $X$, it is an algebraic subset of strictly smaller dimension, see \cite[Proposition 3.3.14]{BCR}.

A semialgebraic subset of $\R^N$ is the projection of an algebraic subset of $\R^{N+m}$ for some $m\in\N$. Actually, by a result of Motzkin \cite{Mot70}, we may always assume that $m=1$. Equivalently, a subset $S\subset\R^N$ is semialgebraic if and only if there exist polynomials $f_i,g_{i,1},\ldots,g_{i,s_i}\in\R[x_1,\ldots,x_N]$ such that $$S=\bigcup_{i=1}^r\left\{x\in\R^N,\,f_i(x)=0,\,g_{i,1}(x)>0,\ldots,g_{i,s_i}(x)>0\right\}.$$
Notice that semialgebraic sets are closed under union, intersection and cartesian product. They are also closed under projection by the Tarski--Seidenberg Theorem. A function is semialgebraic if so is its graph.

We refer the reader to \cite{BCR} for more details on real algebraic geometry.

Let $X$ be a non-singular real algebraic set and $f:X\rightarrow\R$. We say that $f$ is a Nash function if it is $C^\infty$ and semialgebraic. Since a semialgebraic function satisfies a non-trivial polynomial equation and since a smooth function satisfying a non-trivial real analytic equation is real analytic \cite{Mal67,Sic70,Boc70}, we obtain that $f$ is Nash if and only if $f$ is real analytic and satisfies a non-trivial polynomial equation.

A subset of a real analytic variety is said to be arc-symmetric in the sense of \cite{Kur88} if, given a real analytic arc, either the arc is entirely included in the set or it meets the set at isolated points only. We are going to work with a slightly different notion defined in \cite{Par04}. We define $\AS^N$ as the boolean algebra generated by semialgebraic\footnote{A subset of $\P_\R^N$ is semialgebraic if it is for $\P_\R^N$ seen as an algebraic subset of some $\R^M$, or, equivalently, if the intersection of the set with each canonical affine chart is semialgebraic.} arc-symmetric subsets of $\P_\R^N$. We set $$\AS=\bigcup_{N\in\N}\AS^N.$$ 

Formally, a subset $A\subset\P_\R^N$ is an $\AS$-set if it is semialgebraic and if, given a real analytic arc $\gamma:(-1,1)\rightarrow\P_\R^N$ such that $\gamma(-1,0)\subset A$, there exists $\varepsilon>0$ such that $\gamma(0,\varepsilon)\subset A$.

Notice that closed $\AS$-subsets of $\P_\R^N$ are exactly the closed sets of a noetherian topology.

For more on arc-symmetric and $\AS$ sets we refer the reader to \cite{KP07}.


One important property of the $\AS$ sets that we rely on throughout this paper is that it admits an additive invariant richer than the Euler characteristic with compact support, namely the virtual Poincaré polynomial presented later in Section \ref{sec:GR-AS}. This is in contrast to the semialgebraic sets, for which, by a theorem of R. Quarez \cite{Qua01}, every additive homeomorphism invariant of semialgebraic sets factorises through the Euler characteristic with compact support.

Let $E,B,F$ be three $\AS$-sets. We say that $p:E\rightarrow B$ is an $\AS$ piecewise trivial fibration with fiber $F$ if there exists a finite partition $B=\sqcup B_i$ into $\AS$-sets such that $p^{-1}(B_i)\simeq B_i\times F$ where $\simeq$ means bijection with $\AS$-graph.

Notice that, thanks to the noetherianity of the $\AS$-topology, if $p:E\rightarrow B$ is locally trivial with fiber $F$ for the $\AS$-topology\footnote{i.e. for every $x\in B$ there is $U\subset B$ an $\AS$-open subset containing $x$ such that $p^{-1}(U)\simeq U\times F$.}, then it is an $\AS$ piecewise trivial fibration.

\section{Real motivic integration}\label{sec:motivic}
This section is devoted to the construction of a real motivic measure. Notice that a first step in this direction was done by R. Quarez in \cite{Qua01} using the Euler characteristic with compact support for semialgebraic sets. The measure constructed in this section takes advantage of the $\AS$-machinery in order to use the virtual Poincaré polynomial which is a real analogue of the Hodge--Deligne polynomial in real algebraic geometry. This additive invariant is richer than the Euler characteristic since it encodes, for example, the dimension.

Since real algebraic geometry is quite different from complex algebraic geometry as there is, for example, no Nullstellensatz or Chevalley's theorem, the classical construction of the motivic measure does not work as it is in this real context and it is necessary to carefully handle these differences.

\subsection{Real arcs and jets}
We follow the notations of \cite[\S2.4]{Cam16}.

\begin{defn}
The space of real analytic arcs on $\R^N$ is defined as $$\L(\R^N)=\left\{\gamma:(\R,0)\rightarrow\R^N,\,\gamma\text{ real analytic}\right\}$$
\end{defn}

\begin{defn}
For $n\in\N$, the space of $n$-jets on $\R^N$ is defined as $$\L_n(\R^N)=\quotient{\L(\R^N)}{\sim_n}$$ where $\gamma_1\sim_n\gamma_2\Leftrightarrow\gamma_1\equiv\gamma_2\mod t^{n+1}$.
\end{defn}

\begin{notation}
For $m>n$, we consider the following \emph{truncation maps}: $$\pi_n:\L(\R^N)\rightarrow\L_n(\R^N)$$ and $$\pi^m_n:\L_m(\R^N)\rightarrow\L_n(\R^N).$$
\end{notation}

\begin{defn}
For an algebraic set $X\subset\R^N$, we define the space of real analytic arcs on $X$ as $$\L(X)=\left\{\gamma\in\L(\R^N),\,\forall f\in I(X),\,f(\gamma(t))=0\right\}$$ and the space of $n$-jets on $X$ as $$\L_n(X)=\left\{\gamma\in\L_n(\R^N),\,\forall f\in I(X),\,f(\gamma(t))\equiv0\mod t^{n+1}\right\}.$$
The truncation maps induce the maps $$\pi_n:\L(X)\rightarrow\L_n(X)$$ and $$\pi^m_n:\L_m(X)\rightarrow\L_n(X).$$
\end{defn}

\begin{rem}
Notice that $\L_n(X)$ is a real algebraic variety. Indeed, let $f\in I(X)$ and $a_0,\ldots,a_n\in\R^N$, then we have the following expansion $$f(a_0+a_1t+\cdots+a_nt^n)=P_0^f(a_0,\ldots,a_n)+P_1^f(a_0,\ldots,a_n)t+\cdots+P_n^f(a_0,\ldots,a_n)t^n+\cdots$$ where the coefficients $P_i^f$ are polynomials. Hence $\L_n(X)$ is the algebraic subset of $\R^{N(n+1)}$ defined as the zero-set of the polynomials $P_i^f$ for $f\in I(X)$ and $i\in\{0,\ldots,n\}$. \\
In the same way, we may think of $\L(X)$ as an infinite-dimensional algebraic variety.
\end{rem}

\begin{rem}
When $X$ is non-singular the following equality holds: $$\L_n(X)=\pi_n(\L(X))$$
Indeed, using Hensel's lemma, we may always lift an $n$-jet to a formal arc on $X$ and then use Artin approximation theorem to find an analytic arc whose expansion coincides up to the degree $n+1$. However this equality doesn't hold anymore when $X$ is singular as highlighted in \cite[Example 2.30]{Cam16}. Hence it is necessary to distinguish the space $\L_n(X)$ of $n$-jets on $X$ and the space $\pi_n(\L(X))\subset\L_n(X)$ of $n$-jets on $X$ which may be lifted to real analytic arcs on $X$. We have the following exact statement.
\end{rem}

\begin{prop}[{\cite[Proposition 2.31]{Cam16}}]
Let $X$ be an algebraic subset of $\R^N$. Then the following are equivalent :
\begin{enumerate}[label=(\roman*),nosep]
\item $X$ is non-singular.
\item $\forall n\in\N$, $\pi_n:\L(X)\rightarrow\L_n(X)$ is surjective.
\item $\forall n\in\N$, $\pi^{n+1}_n:\L_{n+1}(X)\rightarrow\L_n(X)$ is surjective.
\end{enumerate}
\end{prop}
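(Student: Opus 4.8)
The plan is to prove the three implications in a cycle, (i)$\Rightarrow$(ii)$\Rightarrow$(iii)$\Rightarrow$(i), exploiting the fact that for a non-singular point the equations of $X$ can be taken in a particularly clean form, and for the converse we reformulate the failure of surjectivity in terms of the Jacobian matrix along a jet.

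\emph{From (i) to (ii).} Assume $X$ is non-singular. Fix $n\in\N$ and a jet $\gamma_n\in\L_n(X)$, with center $x=\gamma_n(0)$. By the definition of non-singularity there are $g_1,\dots,g_{N-d}\in I(X)$ and an Euclidean neighborhood $U$ of $x$ with $U\cap X=U\cap V(g_1,\dots,g_{N-d})$ and $\rank\bigl(\pd{g_i}{x_j}(x)\bigr)=N-d$. The key point is that near $x$ the scheme $V(g_1,\dots,g_{N-d})$ is a complete intersection and smooth of dimension $d$, so $(I(X))$ and $(g_1,\dots,g_{N-d})$ generate the same ideal after localizing at the maximal ideal of $x$ (here the radicality is automatic because the Jacobian has maximal rank); in any case it suffices to lift $\gamma_n$ as an arc on $V(g_1,\dots,g_{N-d})$. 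Write $\gamma_n(t)=x+a_1t+\dots+a_nt^n$; by hypothesis $g_i(\gamma_n(t))\equiv 0 \bmod t^{n+1}$. Because the Jacobian of $(g_1,\dots,g_{N-d})$ is of maximal rank at $x$, Hensel's lemma (in the henselian ring $\R\llbracket t\rrbracket$, or rather its completion) lets us correct $\gamma_n$ to a \emph{formal} arc $\hat\gamma(t)\in\R\llbracket t\rrbracket^N$ with $g_i(\hat\gamma(t))=0$ for all $i$ and $\hat\gamma\equiv\gamma_n\bmod t^{n+1}$. Finally Artin approximation replaces $\hat\gamma$ by a genuine real analytic arc $\gamma\in\L(\R^N)$ with $g_i(\gamma(t))=0$ and $\gamma\equiv\gamma_n\bmod t^{n+1}$; then $\gamma\in\L(X)$ (its image lies in $U\cap V(\vec g)=U\cap X$, and being analytic with image in $X$ near $0$ forces $f(\gamma(t))=0$ for every $f\in I(X)$) and $\pi_n(\gamma)=\gamma_n$.

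\emph{From (ii) to (iii).} This is formal: if $\pi_{n+1}:\L(X)\to\L_{n+1}(X)$ is surjective, then since $\pi_n=\pi^{n+1}_n\circ\pi_{n+1}$ is also surjective (it is, being an instance of (ii)), we get that $\pi^{n+1}_n$ is surjective on the image of $\pi_{n+1}$, which is all of $\L_{n+1}(X)$; hence $\pi^{n+1}_n:\L_{n+1}(X)\to\L_n(X)$ is onto.

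\emph{From (iii) to (i), the contrapositive.} Suppose $X$ is singular; I must produce some $n$ with $\pi^{n+1}_n:\L_{n+1}(X)\to\L_n(X)$ not surjective. Pick $x\in X_\sing$. The obstruction to lifting an $n$-jet $\gamma_n$ by one more order is linear: writing $\gamma_{n+1}(t)=\gamma_n(t)+a_{n+1}t^{n+1}$ and expanding, for each $f\in I(X)$ one has $f(\gamma_{n+1}(t))\equiv f(\gamma_n(t)) + \bigl(\sum_j \pd{f}{x_j}(x)\, (a_{n+1})_j\bigr)t^{n+1}\bmod t^{n+2}$, so $\gamma_n$ lifts iff the linear system $\sum_j \pd{f}{x_j}(x)(a_{n+1})_j = -(\text{coefficient of }t^{n+1}\text{ in }f(\gamma_n(t)))$, $f\in I(X)$, is solvable. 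When $x$ is singular the Jacobian matrix $\bigl(\pd{f_k}{x_j}(x)\bigr)$ (for generators $f_1,\dots,f_s$ of $I(X)$) has rank strictly less than $N-\dim_x X$, so it does not surject onto $\R^s$ in the relevant sense, and one can exhibit an $n$-jet (already of bounded order — one checks $n=1$ suffices at a singular point, after possibly increasing to account for the $P_i^f$) whose order-$(n+1)$ obstruction lies outside the image of that Jacobian. Concretely, one takes a suitable arc along a curve through $x$ that is forced by the equations to a higher order than the linear data can realize. I expect this last step — choosing the witness jet and checking its obstruction is genuinely non-zero in the cokernel — to be the main obstacle, and it is precisely the content of \cite[Example 2.30]{Cam16} and \cite[Proposition 2.31]{Cam16}; the cleanest route is to cite that proposition directly, since the statement here is quoted verbatim from it, and merely indicate the mechanism above.

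Overall, the heart of the matter is the equivalence of smoothness with the lifting property, and the one direction requiring real work is deducing singularity$\Rightarrow$(iii) fails, for which the finite bound on the order of the obstructing jet (so that one genuinely stays within $\L_{n+1}(X)\to\L_n(X)$ for a \emph{fixed} $n$, rather than merely $\L(X)\to\L_n(X)$) is the delicate point.
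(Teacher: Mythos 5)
The paper itself offers no proof of this proposition: it is imported verbatim from \cite[Proposition 2.31]{Cam16}, and the only in-text justification is the Remark that follows, which sketches (i)$\Rightarrow$(ii) exactly as you do (correct the jet to a formal solution by Hensel's lemma, then invoke Artin approximation). Your implication (i)$\Rightarrow$(ii) is therefore fine and matches the paper's sketch, including the point that an analytic arc lying in $V(g_1,\ldots,g_{N-d})\cap U=X\cap U$ near $t=0$ kills every $f\in I(X)$ identically by analytic continuation. The implication (ii)$\Rightarrow$(iii) is correct and immediate from $\pi_n=\pi^{n+1}_n\circ\pi_{n+1}$.

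The genuine gap is in (iii)$\Rightarrow$(i). Your linearized obstruction analysis is set up correctly (a jet $\gamma_n$ lifts one level iff the vector of $t^{n+1}$-coefficients of $f_k(\gamma_n(t))$ lies in the image of the Jacobian $\bigl(\pd{f_k}{x_j}(x)\bigr)$, which is consistent with Proposition \ref{prop:dimfibers}: the nonempty fibers of $\pi^{n+1}_n$ are copies of $T^{\mathrm{Zar}}_{\gamma(0)}X$), but rank deficiency of that Jacobian does not by itself produce a witness: you must exhibit a jet in $\L_n(X)$ whose obstruction vector actually falls outside the image, and nothing in your argument guarantees that the realizable obstruction vectors are not all contained in that image. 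Your parenthetical claim that $n=1$ suffices is unjustified and not true in general, and ending by citing \cite[Proposition 2.31]{Cam16} is circular, since that is precisely the statement being proved. A workable route, using only material already in the paper, is by contradiction: if every $\pi^{n+1}_n$ were surjective then inductively (plus Artin) $\L_n(X)=\pi_n(\L(X))$, so $\dim\L_n(X)\le(n+1)d$ by Proposition \ref{item:truncfibers}; on the other hand, over an $n$-jet centered at a singular point the fibers of the truncations are Zariski tangent spaces of dimension $>d$, which forces $\dim\L_n(X)$ to exceed $(n+1)d$ for $n$ large. That dimension count is the missing ingredient, and it is the non-formal part of the cited proof.
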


\begin{prop}[{\cite[Proposition 2.33]{Cam16}}]\label{prop:dimfibers}
Let $X$ be a $d$-dimensional algebraic subset of $\R^N$. Then
\begin{enumerate}[label=(\arabic*),ref=\ref{prop:dimfibers}.(\arabic*),nosep]
\item\label{item:truncfibers} For $m\ge n$, the dimensions of the fibers of ${\pi^{m}_n}_{|\pi_{m}(\L(X))}:\pi_m\left(\L(X)\right)\rightarrow\pi_n\left(\L(X)\right)$ are smaller than or equal to $(m-n)d$.
\item The fiber $\left(\pi^{n+1}_n\right)^{-1}(\gamma)$ of $\pi^{n+1}_n:\L_{n+1}(X)\rightarrow\L_n(X)$ is either empty or isomorphic to $T^{\mathrm{Zar}}_{\gamma(0)}X$.
\end{enumerate}
\end{prop}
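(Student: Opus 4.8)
The plan is to treat the two statements separately, reducing both to local calculations at a jet $\gamma\in\L_n(X)$ (resp.\ $\pi_n(\L(X))$) using the polynomial description of the jet spaces recalled in the previous remark. For (2), I would fix $\gamma=\gamma_0+\gamma_1 t+\cdots+\gamma_n t^n\in\L_n(X)$ and analyze the fiber $(\pi^{n+1}_n)^{-1}(\gamma)$, which consists of those $\gamma_{n+1}\in\R^N$ such that the truncated arc $\gamma+\gamma_{n+1}t^{n+1}$ still satisfies $P^f_i\equiv 0$ for all $f\in I(X)$, $i\le n+1$. Since $\gamma$ is already an $n$-jet on $X$, the conditions for $i\le n$ are automatically satisfied, and the only new constraint comes from the coefficient $P^f_{n+1}$. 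Expanding $f(\gamma(t)+\gamma_{n+1}t^{n+1})$ and extracting the coefficient of $t^{n+1}$, one finds $P^f_{n+1}(\gamma_0,\dots,\gamma_n,\gamma_{n+1}) = P^f_{n+1}(\gamma_0,\dots,\gamma_n,0) + \nabla f(\gamma_0)\cdot \gamma_{n+1}$, because the only term in the Taylor expansion of $f$ that can produce $t^{n+1}$ while involving $\gamma_{n+1}$ is the linear term $\nabla f(\gamma_0)\cdot(\gamma_{n+1}t^{n+1})$. Hence the fiber, if nonempty, is an affine subspace of $\R^N$ which is a translate of $\{v\in\R^N : \nabla f(\gamma_0)\cdot v = 0 \text{ for all } f\in I(X)\} = T^{\mathrm{Zar}}_{\gamma(0)}X$, the Zariski tangent space. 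This gives (2).

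For (1), the idea is to fibre the map ${\pi^{m}_n}_{|\pi_{m}(\L(X))}$ through the intermediate truncations $\pi^{k+1}_k$ for $n\le k < m$ and bound the dimension of each fibre by $d$, then compose. The subtlety is that we are working over $\pi_m(\L(X))$ and $\pi_n(\L(X))$, the images of the genuine analytic arc space, not over the full jet spaces $\L_m(X)$, $\L_n(X)$; but this is precisely what makes the bound work, since a jet in $\pi_{k+1}(\L(X))$ lifts to an analytic (a fortiori formal) arc, whose reduction modulo $t^{k+1}$ lies in $\pi_k(\L(X))$, and the fibre of $\pi^{k+1}_k$ over such a point, intersected with $\pi_{k+1}(\L(X))$, is contained in the fibre of the full map $\pi^{k+1}_k\colon\L_{k+1}(X)\to\L_k(X)$, which by part (2) is either empty or a translate of $T^{\mathrm{Zar}}_{\gamma(0)}X$. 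Now $\dim T^{\mathrm{Zar}}_{\gamma(0)}X$ can exceed $d$ at singular points, so a naive application of (2) is not enough; the point is that $\gamma(0)$ ranges over the image of the center map, and one uses that for an arc $\gamma$ genuinely lifting, the set of possible $(k+1)$-st coefficients is cut out further. The cleanest route is: over the locus where $\gamma(0)\in\Reg(X)$ the fibre has dimension exactly $d$, and over $\gamma(0)\in X_{\sing}$ one stratifies $X_{\sing}$ (which has dimension $<d$) and runs an induction on dimension, using that arcs with center in a proper algebraic subset form a subspace whose truncations have controlled dimension. Composing the $(m-n)$ one-step bounds yields the estimate $(m-n)d$ for the fibre dimension.

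The main obstacle is the singular locus in part (1): over $X_{\sing}$ the Zariski tangent space jumps, so the one-step fibre bound of $d$ from part (2) fails pointwise, and one must exploit that we work with $\pi_m(\L(X))$ rather than $\L_m(X)$ to recover it — essentially, an analytic arc through a singular point is constrained beyond its $\infty$-jet satisfying the defining equations formally. I would handle this by an induction on $\dim X$: stratify $X=\Reg(X)\sqcup X_{\sing}$ with $\dim X_{\sing}<d$, note arcs not entirely contained in $X_{\sing}$ have center in $\Reg(X)$ for a dense-enough set of jets and there the bound $d$ holds by the regular case (where $\pi_n$ is surjective and fibres are affine spaces of dimension $d$ by the previous Proposition and part (2)), while arcs with center in $X_{\sing}$ and lifting to arcs in $X$ have small-dimensional truncations by the inductive hypothesis applied to an algebraic set containing their images. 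Since this is quoted from \cite[Proposition 2.33]{Cam16}, I would in practice cite that reference for the detailed bookkeeping of the stratification argument and only reproduce the clean computation of part (2) in full.
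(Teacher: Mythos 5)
The paper itself gives no proof of this proposition: it is quoted verbatim from \cite[Proposition 2.33]{Cam16}, so there is nothing internal to compare against. Your computation for part (2) is correct and complete in outline: the equations indexed by $i\le n$ do not involve the new coefficient, and $P^f_{n+1}$ is affine in $a_{n+1}$ with linear part $\nabla f(a_0)\cdot a_{n+1}$, so the fiber is empty or a translate of $T^{\mathrm{Zar}}_{\gamma(0)}X$.

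Your plan for part (1), however, has a genuine gap at exactly the point you identify as the main obstacle. You propose to handle the singular locus by induction on dimension, asserting that ``arcs with center in $X_\sing$ and lifting to arcs in $X$ have small-dimensional truncations by the inductive hypothesis applied to an algebraic set containing their images.'' This conflates arcs \emph{centered at} a singular point with arcs \emph{contained in} $X_\sing$: an arc through a singular point that immediately escapes into $\Reg(X)$ has Zariski-dense image in a $d$-dimensional component, so no proper algebraic subset contains it and the inductive hypothesis does not apply. These are precisely the problematic fibers (over a jet $\gamma_n$ with $\gamma_n(0)\in X_\sing$, \emph{every} element of the fiber is centered at a singular point), and part (2) only bounds them by $\dim T^{\mathrm{Zar}}_{\gamma(0)}X$, which can exceed $d$. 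The correct mechanism, used elsewhere in this paper, is to stratify by the order of contact with the singular locus: for $\gamma\in\L^{(e)}(X)$ and $n\ge e$ the one-step fibers are genuine $d$-dimensional affine spaces by the Hensel/implicit-function argument of Proposition \ref{prop:ptf}, while the jets coming from arcs with $e>n$ (or from $\L(X_\sing)$) lie in $\pi_n\bigl(\pi_{n'}^{-1}(\L_{n'}(X_\sing))\bigr)$-type sets whose dimension is controlled via Greenberg's Theorem \ref{thm:greenberg} together with induction on $\dim X$, as in Lemma \ref{lem:dimLS}. Without this stratification by contact order your one-step bound of $d$ is unproved, and the composition argument (which is otherwise fine) has nothing to compose.
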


\begin{thm}[A motivic corollary of Greenberg Theorem]\label{thm:greenberg}
Let $X\subset\R^N$ an algebraic subset. There exists $c\in\Np$ (depending only on $I(X)$) such that $$\forall n\in\N,\,\pi_n(\L(X))=\pi^{cn}_n(\L_{cn}(X))$$
\end{thm}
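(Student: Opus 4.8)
The plan is to deduce this from the classical Greenberg theorem applied to the defining ideal $I(X)$, combined with Artin approximation to pass between formal and analytic arcs. Recall Greenberg's theorem: for a system of polynomial equations $f_1=\cdots=f_s=0$ with $f_i\in\R[x_1,\ldots,x_N]$, there is a constant $c\in\Np$ (depending only on the $f_i$, hence only on $I(X)$) such that for every $n\in\N$, if $a\in\R^{N(cn+1)}$ is a $(cn)$-jet satisfying $f_i(a)\equiv 0 \bmod t^{cn+1}$ for all $i$, then there is a \emph{formal} power series solution $\hat\gamma\in\R\llbracket t\rrbracket^N$ of $f_1=\cdots=f_s=0$ with $\hat\gamma\equiv a\bmod t^{n+1}$. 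In our notation this says precisely $\pi^{cn}_n(\L_{cn}(X))\subseteq\pi_n(\widehat{\L}(X))$, where $\widehat{\L}(X)$ denotes the space of formal arcs on $X$.

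The first step is to reduce to the formal setting and to record the reverse inclusion. One always has $\pi_n(\L(X))\subseteq\pi^{cn}_n(\L_{cn}(X))$: if $\gamma\in\L(X)$ then $\pi_{cn}(\gamma)\in\L_{cn}(X)$ by definition of $\L_{cn}(X)$, and truncating further to level $n$ gives $\pi_n(\gamma)=\pi^{cn}_n(\pi_{cn}(\gamma))\in\pi^{cn}_n(\L_{cn}(X))$. So the content is the opposite inclusion $\pi^{cn}_n(\L_{cn}(X))\subseteq\pi_n(\L(X))$.

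The second step is to prove this opposite inclusion. Let $\beta\in\pi^{cn}_n(\L_{cn}(X))$, so $\beta=\pi^{cn}_n(a)$ for some $a\in\L_{cn}(X)$, i.e. $f_i(a)\equiv 0\bmod t^{cn+1}$ for all $i$ (where $f_1,\ldots,f_s$ generate $I(X)$). By Greenberg's theorem there is a formal arc $\hat\gamma\in\R\llbracket t\rrbracket^N$ with $f_i(\hat\gamma)=0$ in $\R\llbracket t\rrbracket$ for all $i$ and $\hat\gamma\equiv a\bmod t^{n+1}$, hence $\pi_n(\hat\gamma)=\beta$. Now apply Artin approximation theorem to the system $f_1=\cdots=f_s=0$: there exists a \emph{convergent} (real analytic) solution $\gamma\in\L(\R^N)$ with $\gamma\equiv\hat\gamma\bmod t^{n+1}$; since the $f_i$ vanish identically on $\gamma$, and these generate $I(X)$, we get $\gamma\in\L(X)$, and $\pi_n(\gamma)=\pi_n(\hat\gamma)=\beta$. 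Thus $\beta\in\pi_n(\L(X))$, completing the equality. The dependence of $c$ only on $I(X)$ (equivalently, only on a chosen finite generating set, and one checks independence of the choice since any two generating sets generate the same ideal, or simply absorb it into the statement) is inherited directly from Greenberg's theorem.

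The only genuine subtlety — and the point I would expect to spell out carefully — is the precise form of Greenberg's theorem over $\R$: the original statement is for complete (or excellent Henselian) local rings / the ring of convergent or formal power series, and one must make sure the uniform linear bound $n\mapsto cn$ (as opposed to some larger affine or non-linear function of $n$) is the one that is actually available, since that linearity is what makes the statement usable for the dimension estimates later. This is exactly the content of Greenberg's original paper for the $t$-adic setting, so the step is citation rather than proof, but it is the crux. Everything else is bookkeeping with truncation maps and an application of Artin approximation, the latter already invoked elsewhere in the paper to pass from formal to analytic arcs.
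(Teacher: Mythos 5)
Your proof is correct and follows essentially the same route as the paper: both deduce the statement as a direct corollary of Greenberg's theorem applied to a finite generating set of $I(X)$. The only differences in execution are that the paper applies Greenberg directly over the Henselian ring $\R\{t\}$ of convergent power series (so no separate Artin approximation step is needed to pass from formal to analytic arcs), and it carries out explicitly the derivation of the linear bound that you flag as the crux and leave to citation, taking $c=\max\left(l(\sigma+2),N\right)$ from Greenberg's constants $(N,l,\sigma)$ and checking $\left\lfloor\frac{cn+1}{l}\right\rfloor-\sigma>n$.
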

\begin{proof}
Assume that $I(X)=(f_1,\ldots,f_s)$.

By the main theorem of \cite{Gre66}, there exist $N\in\Np$, $l\in\Np$ and $\sigma\in\N$ (depending only on the ideal of $\R\{t\}[x_1,\ldots,x_N]$ generated by $f_i\in\R[x_1,\ldots,x_N]\subset\R\{t\}[x_1,\ldots,x_N]$) such that $\forall\nu\ge N,\,\forall\gamma\in\R\{t\}^N$, if $f_1(\gamma(t))\equiv\cdots\equiv f_s(\gamma(t))\equiv0\mod t^\nu$, then there exists $\eta\in\R\{t\}^N$ such that $\eta(t)\equiv\gamma(t)\mod t^{\left\lfloor\frac{\nu}{l}\right\rfloor-\sigma}$ and $f_1(\eta(t))=\cdots=f_s(\eta(t))=0$.

Fix $c=\max\left(l(\sigma+2),N\right)$. We are going to prove that $$\forall n\in\N,\,\pi_n(\L(X))=\pi^{cn}_n(\L_{cn}(X))$$

It is enough to prove that $\pi^{cn}_n(\L_{cn}(X))\subset\pi_n(\L(X))$ for $n\ge1$.

Let $n\ge1$. Let $\tilde\gamma\in\L_{cn}(X)$. Then there exists $\gamma\in\R\{t\}^N$ such that $\gamma(t)\equiv\tilde\gamma(t)\mod t^{cn+1}$ and $$f_1(\gamma(t))\equiv\cdots\equiv f_s(\gamma(t))\equiv0\mod t^{cn+1}$$

Notice that $cn+1\ge N$ so that there exists $\eta\in\R\{t\}^N$ such that $\eta(t)\equiv\gamma(t)\mod t^{\left\lfloor\frac{cn+1}{l}\right\rfloor-\sigma}$ and $f_1(\eta(t))=\cdots=f_s(\eta(t))=0$.

Since $$\left\lfloor\frac{cn+1}{l}\right\rfloor-\sigma>n$$ we have that $\pi^{cn}_n(\tilde\gamma)=\pi_n(\eta)\in\pi_n(\L(X))$.
\end{proof}

\begin{rem}
By Tarski--Seidenberg theorem, $\pi_n(\L(X))=\pi^{cn}_n(\L_{cn}(X))$ is semialgebraic as the projection of an algebraic set. However, $\pi_n(\L(X))$ may not be $\AS$ (and thus not Zariski-constructible) as shown in \cite[Example 2.32]{Cam16}.

This is a major difference with the complex case where $\pi_n(\L(X))$ is Zariski-constructible by Chevalley theorem as the projection of a complex algebraic variety.
\end{rem}

\begin{defn}
Let $X$ be an algebraic subset of $\R^N$. We define the ideal $H_X$ of $\R[x_1,\ldots,x_N]$ by $$H_X=\sum_{f_1,\ldots,f_{N-d}\in I(X)}\Delta(f_1,\ldots,f_{N-d})((f_1,\ldots,f_{N-d}):I(X))$$ where \begin{itemize}[nosep]
\item $d=\dim X$
\item $\Delta(f_1,\ldots,f_{N-d})$ is the ideal generated by the $N-d$ minors of the Jacobian matrix $$\left(\frac{\partial f_i}{\partial x_j}\right)_{\substack{i=1,\ldots,N-d\\j=1,\ldots,N}}$$
\item $((f_1,\ldots,f_{N-d}):I(X))=\left\{g\in\R[x_1,\ldots,x_N],\,gI(X)\subset(f_1,\ldots,f_{N-d})\right\}$ is the ideal quotient of the ideal $(f_1,\ldots,f_{N-d})$ by the ideal $I(X)$
\end{itemize}
\end{defn}

\begin{rem}
By \cite[Lemma 4.1]{Cam16}, $V(H_X)=X_\sing$.
\end{rem}

\begin{defn}
Let $X\subset\R^N$ be an algebraic subset and $e\in\N$. We set $$\L^{(e)}(X)=\left\{\gamma\in\L(X),\,\exists h\in H_X,\,h(\gamma(t))\nequiv0\mod t^{e+1}\right\}$$
\end{defn}

\begin{rem}\label{rem:singarcs}
From now on, we set $$\L(X_\sing)=\left\{\gamma\in\L(\R^N),\,\forall h\in H_X,\,h(\gamma(t))=0\right\}$$ and $$\L_n(X_\sing)=\left\{\gamma\in\L_n(\R^N),\,\forall h\in H_X,\,h(\gamma(t))\equiv0\mod t^{n+1}\right\}.$$
Notice that $$\left\{\gamma\in\L(\R^N),\,\forall h\in H_X,\,h(\gamma(t))=0\right\}=\left\{\gamma\in\L(\R^N),\,\forall f\in I(X_\sing),\,f(\gamma(t))=0\right\}$$ but be careful that
\begin{align*}
&\left\{\gamma\in\L_n(\R^N),\,\forall h\in H_X,\,h(\gamma(t))\equiv0\mod t^{n+1}\right\}\\
&\quad\quad\quad\quad\neq\left\{\gamma\in\L_n(\R^N),\,\forall f\in I(X_\sing),\,f(\gamma(t))\equiv0\mod t^{n+1}\right\}
\end{align*}
Notice also that since the proof of Greenberg Theorem \ref{thm:greenberg} is algebraic, it holds for $\L(X_\sing)$ (just use the ideal $H_X$ in the proof).
\end{rem}

\begin{rem}
$\L(X)=\left(\displaystyle\bigcup_{e\in\N}\L^{(e)}(X)\right)\bigsqcup\L(X_\sing)$
\end{rem}

The following proposition is a real version of \cite[Lemma 4.1]{DL99}. Its proof is quite similar to the one of \cite[Lemma 4.5]{Cam16}.
\begin{prop}\label{prop:ptf}
Let $X$ be a $d$-dimensional algebraic subset of $\R^N$ and $e\in\N$. Then, for $n\ge e$,
\begin{enumerate}[nosep,label=(\roman*)]
\item $\pi_n\left(\L^{(e)}(X)\right)\in\AS$
\item $\pi^{n+1}_n:\pi_{n+1}\left(\L^{(e)}(X)\right)\rightarrow\pi_{n}\left(\L^{(e)}(X)\right)$ is an $\AS$ piecewise trivial fibration with fiber $\R^d$.
\end{enumerate}
\end{prop}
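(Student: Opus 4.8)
The plan is to work locally on $X$ near a non-singular point and transfer the analysis of arcs to the smooth setting, where the theory is well understood. The key observation is that if $\gamma \in \L^{(e)}(X)$ then, by the very definition of $H_X$ and the fact that $V(H_X) = X_\sing$, the arc $\gamma$ is not entirely contained in $X_\sing$; more precisely, the condition $h(\gamma(t)) \nequiv 0 \bmod t^{e+1}$ for some $h \in H_X$ says that, writing $H_X$ in terms of the generators $\Delta(f_1,\ldots,f_{N-d})\big((f_1,\ldots,f_{N-d}):I(X)\big)$, there is a choice of $f_1,\ldots,f_{N-d}\in I(X)$ for which the relevant $(N-d)$-minor of the Jacobian, multiplied by an element of the ideal quotient, does not vanish to order $>e$ along $\gamma$. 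So first I would stratify $\L^{(e)}(X)$ according to which minor realizes this, and according to the order of vanishing of that minor (which is $\le e$, hence finitely many cases); on each such piece the arc stays, for $t$ small, in the locus where $X$ is cut out by $f_1,\ldots,f_{N-d}$ with maximal-rank Jacobian, i.e.\ the arc lies on a local Nash submanifold $X'$ of dimension $d$.

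Second, on each stratum I would invoke the smooth case. For a non-singular $d$-dimensional algebraic (or Nash) set $X'$, Hensel's lemma gives $\L_n(X') = \pi_n(\L(X'))$ (as recalled in the excerpt) and $\pi^{n+1}_n : \L_{n+1}(X') \to \L_n(X')$ is a Zariski-locally trivial (in fact, after trivializing via the implicit function theorem, genuinely locally trivial) affine bundle with fiber $T_{\gamma(0)}^{\mathrm{Zar}} X' \cong \R^d$ by Proposition~\ref{prop:dimfibers}.(2); and truncated jet spaces of a smooth variety are Zariski-constructible, hence here $\AS$. The constraint $\ord_\gamma(\text{minor}) \le e$ is an open condition on jets of order $\ge e$, hence $\AS$-open, so it cuts out an $\AS$-subset of $\L_n(X')$, and the bundle structure restricts to it because the base condition only involves the $\le e$-jet while $n \ge e$. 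Patching the finitely many strata together, and using that a finite disjoint union of $\AS$ piecewise trivial fibrations with fiber $\R^d$ is again one, yields both (i) that $\pi_n(\L^{(e)}(X))$ is a finite union of $\AS$-sets, hence $\AS$, and (ii) the fibration statement.

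The main obstacle I anticipate is the bookkeeping around the ideal quotient $\big((f_1,\ldots,f_{N-d}):I(X)\big)$ and the passage between "$\gamma$ satisfies $f=0$ for all $f\in I(X)$" and "$\gamma$ satisfies $f_1=\cdots=f_{N-d}=0$" — these are only equivalent away from the zero locus of the chosen element $g$ of the ideal quotient, and one must check that the non-vanishing $h(\gamma) = g \cdot (\text{minor})(\gamma) \nequiv 0 \bmod t^{e+1}$ indeed forces $g(\gamma) \nequiv 0 \bmod t^{\cdot}$, so that $\gamma$ lands in the good locus $\{g \ne 0\} \cap V(f_1,\ldots,f_{N-d})$ where this set coincides with $X$ and is a smooth $d$-manifold. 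A second delicate point is lifting: one needs $\pi_n(\L^{(e)}(X)) = \pi_n(\L(X)) \cap (\text{order condition})$ to actually be $\pi_n$ of arcs on $X$ and not merely jets on $X$, but this is exactly where the smooth-case lifting via Hensel/Artin applies on each stratum, since on the good locus the jet is a jet on the non-singular variety $V(f_1,\ldots,f_{N-d})$ localized at $g$.

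Concretely the steps are: (1) decompose $\L^{(e)}(X) = \bigsqcup$ over finitely many combinatorial data (choice of generators, which minor, order of vanishing $\le e$ of $h$); (2) on each piece, identify the relevant arcs with arcs on a local smooth Nash submanifold $X'$ avoiding $X'_{\sing}=\varnothing$, using that the non-vanishing condition localizes us in the smooth locus; (3) apply the smooth case to get $\pi_n(\L(X')) = \L_n(X')$, $\AS$-ness of truncations, and local triviality of $\pi^{n+1}_n$ with fiber $\R^d$, then restrict along the $\AS$-open order condition (which only constrains the $e$-jet and hence does not interfere with the fiber direction since $n \ge e$); (4) reassemble, using closure of $\AS$ under finite unions and the remark in the excerpt that a finite disjoint union of $\AS$ piecewise trivial fibrations with common fiber is an $\AS$ piecewise trivial fibration, to conclude (i) and (ii).
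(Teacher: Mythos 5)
Your opening decomposition of $\L^{(e)}(X)$ by the choice of generators $f_1,\ldots,f_{N-d}$, the minor $\delta$ realizing the non-vanishing, and its order of vanishing $e'\le e$ along the arc is exactly the decomposition used in the paper (the sets $A_{\mathbf f,h,\delta,e'}\cap\L(X)$). The gap is in your steps (2)--(3): you cannot reduce to ``the smooth case.'' An arc $\gamma\in\L^{(e)}(X)$ is only required to satisfy $h(\gamma(t))\nequiv 0\bmod t^{e+1}$ for some $h\in H_X$; its center $\gamma(0)$ may perfectly well be a singular point of $X$, and on every stratum with $e'>0$ the chosen minor $\delta$ vanishes at $\gamma(0)$. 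So the arc does not lie on a local Nash submanifold of $\R^N$ through $\gamma(0)$, Hensel's lemma at $t=0$ for a non-singular variety does not apply, and the fiber of $\pi^{n+1}_n$ is not given by the implicit function theorem. The paper handles this by an explicit computation: after reordering coordinates it produces a matrix $P$ with $P\Delta=(\delta I_{N-d},W)$ and $W(\gamma(t))\equiv0\bmod t^{e'}$, writes the equation $\mathbf f(\gamma(t)+t^{n+1}\nu(t))=0$ for a lift, and multiplies by $t^{-n-1-e'}P$; only after this division by $t^{e'}$ do Hensel and Artin approximation show that the fiber is a $d$-dimensional affine subspace. That division is the entire content of the singular case (it is where the hypothesis $n\ge e\ge e'$ is used) and it is absent from your argument.

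The second, equally serious, gap is part (i). You justify $\pi_n(\L^{(e)}(X))\in\AS$ by invoking constructibility of jet spaces of smooth varieties, but the set in question is $\pi_n$ of genuine arcs on the singular variety $X$, and the paper explicitly warns (after Theorem \ref{thm:greenberg}) that $\pi_n(\L(X))$ need not be $\AS$ because Chevalley's theorem fails over $\R$. The actual argument applies Greenberg's theorem to write $\pi_n(A)$ as the image of an $\AS$-set under $\pi^{cn}_n$ and then uses the real-specific fact \cite[Theorem 4.3]{Par04} that the image of an $\AS$-set under a map whose fibers have odd Euler characteristic with compact support is again $\AS$ --- the fibers here being the affine spaces produced by the computation above. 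Without this substitute for Chevalley's theorem your proof of (i) does not go through. Your final patching step and the appeal to a lemma on $\AS$ families of affine fibers are fine once these two points are supplied.
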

\begin{proof}
By \cite[Lemma 4.7]{Cam16}, $\L^{(e)}(X)$ is covered by finitely many sets of the form $$A_{\mathbf f,h,\delta}=\left\{\gamma\in\L(\R^N),\,(h\delta)(\gamma(t))\nequiv0\mod t^{e+1}\right\}$$ where\ \ $\mathbf f=(f_1,\ldots,f_{N-d})\in I(X)^{N-d}$, $\delta$ is a $N-d$ minor of the Jacobian matrix $\left(\frac{\partial f_i}{\partial x_j}\right)_{\substack{i=1,\ldots,N-d\\j=1,\ldots,N}}$ and $h\in((f_1,\ldots,f_{N-d}):I(X))$.
Moreover, $$\L(X)\cap A_{\mathbf f,h,\delta}=\left\{\gamma\in\L(\R^N),\,f_1(\gamma(t))=\cdots=f_{N-d}(\gamma(t))=0,\,(h\delta)(\gamma(t))\nequiv0\mod t^{e+1}\right\},$$
so that $\displaystyle\L^{(e)}(X)=\L(X)\cap\bigcup_{\mathrm{finite}}A_{\mathbf f,h,\delta}=\bigcup_{\mathrm{finite}}\left(\L(X)\cap A_{\mathbf f,h,\delta}\right)$. \\
For $e'\le e$, we set $$A_{\mathbf f,h,\delta,e'}=\left\{\gamma\in A_{\mathbf f,h,\delta},\,\ord_t\delta(\gamma(t))=e',\,\ord_t\delta'(\gamma(t))\ge e',\,\text{for all $N-d$ minor $\delta'$ of $\left(\frac{\partial f_i}{\partial x_j}\right)$}\right\}$$
in order to refine the above cover: $\displaystyle\L^{(e)}(X)=\bigcup_{\mathrm{finite}}\left(\L(X)\cap A_{\mathbf f,h,\delta,e'}\right)$. \\

Fix some set $A=A_{\mathbf f,h,\delta,e'}\cap\L(X)$. Notice that if $\pi_n(\gamma)\in\pi_n(A)$ and if $\pi_{n+1}(\eta)\in\pi_{n+1}(\L^{(e)}(X))$ is in the preimage of $\pi_n(\gamma)$ by $\pi^{n+1}_n$ then $\pi_{n+1}(\eta)\in\pi_{n+1}(A)$. \\
Indeed, $\eta\in\L(X)$ so $f_1(\eta)=\cdots=f_{N-d}(\eta)=0$ and since $\pi_n(\eta)=\pi_n(\gamma)$, we also get that $(h\delta)(\eta(t))\nequiv0\mod t^{e+1}$, $\ord_t\delta(\eta(t))=e'$ and $\ord_t\delta'(\eta(t))\ge e'$. \\
Hence it is enough to prove the lemma for $\pi^{n+1}_n:\pi_{n+1}(A)\rightarrow\pi_n(A)$. \\

We are first going to prove that the fibers of  $\pi^{n+1}_n:\pi_{n+1}(A)\rightarrow\pi_n(A)$ are $d$-dimensional affine subspaces of $\R^N$. We can reorder the coordinates so that $\delta$ is the determinant of the first $N-d$ columns of $\Delta=\left(\frac{\partial f_i}{\partial x_j}\right)$. Then, similarly to the proof of \cite[Lemma 4.5]{Cam16}, there is a matrix $P$ such that $P\Delta=(\delta I_{N-d},W)$ and $\forall\gamma\in A,\,W(\gamma(t))\equiv0\mod t^{e'}$.

Fix $\gamma\in A$. The elements of the fiber of $\pi_{n+1}(A)\rightarrow\pi_n(A)$ over $\pi_{n}(\gamma)$, $\gamma\in A$, are exactly the $$\pi_{n+1}\big(\gamma(t)+t^{n+1}\nu(t)\big)$$ for $\nu\in\R\{t\}^d$ such that $\mathbf f(\gamma(t)+t^{n+1}\nu(t))=0$.

\noindent Using Taylor expansion, this last condition becomes $$\mathbf f(\gamma(t))+t^{n+1}\Delta(\gamma(t))\nu(t)+t^{2(n+1)}(\cdots)=0$$

\noindent Or equivalently, since $\gamma\in A$, $$t^{n+1}\Delta(\gamma(t))\nu(t)+t^{2(n+1)}(\cdots)=0$$

\noindent Multiplying by $t^{-n-1-e'}P$, we get
$$t^{-e'}\big(\delta(\gamma(t))I_{N-d},W(\gamma(t))\big)\nu(t)+t^{n+1-e'}(\cdots)=0$$

\noindent Notice that $\ord_t(\delta(\gamma(t))=e'$. Hence, by Hensel's lemma and Artin approximation theorem, the sought fiber is the set of $$\pi_{n+1}\big(\gamma(t)\big)+t^{n+1}\nu_0$$ with $\nu_0$ satisfying the linear system induced by
$$t^{-e'}\big(\delta(\gamma(t))I_{N-d},W(\gamma(t))\big)\nu_0\equiv0\mod t$$

Let $\nu_0$ be a solution, then its first $N-d$ coefficients are expressed as linear combinations of the last $d$. Therefore each fiber of $\pi^{n+1}_n:\pi_{n+1}(A)\rightarrow\pi_n(A)$ is a $d$-dimensional affine subspace of $\R^N$. \\

By Greenberg Theorem \ref{thm:greenberg}, there is a $c\in\N$ such that $\pi_{cn}(A)$ is an $\AS$-set. Then $\pi_n(A)$ is an $\AS$-set as the image of $\pi^{cn}_n:\pi_{cn}(A)\rightarrow\pi_n(A)$ whose fibers have odd Euler characteristic with compact support, see \cite[Theorem 4.3]{Par04}. \\

Finally, notice that $\pi_{n+1}(A)\subset\pi_{n}(A)\times\R^N$ and that $\pi^{n+1}_n:\pi_{n+1}(A)\rightarrow\pi_n(A)$ is simply the first projection. Then, according to the following lemma, $\pi^{n+1}_n:\pi_{n+1}(A)\rightarrow\pi_n(A)$ is an $\AS$ piecewise trivial fibration.
\end{proof}

\begin{lemma}
Let $A$ be an $\AS$-set, $\Omega\subset A\times\R^N$ be an $\AS$-set and $\pi:\Omega\rightarrow A$ be the natural projection. \\
Assume that for all $x\in A$, the fiber $\Omega_x=\pi^{-1}(x)$ is a $d$-dimensional affine subspace of $\R^N$. \\
Then $\pi:\Omega\rightarrow A$ is an $\AS$ piecewise trivial fibration.
\end{lemma}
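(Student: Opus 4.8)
The plan is to reduce the statement to the situation where the affine fibers all have a fixed "direction," and then trivialize by a linear-algebra bookkeeping that is carried out $\AS$-piecewise over $A$. First I would stratify the base: for a $d$-dimensional affine subspace $L\subset\R^N$, write $\vec L\subset\R^N$ for its direction (the unique $d$-dimensional linear subspace such that $L=p+\vec L$ for any $p\in L$). The assignment $x\mapsto\vec{\Omega_x}$ takes values in the Grassmannian $G(d,N)$, which is an affine real algebraic variety. I would first check that the graph of this assignment, i.e. $\{(x,\vec{\Omega_x})\,:\,x\in A\}\subset A\times G(d,N)$, is an $\AS$-set. This is a first-order/semialgebraic condition on the $\AS$-set $\Omega$ (membership of a vector in $\vec{\Omega_x}$ is expressible by: the vector is a difference of two points of $\Omega_x$), so the graph is at least semialgebraic; arc-symmetry should follow because along any real analytic arc $t\mapsto x(t)$ with $x(-1,0)\subset A$ the spaces $\vec{\Omega_{x(t)}}$ vary real-analytically (the fiber dimension being constant equal to $d$ lets one choose analytic bases), and $A$ being $\AS$ forces $x(0,\varepsilon)\subset A$, hence the Grassmannian-valued map extends analytically past $0$. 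Composing with the $\AS$-map $G(d,N)\to\AS^{\,M}$ coming from a fixed embedding $G(d,N)\hookrightarrow\R^M$, one concludes the graph is $\AS$.

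Next, by noetherianity of the $\AS$-topology I would partition $A=\bigsqcup_j A_j$ into finitely many $\AS$-sets such that on each $A_j$ there is a fixed choice of $d$ coordinate projections $\R^N\to\R^d$ that restricts to a linear isomorphism on every $\vec{\Omega_x}$, $x\in A_j$ (for each point of $G(d,N)$ some coordinate $d$-plane is a complement, and the locus where a given coordinate $d$-plane works is $\AS$-open in the Grassmannian; pull back and intersect with the graph partition). Fix one piece $B=A_j$ and reorder coordinates so that the first $d$ coordinates give the isomorphism $\vec{\Omega_x}\xrightarrow{\ \sim\ }\R^d$ for all $x\in B$. Then for $x\in B$ the fiber $\Omega_x$ is the graph of a unique affine map $\varphi_x:\R^d\to\R^{N-d}$, and the map $B\times\R^d\to\Omega|_B$, $(x,u)\mapsto(x,u,\varphi_x(u))$ is a bijection. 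It remains to see that this bijection has $\AS$-graph: the coefficients of $\varphi_x$ (the linear part is read off from $\vec{\Omega_x}$, the translation part from any one point of $\Omega_x$, e.g. $\Omega_x\cap(\{0\}^d\times\R^{N-d})$ shifted appropriately) depend on $x$ in an $\AS$-way since they are rational functions of entries of a matrix whose graph over $B$ is $\AS$, together with a component of an $\AS$-set; hence the graph $\{(x,u,\varphi_x(u))\}\subset B\times\R^d\times\R^{N-d}$, being the zero-set, inside the $\AS$-set $B\times\R^d\times\R^{N-d}$, of finitely many $\AS$-functions (the defining affine equations $y=\varphi_x(u)$ with $\AS$-coefficients), is $\AS$. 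This exhibits $\pi^{-1}(B)\simeq B\times\R^d$, and assembling over $j$ gives the $\AS$ piecewise trivial fibration with fiber $\R^d$.

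The main obstacle is the bookkeeping of $\AS$-regularity throughout, specifically the two claims that (a) $x\mapsto\vec{\Omega_x}$ has $\AS$-graph and (b) the resulting affine-chart coordinates $\varphi_x$ depend $\AS$-ly on $x$; both hinge on the fact that constancy of the fiber dimension ($=d$) is exactly what rules out the jumps that would destroy arc-symmetry, so that the semialgebraic descriptions one writes down are automatically compatible with the arc-lifting condition defining $\AS$-sets. Once these are in place, everything else is the finite $\AS$-partition argument permitted by noetherianity. I would also remark that no appeal to Chevalley or the Nullstellensatz is needed here: we never take an image, we only cut out graphs by equations inside already-$\AS$ ambient sets and partition, operations under which $\AS^N$ is closed by definition.
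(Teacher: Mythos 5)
Your overall strategy is the same as the paper's: classify the fibers by a map to the Grassmannian, show that the graph of this map is $\AS$, stratify $A$ according to which coordinate $d$-plane projects isomorphically onto the fibers, and trivialize linearly on each piece. The genuine gap is in your step (a), the $\AS$-ness of the graph of $x\mapsto\vec{\Omega_x}$. Semialgebraicity is fine (Tarski--Seidenberg), but your argument for arc-symmetry rests on the assertion that along a real analytic arc $t\mapsto x(t)$ the subspaces $\vec{\Omega_{x(t)}}$ ``vary real-analytically'' because the fiber dimension is constant. That is not justified: constancy of the fiber dimension in a merely semialgebraic family does not produce analytic local bases (consider a family of lines whose slope along the arc is $|t|$, or $t^{3/2}$); the only hypothesis that can rule out such behaviour is that $\Omega$ itself is $\AS$, and your argument never invokes it at this point. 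A correct direct verification would take an analytic arc $t\mapsto(x(t),g(t))$ lying in the graph for $t<0$, choose regular spanning sections $w_i(g)$ of the tautological bundle, observe that each analytic arc $t\mapsto(x(t),w_i(g(t)))$ lies in $\Omega$ for $t<0$, apply the arc-symmetry of $\Omega$ to get that it lies in $\Omega$ for small $t>0$, and then use the dimension count to upgrade the inclusion $E_{g(t)}\subset\Omega_{x(t)}$ to equality.

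Relatedly, your closing remark that no image of an $\AS$-set ever needs to be taken is inaccurate and hides the real difficulty. Expressing ``$v$ is a difference of two points of $\Omega_x$'' involves an existential quantifier, i.e.\ a projection, which preserves semialgebraicity but not $\AS$-ness. The paper's proof confronts exactly this: it realizes the graph $\tilde A$ as a finite intersection of sets $X_i$, each obtained as the image of an $\AS$-set under an \emph{injective} map with $\AS$-graph, and invokes \cite[Theorem 4.5]{Par04} (images of $\AS$-sets under injective $\AS$-maps are $\AS$); the same theorem is used again to see that the stratum $A_0$, on which a fixed coordinate projection has rank $d$ on every fiber, is $\AS$. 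Your ``pull back and intersect'' description of the strata has the same issue, since pulling back along $x\mapsto\vec{\Omega_x}$ again amounts to projecting an $\AS$-set (albeit injectively). Apart from this essential but patchable point, and the cosmetic difference that the paper first linearizes the affine fibers by passing from $\R^N$ to $\R^{N+1}$ while you carry direction and offset separately, the two proofs coincide.
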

\begin{proof}
Up to embedding the space of $d$-dimensional affine subspaces of $\R^N$ into the space of $d+1$-dimensional vector suspaces of $\R^{N+1}$, we may assume that the fibers are linear subspaces. \\

Denote by $G=\mathbb G_{N,d}$ the Grassmannian of $d$-dimensional linear subspaces of $\R^N$ and let $E\to G$ be the tautological bundle; i.e. for $g\in G$, the fiber $E_g$ is the subspace given by $g$. \\

We are first going to prove that the following set is $\AS$, $$\tilde A=\left\{(x,g)\in A\times G,\,\Omega_x=E_g\right\}.$$
Identifying $G$ with the set of symmetric idempotent $(N\times N)$-matrices of trace $d$, see \cite[Proof of Theorem 3.4.4]{BCR}, for $i=1,\ldots,N$ we define the regular map $w_i:G\to\R^N$ as the projection to the coordinates corresponding to the $i$th-column of such matrices. Then $E_g$ is linearly spanned by $\left(w_i(g)\right)$.
Hence $L_i=\left\{(v,g)\in\R^N\times G,\,v=w_i(g)\right\}$ is $\AS$. Thus
$$\left\{(x,v,g)\in A\times\R^N\times G,\,v=w_i(g)\in\Omega_x\right\}=(\Omega\times G)\cap(A\times L_i)$$
is $\AS$ and its projection $$X_i=\left\{(x,g)\in A\times G,\,w_i(g)\in\Omega_x\right\}$$ is also $\AS$ as the image of an $\AS$-set by an injective $\AS$-map, see \cite[Theorem 4.5]{Par04}. \\
Then $\tilde A = \bigcap_i X_i$ is $\AS$ as claimed.  \\

Let $x_0\in A$. Fix a coordinate system on $\R^N$ such that $\Omega_{x_0}=\left\{x_{d+1}=\cdots=x_N=0\right\}$ and fix the projection $\Lambda:\R^N\rightarrow\R^d$ defined by $\Lambda(x_1,\ldots,x_N)=(x_1,\ldots,x_d)$. 
Let $\omega:\tilde A\rightarrow\R^{N\choose d}$ be such that the coordinates of $\omega(x,g)$ are the $d$-minors of $\left(\Lambda(w_i(g))\right)_{i=1,\ldots,N}$. 
Then $$\tilde{A_0}=\left\{(x,g)\in\tilde A,\,\Lambda:\Omega_x\rightarrow\R^d \text{ is of rank }d\right\}$$ is an $\AS$-set as the complement of $\omega^{-1}(0)$. 
Therefore $$A_0=\left\{x\in A,\,\Lambda:\Omega_x\rightarrow\R^d \text{ is of rank }d\right\}$$ is $\AS$ as the image of the $\AS$-set $\tilde{A_0}$ by the projection to the first factor which is an injective $\AS$-map. \\
Thus $\Phi(x,v)=(x,\Lambda(v))$ is a bijection whose graph is $\AS$.
$$\xymatrix{\pi^{-1}(A_0) \ar[rr]^{\Phi} \ar[rd]_{\pi} & & A_0\times\R^d \ar[ld]^{\mathrm{pr}_{A_0}} \\ & A_0 & }$$
Consequently $\pi:\Omega\rightarrow A$ is locally trivial for the $\AS$-topology and hence it is an $\AS$ piecewise trivial fibration.
\end{proof}

\subsection{The Grothendieck ring of $\AS$-sets}\label{sec:GR-AS}
\begin{defn}
Let $K_0(\AS)$ be the free abelian group generated by $[X]$, $X\in\AS$, modulo
\begin{enumerate}[label=(\roman*),nosep]
\item Let $X,Y\in\AS$. If there is a bijection $X\rightarrow Y$ with $\AS$-graph, then $[X]=[Y]$;
\item If $Y\subset X$ are two $\AS$-sets, then $[X]=[X\setminus Y]+[Y]$.
\end{enumerate}
We put a ring structure on $K_0(\AS)$ by adding the following relation:
\begin{enumerate}[label=(\roman*),nosep,resume]
\item If $X,Y\in\AS$, then $[X\times Y]=[X][Y]$.
\end{enumerate}
\end{defn}

\begin{notation}
We set $0=[\varnothing]$, $1=[\mathrm{pt}]$ and $\LL=[\R]$.
\end{notation}

\begin{rem}
Notice that $0$ is the unit of the addition and $1$ the unit of the multiplication.
\end{rem}

\begin{rem}
If $p:E\rightarrow B$ is an $\AS$ piecewise trivial fibration with fiber $F$, then $$[E]=[B][F]$$
\end{rem}

\begin{defn}
We set $\M=K_0(\AS)\left[\LL^{-1}\right]$.
\end{defn}

The authors of \cite{MP03} proved there exists a unique additive (and multiplicative) invariant of real algebraic varieties up to biregular morphisms which coincides with the Poincaré polynomial for compact non-singular varieties. This construction relies on the weak factorization theorem. Then G. Fichou \cite{Fic05} extended this construction to $\AS$-sets up Nash isomorphisms.

Next, in \cite{MP11}, they gave a new construction of the virtual Poincaré polynomial, related to the weight filtration they introduced in real algebraic geometry. They proved it is an invariant of $\AS$-sets up to homeomorphism with $\AS$-graph. Actually, using the additivity, they proved it is an invariant of $\AS$-sets up to $\AS$-bijections (see \cite[Remark 4.15]{Cam17}).
\begin{thm}[{\cite{MP03,Fic05,MP11}}]
There is a unique ring morphism $\beta:K_0(\AS)\rightarrow\Z[u]$ such that if $X$ compact and non-singular then $$\beta([X])=\sum_{i\ge0}\dim H_i(X,\Z_2)u^i.$$
We say that $\beta([X])$ is the \emph{virtual Poincaré polynomial} of $X$. \\
Moreover, if $X\neq\varnothing$, $\deg\beta(X)=\dim X$ and the leading coefficient of $\beta(X)$ is positive.
\end{thm}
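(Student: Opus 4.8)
The plan is to establish the existence and uniqueness separately, then handle the degree and leading coefficient statement. For uniqueness, I would argue that $K_0(\AS)$ is generated as a group by classes $[X]$ of compact nonsingular $\AS$-sets: indeed, by the additivity relation (ii), any $[X]$ can be rewritten using a compactification and a decomposition into nonsingular locally closed pieces, since $X$ admits a filtration $X = X_0 \supset X_{\sing} = X_1 \supset (X_1)_{\sing} = X_2 \supset \cdots$ with $X_i \setminus X_{i+1}$ nonsingular of strictly decreasing dimension (using that $X_{\sing}$ is $\AS$ of smaller dimension, and that one may pass to the projective closure inside $\P^N_\R$ to achieve compactness, keeping track of the difference which lives in lower dimension or in the hyperplane at infinity). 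Hence a ring morphism $\beta$ is determined by its values on such generators, which are prescribed; this gives uniqueness. Strictly speaking I would instead simply invoke the cited references \cite{MP03,Fic05,MP11} for the construction, since the excerpt grants us everything stated earlier — but the group-generation argument is the conceptual heart and also the clean route to the degree statement.

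For existence I would follow the two available constructions. The first, after Mc~Crory--Parusi\'nski \cite{MP03} and Fichou \cite{Fic05}: define $\beta$ on the generating classes $[X]$, $X$ compact nonsingular, by the Poincaré polynomial $\sum_i \dim H_i(X,\Z_2) u^i$, and check this is well-defined on $K_0(\AS)$, i.e. compatible with relations (i)--(iii). The non-trivial compatibility is additivity across blow-ups, which is exactly where the weak factorization theorem enters: any two compactifications/resolutions are dominated by a common one obtained through a sequence of blow-ups and blow-downs along nonsingular centers, and one checks the alternating-sum identity for a single blow-up of a nonsingular $\AS$-set along a nonsingular center using the projective bundle formula for $\Z_2$-cohomology. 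Multiplicativity (iii) then follows from the Künneth formula on compact nonsingular representatives. The alternative construction of \cite{MP11} via the weight filtration bypasses weak factorization and directly yields invariance under $\AS$-homeomorphisms, hence — by additivity, as noted in the excerpt citing \cite[Remark 4.15]{Cam17} — under $\AS$-bijections, giving relation (i) for free.

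For the last sentence, let $X \neq \varnothing$ with $\dim X = d$. Using the stratification above, write $[X] = \sum_j [S_j]$ with each $S_j$ nonsingular locally closed and at least one $S_j$ of dimension $d$; compactifying each $S_j$ and subtracting lower-dimensional boundary contributions, it suffices to show that for $Z$ compact nonsingular connected of dimension $d$ one has $\deg \beta([Z]) = d$ with positive leading coefficient. This is immediate: $\dim H_d(Z,\Z_2) \geq 1$ since $Z$ carries a fundamental class over $\Z_2$ (every compact manifold is $\Z_2$-orientable), and $H_i(Z,\Z_2) = 0$ for $i > d$. Finally one must check that these top-degree contributions cannot cancel when summing over the strata and correcting for boundaries: the boundary corrections all have strictly smaller dimension, and the top-dimensional strata contribute with positive coefficients that add up, so $\deg \beta([X]) = d$ and the leading coefficient is a positive integer.

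The main obstacle is the well-definedness of $\beta$ in the first construction, namely the invariance under the choice of compact nonsingular model, which genuinely requires the weak factorization theorem (in its real algebraic form) together with the blow-up formula for $\Z_2$-Betti numbers; everything else is bookkeeping with the stratification and standard properties of $\Z_2$-cohomology. Since this is precisely the content of \cite{MP03,Fic05,MP11}, in the paper itself I would simply cite these and only spell out the group-generation remark insofar as it is needed for the degree and leading-coefficient assertion.
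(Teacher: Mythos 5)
The paper gives no proof of this theorem: it is quoted verbatim from \cite{MP03,Fic05,MP11}, and your proposal likewise defers the essential content (well-definedness of $\beta$ via weak factorization, resp.\ the weight-filtration construction) to those references, so the two approaches coincide. Your supplementary sketches of uniqueness (generation of $K_0(\AS)$ by compact nonsingular classes) and of the degree and leading-coefficient assertion are consistent with the standard arguments, modulo the routine point that one needs a nonsingular compactification obtained by resolution of singularities rather than merely the projective closure.
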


\begin{thm}[{\cite[Theorem 1.16]{Fic17}}]
The virtual Poincaré polynomial is a ring isomorphism $$\beta:K_0(\AS)\xrightarrow{\sim}\Z[u].$$
\end{thm}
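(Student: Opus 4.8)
The plan is to establish the isomorphism $\beta : K_0(\AS)\xrightarrow{\sim}\Z[u]$ in two halves: surjectivity, which is immediate, and injectivity, which is the substantive part. For surjectivity, observe that $\beta([\mathrm{pt}])=1$ and $\beta(\LL)=\beta([\R])=u$, since $\R$ is not compact but can be handled via additivity on $\P^1_\R=\R\sqcup\{\mathrm{pt}\}$ together with $\beta([\P^1_\R])=1+u^2$ (the projective line is compact and non-singular with Betti numbers $1,0,1$ over $\Z_2$). Hence $\beta(\LL)=u^2-1$ — wait, this forces a more careful bookkeeping: one should instead use that $\beta$ is already known on affine space from the additive decomposition of $\P^n_\R$, giving $\beta([\R^n])=u^n$, so every monomial $u^n$ is in the image and $\beta$ is surjective onto $\Z[u]$.

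For injectivity I would argue by constructing an explicit inverse on generators, or equivalently by showing that the classes $[\R^n]=\LL^n$, $n\ge 0$, together with $1$, already generate $K_0(\AS)$ as an abelian group. This is where the real content lies and where I expect the main obstacle: one needs that every $\AS$-set $X$ has a class $[X]$ expressible as a $\Z$-linear combination of powers of $\LL$. The natural route is induction on $\dim X$ using a stratification of $X$ into Nash-trivial pieces over $\AS$-bases, reducing to locally closed $\AS$-sets that are Nash manifolds, and then invoking that a connected Nash manifold of dimension $d$ admits a finite $\AS$-stratification (or cell-like decomposition) whose strata have classes that are powers of $\LL$; the additive relation (ii) then assembles $[X]$ into $\sum_i a_i\LL^i$ with $a_i\in\Z$. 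The delicate point is that, unlike the complex case, one cannot simply cite that $\AS$-constructible sets behave well under projection, so one must lean on the resolution- and weak-factorization-based input already packaged in the preceding theorem ($\beta$ well-defined with $\deg\beta(X)=\dim X$ and positive leading coefficient) to run the dimension induction cleanly: if $[X]=\sum a_i\LL^i$ were a non-trivial element of $\ker\beta$, the degree/leading-coefficient statement would be contradicted once we know $\beta$ is injective on $\Z\langle\LL^i\rangle$, which it visibly is since $\beta(\LL^i)=u^i$ are linearly independent in $\Z[u]$.

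Concretely, the steps in order: (1) record $\beta(\LL^i)=u^i$ and note these are $\Z$-independent, so $\beta$ restricted to the subgroup $\Lambda=\bigoplus_i\Z\LL^i\subset K_0(\AS)$ is injective; (2) prove $\Lambda=K_0(\AS)$ by showing every generator $[X]$ lies in $\Lambda$, via Noetherian induction on $\dim X$: stratify $X$ by the $\AS$ piecewise trivial fibration machinery and \cite[Theorem 3.4.4]{BCR}-type arguments into pieces $\AS$-isomorphic to $U_j\times\R^{d_j}$ with $\dim U_j<d_j$ removed or handled inductively, so that by relations (i)–(iii) $[X]=\sum_j[U_j]\LL^{d_j}$ and $[U_j]\in\Lambda$ by the inductive hypothesis; (3) conclude $\beta$ is injective, hence bijective; surjectivity from (1). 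The main obstacle is genuinely step (2): making the stratification into $\R^{d}$-bundle pieces work over $\AS$-bases in the real setting, for which the correct reference is the same circle of ideas (Nash triviality, \cite{MP11}, \cite{Par04}) underlying Proposition~\ref{prop:ptf} and the cited literature — so in practice the proof will cite \cite[Theorem 1.16]{Fic17} and its proof rather than reprove it, and the excerpt is simply stating the result.
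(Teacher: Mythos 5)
The paper does not prove this statement at all: it is quoted verbatim from \cite[Theorem 1.16]{Fic17}, so there is no internal proof to compare against, and your closing remark that ``the excerpt is simply stating the result'' is the correct reading. Judged as a proof sketch, however, your proposal has a genuine gap exactly where you locate the ``real content''. Surjectivity is fine once you fix the bookkeeping ($\P^1_\R\cong S^1$ has $\Z_2$-Betti numbers $1,1$, not $1,0,1$, whence $\beta(\LL)=u$ directly; more generally $\P^n_\R=\R^n\sqcup\P^{n-1}_\R$ gives $\beta(\LL^n)=u^n$). The problem is step (2), the claim that every class $[X]$ lies in the subgroup $\Lambda$ generated by the $\LL^i$. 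Your proposed mechanism --- stratify $X$ into pieces $\AS$-isomorphic to $U_j\times\R^{d_j}$ with $\dim U_j<\dim X$ and induct --- is not available off the shelf: a compact nonsingular surface, say, admits no such global product decomposition, and what is really needed is an $\AS$-cell decomposition whose open cells are $\AS$-sets and whose trivializations have $\AS$-graphs. Ordinary semialgebraic cell decomposition does not deliver this, because semialgebraic homeomorphisms need not have $\AS$-graphs and the cells need not be arc-symmetric. Moreover, the ``$\AS$ piecewise trivial fibration machinery'' of Proposition \ref{prop:ptf} that you invoke concerns truncation maps between jet spaces of a fixed variety; it is not a structure theorem for arbitrary $\AS$-sets and cannot be used to produce the required decomposition.

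In other words, the assertion $\Lambda=K_0(\AS)$ is essentially equivalent to the theorem itself, and establishing it is precisely the nontrivial work done in \cite{Fic17} (via resolution of singularities, compactification, and the structure theory of arc-symmetric sets, in the circle of ideas of \cite{Par04,MP11}). The surrounding logic of your sketch --- $\beta$ is injective on $\Lambda$ because the $u^i$ are $\Z$-independent, hence injectivity of $\beta$ follows once $\Lambda=K_0(\AS)$ --- is sound, but the central step is asserted rather than proved. Since the paper itself only cites the result, the honest version of your argument is the one you end with: record $\beta(\LL^n)=u^n$ for surjectivity and refer to \cite[Theorem 1.16]{Fic17} for injectivity.
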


\begin{rem}
The virtual Poincaré polynomial induces a ring isomorphism
$$\beta:\M\rightarrow\Z[u,u^{-1}].$$
\end{rem}

\begin{defn}
We define the ring $\widehat{\M}$ as the completion of $\M$ with respect to the ring filtration\footnote{i.e. $\mathcal F^{m+1}\M\subset\mathcal F^m\mathcal M$ and $\mathcal F^m\M\cdot\mathcal F^n\M\subset\mathcal F^{m+n}\M$. The last condition induces a ring structure on the group $\widehat{\M}$.} defined by the following subgroups induced by dimension $$\mathcal F^m\M=\left<[S]\LL^{-i}, i-\dim S \ge m\right>$$ i.e. $$\widehat{\M}=\quotient{\varprojlim\M}{\mathcal F^m\M}.$$
\end{defn}

\begin{prop}
The virtual Poincaré polynomial induces a ring isomorphism $$\beta:\widehat{\M}\rightarrow\Z[u]\llbracket u^{-1}\rrbracket.$$
\end{prop}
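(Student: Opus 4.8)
The plan is to transport the known isomorphism $\beta:\M\xrightarrow{\sim}\Z[u,u^{-1}]$ through the completion functor, checking that the filtration on $\M$ defined by the subgroups $\mathcal F^m\M$ corresponds under $\beta$ to the natural decreasing filtration on $\Z[u,u^{-1}]$ whose $m$-th piece is the set of Laurent polynomials all of whose monomials have degree $\le -m$. Once this correspondence of filtered rings is established, completing both sides gives the desired isomorphism $\widehat{\M}\xrightarrow{\sim}\Z[u]\llbracket u^{-1}\rrbracket$, since $\varprojlim$ is functorial and the completion of $\Z[u,u^{-1}]$ with respect to that filtration is visibly the ring of formal power series in $u^{-1}$ with coefficients allowed to involve finitely many positive powers of $u$, i.e. $\Z[u]\llbracket u^{-1}\rrbracket$.

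The key steps, in order, are as follows. First I would spell out the image $\beta(\mathcal F^m\M)$: a generator $[S]\LL^{-i}$ with $i-\dim S\ge m$ maps to $\beta([S])u^{-i}$, and by the last part of the theorem on the virtual Poincaré polynomial $\beta([S])$ has degree exactly $\dim S$, so $\beta([S])u^{-i}$ is a Laurent polynomial of degree $\dim S - i\le -m$. Conversely any Laurent polynomial of degree $\le -m$ is, term by term, an integer combination of monomials $u^{-j}$ with $j\ge m$, and $u^{-j}=\beta([\mathrm{pt}])\LL^{-j}$-image lies in $\mathcal F^m\M$ since $j - \dim(\mathrm{pt}) = j \ge m$. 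Hence $\beta(\mathcal F^m\M)=u^{-m}\Z[u]$ exactly (equivalently, the set of $p\in\Z[u,u^{-1}]$ with $\deg p\le -m$, with the convention $\deg 0=-\infty$). Second, since $\beta$ is a ring isomorphism carrying $\mathcal F^m\M$ bijectively onto this filtration piece, it induces compatible isomorphisms $\M/\mathcal F^m\M\xrightarrow{\sim}\Z[u,u^{-1}]/u^{-m}\Z[u]$ for every $m$, commuting with the transition maps of the two inverse systems. Third, passing to the inverse limit yields $\widehat{\M}=\varprojlim\M/\mathcal F^m\M\xrightarrow{\sim}\varprojlim\Z[u,u^{-1}]/u^{-m}\Z[u]$, and the latter inverse limit is identified with $\Z[u]\llbracket u^{-1}\rrbracket$: an element is a compatible family of truncations, equivalently a formal series $\sum_{k\ge -r}a_k u^{-k}$ with $r\ge 0$ and $a_k\in\Z$, which is precisely a formal power series in $u^{-1}$ over $\Z[u]$. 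Finally I would note that the inverse limit of ring homomorphisms respecting the multiplicative filtration condition $\mathcal F^m\M\cdot\mathcal F^n\M\subset\mathcal F^{m+n}\M$ is again a ring homomorphism, so the resulting bijection $\beta:\widehat{\M}\to\Z[u]\llbracket u^{-1}\rrbracket$ is a ring isomorphism.

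The only point requiring genuine care — and the one I would flag as the main obstacle — is the precise matching of filtrations, i.e. verifying that $\beta(\mathcal F^m\M)$ is exactly $u^{-m}\Z[u]$ and not merely contained in it or containing it. The containment $\beta(\mathcal F^m\M)\subset u^{-m}\Z[u]$ uses that a generator $[S]\LL^{-i}$ has $\deg\beta([S]\LL^{-i}) = \dim S - i \le -m$, which relies essentially on the equality $\deg\beta([S])=\dim S$ (with the positivity of the leading coefficient ensuring no cancellation can lower the degree when one forms $\Z$-linear combinations of such generators corresponding to sets of distinct dimensions — more precisely, one should argue on the level of the filtration: $\mathcal F^m\M$ is by definition generated by such elements, so its image is generated by Laurent polynomials of degree $\le -m$, and the set of such Laurent polynomials is closed under $\Z$-linear combination). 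The reverse containment is the easy direction, using $u^{-j}\in\beta(\mathcal F^m\M)$ for $j\ge m$. Once this equality is in hand the rest is formal nonsense about completions and inverse limits, and the proposition follows.
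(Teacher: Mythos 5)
Your proposal is correct and follows essentially the same route as the paper: transport the dimension filtration through the isomorphism $\beta:\M\xrightarrow{\sim}\Z[u,u^{-1}]$ and identify the completion of $\Z[u,u^{-1}]$ along the degree filtration with $\Z[u]\llbracket u^{-1}\rrbracket$ via the truncation maps (the paper spends its effort on this last identification and leaves the matching of filtrations implicit, whereas you do the opposite). One notational slip: the image $\beta(\mathcal F^m\M)$ is $u^{-m}\Z[u^{-1}]$, the Laurent polynomials of degree $\le -m$ as your parenthetical correctly states, not $u^{-m}\Z[u]$.
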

\begin{proof}
We have to prove that $$\varprojlim_m\quotient{\Z[u,u^{-1}]}{\mathcal F^m\Z[u,u^{-1}]}=\Z[u]\llbracket u^{-1}\rrbracket$$ where $\mathcal F^m\Z[u,u^{-1}]=\left<f\in\Z[u,u^{-1}], \deg f\le-m\right>$.

For $n<m$, we define $$\rho_{m,n}:\quotient{\Z[u,u^{-1}]}{\mathcal F^m\Z[u,u^{-1}]}\rightarrow\quotient{\Z[u,u^{-1}]}{\mathcal F^n\Z[u,u^{-1}]}$$ by $$\sum_{k=-m+1}^ra_ku^k\mapsto\sum_{k=-n+1}^ra_ku^k$$ and $$\rho_{m}:\Z[u]\llbracket u^{-1}\rrbracket\rightarrow\quotient{\Z[u,u^{-1}]}{\mathcal F^n\Z[u,u^{-1}]}$$ by $$\sum_{k=-\infty}^ra_ku^k\mapsto\sum_{k=-m+1}^ra_ku^k$$

By construction, $$\varprojlim_m\quotient{\Z[u,u^{-1}]}{\mathcal F^m\Z[u,u^{-1}]}=\left\{(f_m)\in\prod_{m\in\Z}\quotient{\Z[u,u^{-1}]}{\mathcal F^m\Z[u,u^{-1}]},\,n<m\Rightarrow\rho_{m,n}(f_m)=f_n\right\}$$

The morphism $$\varphi:\Z[u]\llbracket u^{-1}\rrbracket\rightarrow\varprojlim_m\quotient{\Z[u,u^{-1}]}{\mathcal F^m\Z[u,u^{-1}]}$$ defined by $f\mapsto(\rho_m(f))_{m\in\Z}$ is an isomorphism.
\end{proof}

\begin{defn}\label{defn:vdim}
For $\alpha\in\M$, we define the virtual dimension of $\alpha$ by $\dim\alpha=m$ where $m$ is the only integer such that $\alpha\in\mathcal F^{-m}\M\setminus\mathcal F^{-m+1}\M$.
\end{defn}

\begin{prop}
$\dim\alpha=\deg(\beta(\alpha))$
\end{prop}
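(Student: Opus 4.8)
The plan is to use the virtual Poincaré polynomial isomorphism $\beta:\widehat{\M}\rightarrow\Z[u]\llbracket u^{-1}\rrbracket$ together with the fact that it respects the two filtrations. First I would recall from the preceding proposition that $\beta$ is a ring isomorphism carrying $\widehat{\M}$ onto $\Z[u]\llbracket u^{-1}\rrbracket$, and that by its very construction it sends the filtration $\mathcal F^m\M$ into $\mathcal F^m\Z[u,u^{-1}] = \left<f,\ \deg f\le -m\right>$: indeed a generator $[S]\LL^{-i}$ with $i-\dim S\ge m$ is sent to $\beta([S])u^{-i}$, a polynomial in $\Z[u,u^{-1}]$ of degree $\dim S - i \le -m$ (using that $\deg\beta([S])=\dim S$ for $S\neq\varnothing$, and that the zero element lies in every $\mathcal F^m$). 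So $\beta(\mathcal F^m\M)\subset \mathcal F^m\Z[u]\llbracket u^{-1}\rrbracket$.

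Next I would prove the reverse inclusion, i.e. that $\beta$ identifies the two filtrations exactly. Since $\beta$ is an isomorphism, it suffices to show that any element of $\widehat{\M}$ whose image has degree $\le -m$ already lies in $\mathcal F^m\M$. Here I would use that $\beta$ restricted to $\M$ is the isomorphism $\M\xrightarrow{\sim}\Z[u,u^{-1}]$, and that $\LL^{-i}$ maps to $u^{-i}$, while $[S]$ with $\dim S = s$ maps to a degree-$s$ polynomial with positive leading coefficient. Given $\alpha\in\M$ with $\deg\beta(\alpha) = s$, writing $\beta(\alpha) = a_s u^s + \cdots$ with $a_s\neq 0$, I can multiply by $\LL^{-(s+m)}$ to see $\LL^{-(s+m)}\alpha$ has $\beta$-degree $-m$, hence (peeling off terms) lands in $\mathcal F^m\M$; this is enough to conclude that the virtual dimension defined in Definition \ref{defn:vdim} agrees with $\deg\beta$. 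More directly: by Definition \ref{defn:vdim}, $\dim\alpha = m$ means $\alpha\in\mathcal F^{-m}\M\setminus\mathcal F^{-m+1}\M$; applying $\beta$ and the inclusion above gives $\deg\beta(\alpha)\le m$, and surjectivity of $\beta$ together with the filtration on the target (which is separated and exhaustive) forces $\deg\beta(\alpha) = m$, since if $\deg\beta(\alpha)< m$ then $\beta(\alpha)\in\mathcal F^{-m+1}\Z[u]\llbracket u^{-1}\rrbracket$ and pulling back via the isomorphism would put $\alpha\in\mathcal F^{-m+1}\M$, a contradiction.

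The one point requiring a little care — and the main obstacle — is the pullback step: I need that $\beta^{-1}(\mathcal F^m\Z[u]\llbracket u^{-1}\rrbracket)\subset \mathcal F^m\M$, not merely the forward inclusion, so that the filtrations genuinely correspond and the contradiction above is legitimate. To handle this I would argue at the level of the uncompleted rings, where $\beta:\M\xrightarrow{\sim}\Z[u,u^{-1}]$ is known to be an isomorphism: given $\alpha\in\M$ with $\deg\beta(\alpha)\le -m$, write $\beta(\alpha) = \sum_{k\le -m} a_k u^k$; each monomial $a_k u^k$ is $\beta(a_k \LL^k)$ with $a_k\in\Z$ and $-k\ge m = m - \dim(\mathrm{pt})$ when $a_k\neq 0$ (or $k$ very negative), so $a_k\LL^k\in\mathcal F^m\M$, and taking the (finite, in $\M$) sum shows $\alpha\in\mathcal F^m\M$. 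Passing to the completion then yields $\beta^{-1}(\mathcal F^m\Z[u]\llbracket u^{-1}\rrbracket)\subset\mathcal F^m\widehat\M$, which closes the argument. All remaining steps are bookkeeping with the definitions of $\mathcal F^m$, Definition \ref{defn:vdim}, and the degree formula $\deg\beta(S) = \dim S$.
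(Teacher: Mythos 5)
Your proof is correct, and since the paper states this proposition without any proof (treating it as immediate), your argument supplies exactly the intended justification: the key point is that $\beta$ identifies the filtration $\mathcal F^m\M$ with the degree filtration $\left<f\in\Z[u,u^{-1}],\ \deg f\le -m\right>$ in \emph{both} directions --- the forward inclusion from $\deg\beta([S])=\dim S$, the reverse from the fact that every Laurent monomial $a_ku^k$ with $k\le -m$ is $\beta(a_k\LL^{k})$ with $a_k\LL^{k}\in\mathcal F^m\M$, combined with injectivity of $\beta$ on $\M$ --- after which the definition of virtual dimension translates verbatim into $\deg\beta(\alpha)$. The intermediate ``multiply by $\LL^{-(s+m)}$'' step is superfluous, but your final paragraph replaces it with the complete and correct argument.
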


\begin{rem}
Notice that for $x\in\M$, $\left(x+\mathcal F^m\M\right)_m$ defines a basis of open neighborhoods. This topology coincides with the one induced by the non-archimedean norm $\|\cdot\|:\M\rightarrow\R$ defined by $\|\alpha\|=e^{\dim(\alpha)}$. The completion $\widehat\M$ is exactly the topological completion with respect to this non-archimedean norm. Particularly,
\begin{itemize}[nosep]
\item Let $(\alpha_n)\in\M$, then $\alpha_n\rightarrow0$ in $\widehat\M$ if and only if $\dim(\alpha_n)\rightarrow-\infty$.
\item Let $(\alpha_n)\in\M$, then $\sum_n\alpha_n$ converges in $\widehat\M$ if and only if $\alpha_n\rightarrow0$ in $\widehat\M$.
\item The following equality holds in $\widehat\M$: $$(1-\LL^{-p})\sum_{i=0}^\infty\LL^{-pi}=1$$
\end{itemize}
\end{rem}

\begin{defn}\label{defn:order}
We define an order on $\widehat\M$ as follows. For $a,b\in\widehat\M$, we set $a\preceq b$ if and only if either $b=a$ or the leading coefficient of the virtual Poincaré polynomial $\beta(b-a)$ is positive.
\end{defn}

\begin{rem}
Notice that this real setting has good algebraic properties compared to its complex counterpart:
\begin{itemize}[nosep]
\item $K_0(\AS)$ is an integral domain whereas $K_0(\mathrm{Var}_\mathbb C)$ is not \cite{Poo02}. Indeed, there is no zero divisor in $K_0(\AS)$ whereas the class of the affine line is a zero divisor of $K_0(\mathrm{Var}_\mathbb C)$ \cite{Bor14} \cite{Mar16}. Notice that in particular $K_0(\mathrm{Var}_\mathbb C)\rightarrow\M_\mathbb C= K_0(\mathrm{Var}_\mathbb C)\left[\LL_\mathbb C^{-1}\right]$ is not injective.
\item The natural map $\M\rightarrow\widehat\M$ is injective. Indeed its kernel is $\cap_m\mathcal F^m\M$ and the virtual Poincaré polynomial allows us to conclude: if $\alpha\in\cap_m\mathcal F^m\M$, then, for all $m\in\Z$, $\deg\alpha\le-m$ and hence $\alpha=0$. In the complex case, it is not known whether $\M_\mathbb C\rightarrow\widehat\M_\mathbb C$ is injective.
\end{itemize}
\end{rem}

\subsection{Real motivic measure}
M. Kontsevitch introduced motivic integration in the non-singular case where the measurable sets were the cylinders by using the fact that they are stable. Still in the non-singular case, V. Batyrev \cite[\S6]{Bat98} enlarged the collection of measurable sets: a subset of the arc space is measurable if it may be approximated by stable sets.

Concerning the singular case, J. Denef and F. Loeser \cite{DL99} defined a measure and a first family of measurable sets including cylinders. Then, in \cite[Appendix]{DL02}, they used the tools they developped in the singular case to adapt the definition of V. Batyrev to the singular case. See also \cite{Loo02}.

From now on we assume that $X$ is a $d$-dimensional algebraic subset of $\R^N$.

\begin{defn}
A subset $A\subset\L(X)$ is said to be \emph{stable} at level $n$ if:
\begin{itemize}[nosep]
\item For $m\ge n$, $\pi_m(A)$ is an $\AS$-subset of $\L_m(X)$;
\item For $m\ge n$, $A=\pi_m^{-1}(\pi_m(A))$;
\item For $m\ge n$, $\pi^{m+1}_m:\pi_{m+1}(A)\rightarrow\pi_m(A)$ is an $\AS$ piecewise trivial fibration with fiber $\R^d$.
\end{itemize}
\end{defn}

\begin{rem}
Notice that, for the two first points, it is enough to verify that $\pi_n(A)\in\AS$ and that $A=\pi_n^{-1}(\pi_n(A))$ only for $n$. Indeed, then, for $m\ge n$, $\pi_m(A)=(\pi^m_n)^{-1}(\pi_n(A))$ is an $\AS$-set as inverse image of an $\AS$-set by a projection.
\end{rem}

Then the following proposition holds (notice that the condition $A=\pi_m^{-1}(\pi_m(A))$ is quite important).
\begin{prop}
If $A,B$ are stable subsets of $\L(X)$, then $A\cup B$, $A\cap B$ and $A\setminus B$ are stable too.
\end{prop}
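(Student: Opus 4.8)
The plan is to reduce everything to the case where $A$ and $B$ are stable at a common level $n$, and then to analyze the three operations at finite level. First I would observe that if $A$ is stable at level $n$ and $n'\ge n$, then $A$ is also stable at level $n'$: the conditions $\pi_{m}(A)\in\AS$, $A=\pi_m^{-1}(\pi_m(A))$ and the piecewise trivial fibration condition for $\pi^{m+1}_m$ are all required only for $m\ge n$, hence a fortiori for $m\ge n'$. So, given $A$ stable at level $n_A$ and $B$ stable at level $n_B$, set $n=\max(n_A,n_B)$; both $A$ and $B$ are stable at level $n$.

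Next I would handle the first two conditions (the ones about $\pi_m$ of the set being $\AS$ and about the set being the full preimage of its truncation). For $m\ge n$ we have $\pi_m(A\cup B)=\pi_m(A)\cup\pi_m(B)$, which is $\AS$ since $\AS$ is a boolean algebra; and $\pi_m(A\cap B)=\pi_m(A)\cap\pi_m(B)$ once we know $A=\pi_m^{-1}(\pi_m(A))$ and $B=\pi_m^{-1}(\pi_m(B))$ (this equality of images genuinely uses the stability hypothesis — in general only $\pi_m(A\cap B)\subseteq\pi_m(A)\cap\pi_m(B)$ holds, but here a point of $\pi_m(A)\cap\pi_m(B)$ lifts to an arc lying in both $A$ and $B$ because each of $A,B$ is a cylinder over its $m$-truncation). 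Similarly $\pi_m(A\setminus B)=\pi_m(A)\setminus\pi_m(B)$, again using that $A,B$ are preimages of their truncations: if $\gamma\in A$ and $\pi_m(\gamma)\in\pi_m(B)$ then $\gamma\in\pi_m^{-1}(\pi_m(B))=B$, so the arcs of $A\setminus B$ truncate exactly onto $\pi_m(A)\setminus\pi_m(B)$. Consequently $A\cup B$, $A\cap B$, $A\setminus B$ are all equal to the $\pi_m$-preimage of an $\AS$-subset of $\L_m(X)$ for every $m\ge n$, giving the first two bullet points.

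It remains to verify the fibration condition: for $m\ge n$, $\pi^{m+1}_m$ restricted to the $(m{+}1)$-truncation of each of the three sets is an $\AS$ piecewise trivial fibration with fiber $\R^d$. Here I would argue as follows. Since $A$ is stable at level $n$, $\pi^{m+1}_m\colon\pi_{m+1}(A)\to\pi_m(A)$ is $\AS$ piecewise trivial with fiber $\R^d$, so there is a finite partition $\pi_m(A)=\sqcup C_i$ into $\AS$-sets trivializing it; likewise for $B$ with a partition $\pi_m(B)=\sqcup D_j$. Refining both by intersecting with $\pi_m(A)\cup\pi_m(B)$, $\pi_m(A)\cap\pi_m(B)$, $\pi_m(A)\setminus\pi_m(B)$ and with each other, one gets a common finite $\AS$-partition of $\pi_m(A)\cup\pi_m(B)$, each piece of which lies entirely in one of the three regions and over which the relevant map $\pi^{m+1}_m$ is trivial with fiber $\R^d$; the point is that over a piece $E$ contained in $\pi_m(A)\cap\pi_m(B)$ the fiber of $\pi^{m+1}_m$ on $\pi_{m+1}(A\cap B)$ equals the fiber on $\pi_{m+1}(A)$ (because, $A$ and $B$ being cylinders, an $(m{+}1)$-jet over a point of $\pi_m(A\cap B)$ that lifts to an arc in $A$ automatically lifts to an arc in $B$, hence in $A\cap B$), and similarly for the other two operations. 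So on each piece the map is $B_i\times\R^d\to B_i$, which shows $\pi^{m+1}_m$ is an $\AS$ piecewise trivial fibration with fiber $\R^d$ over $\pi_m(A\cup B)$, over $\pi_m(A\cap B)$, and over $\pi_m(A\setminus B)$.

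The only genuinely delicate point — and the one I would be most careful about — is exactly this last identification of fibers: it must be checked that for $\gamma\in\L_m(X)$ lying in $\pi_m(A)\cap\pi_m(B)$ one has $(\pi^{m+1}_m)^{-1}(\gamma)\cap\pi_{m+1}(A)=(\pi^{m+1}_m)^{-1}(\gamma)\cap\pi_{m+1}(A\cap B)$, and analogously with $\pi_{m+1}(A\setminus B)$ over a point of $\pi_m(A)\setminus\pi_m(B)$. Both follow from the cylinder property $A=\pi_{m+1}^{-1}(\pi_{m+1}(A))$, $B=\pi_{m+1}^{-1}(\pi_{m+1}(B))$ (valid since $m{+}1\ge n$): an $(m{+}1)$-jet $\delta\in\pi_{m+1}(A)$ over such a $\gamma$ lifts to some arc $\tilde\delta\in A$, and since $\pi_m(\tilde\delta)=\gamma\in\pi_m(B)$ and $B$ is the $\pi_{m+1}$-preimage of $\pi_{m+1}(B)$... one must instead use that $\delta$ itself satisfies $\pi_m(\delta)=\gamma\in\pi_m(B)$; here the cleanest route is to invoke that $\pi_{m+1}(B)=(\pi^{m+1}_m)^{-1}(\pi_m(B))$, which holds because $B$ is stable, so any $(m{+}1)$-jet over $\gamma\in\pi_m(B)$ lies in $\pi_{m+1}(B)$, whence $\delta\in\pi_{m+1}(A)\cap\pi_{m+1}(B)=\pi_{m+1}(A\cap B)$. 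Once this bookkeeping is in place, the three bullet points in the definition of stability hold for $A\cup B$, $A\cap B$ and $A\setminus B$ at level $n$, and the proposition follows.
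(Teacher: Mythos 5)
The paper states this proposition without proof, remarking only that the cylinder condition $A=\pi_m^{-1}(\pi_m(A))$ is the essential ingredient; your argument is exactly the intended one, and it is essentially correct. The only thing to fix is the justification you settle on at the very end: the identity $\pi_{m+1}(B)=(\pi^{m+1}_m)^{-1}(\pi_m(B))$ is \emph{false} in general when $X$ is singular, because the preimage on the right, taken inside $\L_{m+1}(X)$, contains $(m{+}1)$-jets that do not lift to arcs at all (over a point $\gamma(0)\in X_{\sing}$ the fiber of $\pi^{m+1}_m$ in $\L_{m+1}(X)$ is isomorphic to $T^{\mathrm{Zar}}_{\gamma(0)}X$, of dimension $>d$, while the fiber of $\pi_{m+1}(B)\to\pi_m(B)$ is $\R^d$). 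Fortunately you do not need that identity: the first route you sketch and then abandon is the correct one. Given $\delta\in\pi_{m+1}(A)$ with $\pi^{m+1}_m(\delta)=\gamma\in\pi_m(B)$, lift $\delta$ to an arc $\tilde\delta\in A$ (possible precisely because $\delta$ lies in the \emph{image} $\pi_{m+1}(A)$, not merely in $\L_{m+1}(X)$); then $\pi_m(\tilde\delta)=\gamma\in\pi_m(B)$ forces $\tilde\delta\in\pi_m^{-1}(\pi_m(B))=B$, hence $\tilde\delta\in A\cap B$ and $\delta\in\pi_{m+1}(A\cap B)$. The same lifting argument gives the fiber identification for $A\setminus B$ and $A\cup B$. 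With that one-line repair, all three bullet points of stability hold at level $n=\max(n_A,n_B)$ and the proof is complete.
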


\begin{rem}
Notice that $\L(X)$ may not be stable when $X$ is singular.
\end{rem}

\begin{defn}
For $A\subset\L(X)$ a stable set, we define its measure by $$\mu(A)=\frac{[\pi_n(A)]}{\LL^{(n+1)d}}\in\M,\,n\gg1.$$
\end{defn}

\begin{defn}
The virtual dimension of a stable set is $$\dim(A)=\dim(\pi_n(A))-(n+1)d,\,n\gg1.$$
\end{defn}

\begin{rem}
Notice that the previous definitions don't depend on $n$ for $n$ big enough.
\end{rem}

\begin{rem}
Notice that $\dim(A)=\dim(\mu(A))$ where the second dimension is the one introduced in Definition \ref{defn:vdim}.
\end{rem}

\begin{defn}\label{defn:measurable}
A subset $A\subset\L(X)$ is measurable if, for every $m\in\Z_{<0}$, there exist
\begin{itemize}[nosep]
\item a stable set $A_m\subset\L(X)$;
\item a sequence of stable sets $(C_{m,i}\subset\L(X))_{i\ge0}$
\end{itemize}
such that
\begin{itemize}[nosep]
\item $\forall i$, $\dim C_{m,i}<m$;
\item $A\Delta A_m\subset\cup C_{m,i}$
\end{itemize}
Then we define the measure of $A$ by $\displaystyle\mu(A)=\lim_{m\to-\infty}\mu(A_m)$.
\end{defn}

\begin{prop}\label{prop:Mlimit}
The previous limit is well defined in $\widehat\M$ and doesn't depend on the choices.
\end{prop}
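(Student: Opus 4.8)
The plan is to show the limit defining $\mu(A)$ exists in $\widehat{\M}$ (i.e. that $(\mu(A_m))_m$ is Cauchy) and that it is independent of the auxiliary data $(A_m, (C_{m,i})_i)$; both facts will follow from a single comparison estimate.

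First I would record the basic size estimate: if $B$ is a stable set with $\dim B < m$, then $\mu(B) \in \mathcal F^{-m+1}\M$, hence $\|\mu(B)\| < e^{m}$; and more generally if $B$ is a countable union $B = \bigcup_i C_i$ of stable sets with $\dim C_i < m$ for all $i$, then although $B$ need not be stable, any stable subset $B' \subset B$ still satisfies $\dim B' < m$ (since $\pi_n(B') \subset \bigcup_i \pi_n(C_i)$ and dimension is subadditive over countable — in fact here finite suffices after truncation — unions of $\AS$-sets), so $\mu(B') \in \mathcal F^{-m+1}\M$. This is the key monotonicity input and uses only that $\dim$ on $\M$ agrees with $\deg\beta$ and the definition of the filtration $\mathcal F^\bullet\M$.

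Next, the comparison lemma: if $A_m, A_{m'}$ are stable sets associated to levels $m \le m'$ in the sense of Definition \ref{defn:measurable}, then $A_m \,\Delta\, A_{m'} \subset (A\,\Delta\,A_m) \cup (A\,\Delta\,A_{m'}) \subset \bigcup_i C_{m,i} \cup \bigcup_i C_{m',i}$, a countable union of stable sets each of dimension $< m$. Now $A_m \setminus A_{m'}$ and $A_{m'} \setminus A_m$ are stable (difference of stable sets, by the proposition on Boolean operations), and each is contained in that union, so by the previous paragraph each has virtual dimension $< m$; hence $\mu(A_m) - \mu(A_{m'}) = \mu(A_m \setminus A_{m'}) - \mu(A_{m'} \setminus A_m) \in \mathcal F^{-m+1}\M$, using additivity of $\mu$ on disjoint stable sets (which follows from additivity of $[\,\cdot\,]$ in $K_0(\AS)$ applied at a common high truncation level $n$). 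Therefore $\|\mu(A_m) - \mu(A_{m'})\| < e^{m} \to 0$ as $m \to -\infty$, so $(\mu(A_m))_m$ is a Cauchy sequence in the complete ring $\widehat{\M}$ and the limit exists.

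Finally, independence of choices: given a second system $(A'_m, (C'_{m,i})_i)$, the same argument applied to $A_m \,\Delta\, A'_m \subset (A\,\Delta\,A_m)\cup(A\,\Delta\,A'_m)$ shows $\mu(A_m) - \mu(A'_m) \in \mathcal F^{-m+1}\M$, so the two Cauchy sequences have the same limit in $\widehat{\M}$. The main obstacle is the bookkeeping around countable unions of stable sets: one must be careful that although $A\,\Delta\,A_m$ sits inside an infinite union $\bigcup_i C_{m,i}$ and is not itself stable, the relevant \emph{stable} pieces $A_m\setminus A_{m'}$ etc.\ do land in that union and do inherit the dimension bound — this is exactly where the hypothesis $\dim C_{m,i} < m$ (uniformly in $i$) is used, together with the fact that at any fixed truncation level $n$ the images $\pi_n(C_{m,i})$ only produce finitely many distinct $\AS$-sets' worth of dimension contribution, so subadditivity of dimension is unproblematic.
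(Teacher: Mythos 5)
Your overall architecture matches the paper's: compare $A_m$ and $A_{m'}$ through their symmetric difference, bound the virtual dimension of that difference by $\max(m,m')$ using the covering by the $C$'s, conclude that $\bigl(\mu(A_m)\bigr)_m$ is Cauchy, and run the same computation with two systems of data for independence of choices. The only nontrivial point, and the one where you diverge from the paper, is how to get a dimension bound for a \emph{stable} set contained in a \emph{countable} union $\bigcup_i C_{m,i}\cup\bigcup_i C_{m',i}$. The paper does this via Lemma \ref{lem:finitesubcov} (a compactness statement: a stable set covered by countably many stable sets is covered by finitely many of them), which rests on the noetherianity/Baire argument of Lemma \ref{lem:ASBaire}. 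You instead invoke countable subadditivity of dimension. That fact is true for semialgebraic sets: if a semialgebraic $S$ of dimension $D$ is contained in $\bigcup_i S_i$ with each $S_i$ semialgebraic of dimension $<D$, take a $D$-dimensional open cell $U\subset S$; then $U=\bigcup_i\bigl(U\cap\overline{S_i}\bigr)$ is a countable union of closed subsets of the Baire space $U$, so some $\overline{S_i}$ has nonempty interior in $U$, contradicting $\dim\overline{S_i}=\dim S_i<D$. With this in hand your argument goes through and is, for this proposition, lighter than the paper's, since it bypasses Lemma \ref{lem:finitesubcov} entirely.

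However, the justification you actually give for that step is not correct. You assert that ``finite suffices after truncation'' because at a fixed level $n$ the images $\pi_n(C_{m,i})$ ``only produce finitely many distinct $\AS$-sets' worth of dimension contribution.'' There is no reason for this: the $C_{m,i}$ may be stable at levels $n_i\to\infty$, and at a fixed $n$ the sets $\pi_n(C_{m,i})$, $i\in\N$, can be infinitely many pairwise distinct semialgebraic sets (for $n<n_i$ they need not even be $\AS$, only semialgebraic by Tarski--Seidenberg). Reducing the countable cover to a finite one is exactly the content of Lemma \ref{lem:finitesubcov} and cannot be read off at a fixed truncation level. So you must either prove countable subadditivity of dimension honestly (the Baire argument on a top-dimensional cell sketched above) or fall back on the paper's finite subcover lemma; as written, the key step is unsupported.
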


The proof of the above Proposition, presented below, relies on the following two lemmas.

\begin{lemma}\label{lem:ASBaire}
Let $(A_i)_{i\in\N}$ be a decreasing sequence of non-empty $\AS$-sets $$A_1\supset A_2\supset\cdots$$
Then $$\bigcap_{i\in N}A_i\neq\varnothing.$$
\end{lemma}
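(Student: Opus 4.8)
The plan is to prove this compactness-type statement by reducing to a dimension count via the virtual Poincaré polynomial, exploiting the noetherianity of the $\AS$-topology. The key point is that a decreasing sequence of \emph{non-empty} $\AS$-sets cannot have empty intersection because the dimension (equivalently, the degree of $\beta$) would have to drop infinitely often, which is impossible for non-negative integers, \emph{unless} the sets eventually stabilize — and if they stabilize at a non-empty set the intersection is non-empty.

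First I would reduce to the case where each $A_i$ is closed for the $\AS$-topology. Replacing $A_i$ by its $\AS$-closure $\clos{A_i}$ we still have a decreasing sequence of non-empty sets, and $\bigcap_i A_i \supset \bigcap_i \clos{A_i}$ would not immediately give what we want, so instead I would argue more carefully: by taking Zariski-type closures in some common ambient $\P_\R^N$ (all $A_i$ lie in a single $\AS^N$ since the family is decreasing, hence eventually all in the ambient of $A_1$), noetherianity of the noetherian $\AS$-topology gives that the decreasing sequence of closed sets $\clos{A_1}\supset\clos{A_2}\supset\cdots$ stabilizes: there is $i_0$ with $\clos{A_i}=\clos{A_{i_0}}$ for all $i\ge i_0$. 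Now the issue is that the $A_i$ themselves need not stabilize, only their closures do.

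So the heart of the argument is a dimension induction on $d=\dim\clos{A_1}$. For $i\ge i_0$, write $Z=\clos{A_{i_0}}$ and consider $B_i=Z\setminus A_i$, a decreasing-complement, i.e.\ an \emph{increasing} sequence of $\AS$-subsets of $Z$, each with empty interior in $Z$ for the $\AS$-topology (since $A_i$ is $\AS$-dense in $Z$); hence $\dim B_i<\dim Z=d$, and moreover $\dim\clos{B_i}<d$. If some $\clos{B_i}=\clos{B_{i+1}}=\cdots$ stabilizes, apply the inductive hypothesis to the increasing sequence — wait, we need a decreasing sequence; instead I would set this up as: if $\bigcap_i A_i=\varnothing$ then $Z=\bigcup_i B_i$, and since the $\clos{B_i}$ form an increasing sequence of closed subsets of $Z$ they stabilize by noetherianity to some closed set $Z'\subsetneq Z$ of dimension $<d$; but then $Z\setminus Z'$ is a non-empty $\AS$-open subset of $Z$ contained in no $B_i$, hence contained in every $A_i$ (as $B_i\subset Z'$ for large $i$... ), forcing $\bigcap_i A_i\ne\varnothing$, a contradiction. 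The base case $d=-\infty$ (empty $Z$) is vacuous, and $d=0$ (finite sets) is clear since a decreasing sequence of non-empty finite sets stabilizes.

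The main obstacle I anticipate is the bookkeeping around $\AS$-closures versus the sets themselves: one must use that $\AS$ is a boolean algebra (so complements and the closed/open dichotomy behave well) together with the fact that a proper $\AS$-closed subset has strictly smaller dimension — this is exactly what lets the noetherian induction terminate. One should be careful that $\bigcup_i B_i$ is a countable union and need not be $\AS$, but we only use its closure; and that "$A_i$ is $\AS$-dense in $Z$" follows from $\clos{A_i}=Z$. Assembling these, the contradiction $Z\setminus Z'\subset\bigcap_i A_i$ closes the proof.
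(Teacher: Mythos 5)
There is a genuine gap, and it sits exactly where the real content of the lemma lies. After reducing to $Z=\clos[\AS]{A_{i_0}}$ and setting $B_i=Z\setminus A_i$, you assert that ``the $\clos[\AS]{B_i}$ form an increasing sequence of closed subsets of $Z$ \emph{so they stabilize by noetherianity}.'' Noetherianity of the $\AS$-topology is the \emph{descending} chain condition on closed sets (equivalently the ascending chain condition on open sets); an \emph{increasing} chain of closed sets in a noetherian space need not stabilize --- think of $\{p_1\}\subset\{p_1,p_2\}\subset\cdots$ in the Zariski topology on a curve. So your $Z'$ need not exist, and the contradiction $Z\setminus Z'\subset\bigcap_i A_i$ does not follow. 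What you actually need to rule out is that the irreducible set $Z$ is a \emph{countable} union of proper closed $\AS$-subsets of smaller dimension, and no purely noetherian argument gives this: it requires a genuinely topological input over $\R$.

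That input is the Baire category theorem for the Euclidean topology, and this is how the paper closes the argument: working on an irreducible component $Z$ of the stabilized closure, each $A_l$ contains a dense $\AS$-open $U_l\subset Z$, and $\dim(Z\setminus U_l)<\dim Z$ forces $Z\setminus U_l$ to be Euclidean-closed with empty Euclidean interior in $Z$; since $Z$ is a closed subset of $\P^N_\R$, hence compact and therefore Baire, the countable union $\bigcup_{l\ge k}(Z\setminus U_l)$ has empty interior, so $\bigcap_{l\ge k}U_l\neq\varnothing$. Your overall reduction (stabilize the closures, pass to complements, use that a proper constructible complement has smaller dimension) matches the paper's setup, and your restriction to an irreducible $Z$ is also needed to make the dimension drop valid componentwise; but the stabilization step must be replaced by this Baire argument for the proof to be correct.
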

\begin{proof}
Recall that $\clos[\AS]{A}$ denotes the smallest closed $\AS$-set containing $A$. We have the following sequence which stabilizes by noetherianity of the $\AS$-topology:
$$\clos[\AS]{A_1}\supset \clos[\AS]{A_2}\supset\cdots\supset\clos[\AS]{A_k}=\clos[\AS]{A_{k+1}}=\cdots$$

Recall that $\AS$-sets are exactly the constructible subsets of projective spaces for the $\AS$-topology whose closed sets are the semialgebraic arc-symmetric sets in the sense of \cite{Kur88}. Hence $A_l=\cup_{\mathrm{finite}}(U_i\cap V_i)$ where $U_i$ is $\AS$-open, $V_i$ is $\AS$-closed and $U_i\cap V_i\neq\varnothing$. We may assume that the $V_i$'s are irreducible (up to spliting them) and that $\clos[\AS]{U_i\cap V_i}=V_i$ (up to replacing $V_i$ by $\clos[\AS]{U_i\cap V_i}$). Hence we obtain the following decomposition as a union of finitely many irreducible closed subsets $\clos[\AS]{A_k}=\cup V_i$ (it is not necessarily the irreducible decomposition since we may have $V_i\subset V_j$).

Fix $Z$ an $\AS$-irreducible subset of $\clos[\AS]{A_k}$. By the previous discussion, for $l\ge k$, there exists $U_l$ an open dense $\AS$-subset of $Z$ such that $U_l\subset A_l$.

By \cite[Remark 2.7]{Par04}, $\dim(Z\setminus U_l)<\dim U_l$ so that $Z\setminus U_l$ is a closed subset of $Z$ with empty interior for the Euclidean topology. From Baire theorem, we deduce that the Euclidean interior of $\cup_{l\ge k}Z\setminus U_l$ is empty. Hence $\cap_{l\ge k}U_l$ is non-empty.
\end{proof}

The following lemma is an adaptation to the real context of \cite[Theorem 6.6]{Bat98}.
\begin{lemma}\label{lem:finitesubcov}
Let $A\subset\L(X)$ be a stable set and $(C_i)_{i\in\N}$ be a family of stable sets such that $$A\subset\bigcup_{i\in\N}C_i$$
Then there exists $l\in\N$ such that $$A\subset\bigcup_{i=0}^lC_i$$
\end{lemma}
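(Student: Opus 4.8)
The plan is to reduce this compactness statement to the Baire-type Lemma~\ref{lem:ASBaire} by passing to a fixed finite-dimensional jet level. Say $A$ is stable at level $n_0$ and each $C_i$ is stable at level $n_i$; by enlarging indices we may assume all the relevant structure is visible. The key point is that stability means $A = \pi_m^{-1}(\pi_m(A))$ and $C_i = \pi_m^{-1}(\pi_m(C_i))$ for $m$ large, so the inclusion $A \subset \bigcup_i C_i$ is equivalent to an inclusion of $\AS$-subsets of the finite-dimensional jet space $\pi_m(\L(X))$ once we account for the fibers. Concretely, for each fixed $m \geq n_0$ one has $\pi_m(A) \subset \bigcup_i \pi_m(C_i)$; the reverse direction (that covering all the $m$-jets suffices to cover the arcs) uses precisely that both $A$ and the $C_i$ are cylinders over level $m$.

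First I would set, for $l \in \N$, the set $A_l = A \setminus \bigcup_{i=0}^l C_i$. Each $A_l$ is stable (by the proposition that stable sets are closed under finite unions and differences), and $(A_l)_{l \in \N}$ is a decreasing sequence whose intersection is $A \setminus \bigcup_{i} C_i = \varnothing$. The goal becomes: show some $A_l = \varnothing$. Suppose not, so every $A_l \neq \varnothing$. Now I want to push this down to a fixed jet level. The subtlety is that the level of stability of $A_l$ a priori grows with $l$. To handle this, I would argue level by level: fix $m \geq n_0$ and consider $\pi_m(A_l) \subset \pi_m(\L(X))$. Since $A$ is stable at level $n_0 \le m$ and removing the $C_i$'s only removes full fibers of $\pi_m$ when $m$ exceeds the stability level of those $C_i$, I would instead look at the sets $B_{l} := \pi_{m}(A) \setminus \bigcup_{i=0}^l \pi_{m}(C_i)$ for a suitable uniform $m$ — but since $m$ must dominate all $n_i$, this does not directly work for infinitely many $C_i$.

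The right move, following the structure of \cite[Theorem 6.6]{Bat98}, is a diagonal/dimension argument: for each $l$, let $m_l$ be a level at which $A_l$ is stable. Consider the decreasing sequence of non-empty $\AS$-sets obtained by truncating appropriately, and use that $\dim \pi_m(A_l) = (m+1)d + \dim(A_l)$ grows linearly in $m$ with the same slope $d$ for all $l$, while $A_{l+1} \subsetneq A_l$ forces (when the containment is strict at the level of arcs) either a drop in dimension of the cylinder base or a strictly smaller $\AS$-set at the same dimension. Since dimensions are bounded below by $0$ and $\AS$-sets satisfy the descending chain condition (noetherianity of the $\AS$-topology, as used in Lemma~\ref{lem:ASBaire}), an infinite strictly decreasing chain is impossible. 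More carefully: pick a single large $m$; the images $\pi_m(A_l)$ need not be decreasing in a naive way, so instead I would apply Lemma~\ref{lem:ASBaire} to a cleverly chosen decreasing sequence of $\AS$-sets, namely $\pi_{m_l}(A_l)$ viewed after pulling everything back to a common refinement, concluding $\bigcap_l A_l \neq \varnothing$ — contradicting $\bigcap_l A_l = \varnothing$. Hence some $A_l$ is empty, which is the assertion.

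The main obstacle is exactly the bookkeeping of stability levels: because each $C_i$ is stable only from its own level $n_i$ onward and these are unbounded, one cannot truncate the whole picture at a single finite level, so the naive ``intersect the $\pi_m(A_l)$ and apply Baire'' fails. The fix is to exploit that the obstruction to $\bigcap_l A_l \ne \varnothing$ is genuinely finite-dimensional: any arc $\gamma \in \bigcap_l A_l$ is detected at \emph{some} finite level, and one produces such a $\gamma$ by a limit argument — this is where Lemma~\ref{lem:ASBaire} enters, applied to the sets $\pi_{m}(A_l)$ for a value of $m$ chosen after noting that $A_l$ being a cylinder over level $n_0$ (since $A_l \subset A$ and $A$ is a cylinder over $n_0$) forces $A_l = \pi_{n_0}^{-1}(\pi_{n_0}(A_l))$, so in fact all $A_l$ are already stable at the fixed level $\max(n_0, n_i \text{ for the finitely many } i \le l)$, and the truly infinite part contributes nothing once we observe $A \setminus \bigcup_i C_i = \varnothing$ means every fiber of $\pi_{n_0}$ over $\pi_{n_0}(A)$ is eventually exhausted. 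I would make this precise by running Lemma~\ref{lem:ASBaire} on $\bigl(\pi_{n_0}(A_l)\bigr)_{l}$, a decreasing sequence of $\AS$-subsets of the finite-dimensional variety $\L_{n_0}(X)$; if all are non-empty their intersection is non-empty, yielding an $n_0$-jet lying in every $\pi_{n_0}(A_l)$, hence (lifting via cylinder-ness) an arc in every $A_l$, the desired contradiction.
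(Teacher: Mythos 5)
Your setup (pass to $D_l = A\setminus\bigcup_{i\le l}C_i$, assume all non-empty, aim for a contradiction via Lemma~\ref{lem:ASBaire}) matches the paper's, and you correctly identify the central difficulty: the stability levels $n_i$ of the $C_i$ are unbounded, so the whole picture cannot be truncated at a single finite level. But your proposed resolution of that difficulty is wrong. You claim that ``$A_l$ being a cylinder over level $n_0$ (since $A_l\subset A$ and $A$ is a cylinder over $n_0$) forces $A_l=\pi_{n_0}^{-1}(\pi_{n_0}(A_l))$.'' A subset of a cylinder at level $n_0$ is not itself a cylinder at level $n_0$: removing $C_i$ (a cylinder only from level $n_i>n_0$ on) from $A$ destroys the fiber structure over level $n_0$. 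Consequently your final step --- produce an $n_0$-jet in $\bigcap_l\pi_{n_0}(A_l)$ and ``lift via cylinder-ness'' to an arc in $\bigcap_l A_l$ --- does not go through: a single $n_0$-jet lying in every $\pi_{n_0}(A_l)$ gives, for each $l$ separately, some arc of $A_l$ over that jet, but not one arc lying in all $A_l$ simultaneously.

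The missing idea is an inductive (K\"onig/inverse-limit type) construction of a \emph{compatible tower of jets at all levels}. The paper first applies Lemma~\ref{lem:ASBaire} at level $n$ to get $u_n\in B_n=\bigcap_i\pi_n(D_i)$, then, at each subsequent level, applies Lemma~\ref{lem:ASBaire} again to the decreasing sequence $\bigl(\pi_{m+1}(D_i)\cap(\pi^{m+1}_m)^{-1}(u_m)\bigr)_i$ to choose $u_{m+1}\in B_{m+1}$ with $\pi^{m+1}_m(u_{m+1})=u_m$. The resulting arc $u$ satisfies $\pi_{n_i}(u)\in\pi_{n_i}(D_i)$ at the level $n_i$ where $D_i$ \emph{is} a cylinder, hence $u\in D_i$ for every $i$ --- that is where the unbounded stability levels are actually absorbed. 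Two further points your sketch leaves unaddressed and the paper must justify: why $\pi_n(D_i)$ is an $\AS$-set for $n$ below the stability level of $D_i$ (it is the image of $\pi_{n_i}(D_i)$ under a map whose fibers have odd Euler characteristic with compact support, invoking \cite[Theorem 4.3]{Par04}), and why $(\pi^{m+1}_m)^{-1}(u_m)$ meets the intersection (again Lemma~\ref{lem:ASBaire}, applied to the sets intersected with that fiber). Your middle paragraph's alternative via dimension counts and descending chains is not developed into an argument and, as stated, does not substitute for the tower construction.
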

\begin{proof}
Without loss of generality, we may assume that $C_i\subset A$ (up to replacing $C_i$ by $C_i\cap A$).

Set $D_i=A\setminus\left(C_1\cup\cdots\cup C_i\right)$ so that we get a decreasing sequence of stable sets $$D_1\supset D_2\supset D_3\supset\cdots$$ satisfying $$\bigcap_{i\in\N} D_i=\varnothing$$

Assume by contradiction that $A$ may not be covered by finitely many $C_i$, then $$\forall i\in\N, D_i\neq\varnothing$$

Now assume that $A$ is stable at level $n$ and that $D_i$ is stable at level $n_i\ge n$. Then $\pi_n(D_i)=\pi^{n_i}_n(\pi_{n_i}(D_i))\in\AS$ as the image of an $\AS$-set by a regular map whose fibers have odd Euler characteristic with compact support, see \cite[Theorem 4.3]{Par04}. Hence, by Lemma \ref{lem:ASBaire}, $$B_n=\bigcap_{i\in\N}\pi_n(D_i)\neq\varnothing$$
Choose $u_n\in B_n$.

Now set $$B_{n+1}=\bigcap_{i\in\N}\pi_{n+1}(D_i)\neq\varnothing$$ As before each $\pi_{n+1}(D_i)$ is a non-empty $\AS$-set. Notice that $(\pi^{n+1}_n)^{-1}(u_n)$ is a non-empty $\AS$-subset of $\L_{n+1}(X)$. Then, by Lemma \ref{lem:ASBaire}, $B_{n+1}\cap(\pi^{n+1}_n)^{-1}(u_n)\neq\varnothing$. This way, there exists $u_{n+1}\in B_{n+1}$ such that $\pi^{n+1}_{n}(u_{n+1})=u_n$.

Therefore, we may inductively construct a sequence $\left(u_m\in\L_m(X)\right)_{m\ge n}$ such that:
\begin{itemize}
\item $\displaystyle u_m\in B_m=\bigcap_{i\in\N}\pi_{m}(D_i)\neq\varnothing$;
\item $\pi^{m+1}_m(u_{m+1})=u_m$.
\end{itemize}

This defines an element $u\in\L(X)$ such that for all $m\ge n$, $\pi_m(u)\in B_m$. Hence for $i\in\N$, $\pi_{n_i}(u)\in B_{n_i}\subset\pi_{n_i}(D_i)$. Since $D_i$ is stable at level $n_i$, $u\in\pi_{n_i}^{-1}(\pi_{n_i}(D_i))=D_i$.

Therefore $u\in\bigcap D_i$ which is a contradiction.
\end{proof}

\begin{proof}[Proof of Proposition \ref{prop:Mlimit}]
We first prove that the limit is well defined. Let $A_m,C_{m,i}$ be as in the definition. Then for $m_1,m_2\in\Z_{<0}$, $$A_{m_1}\Delta A_{m_2}\subset\bigcup_{i\in\N}(C_{m_1,i}\cup C_{m_2,i})$$
By Lemma \ref{lem:finitesubcov}, there exists $l\in\N$ such that $$A_{m_1}\Delta A_{m_2}\subset\bigcup_{i=0}^l(C_{m_1,i}\cup C_{m_2,i})$$ hence $\dim(A_{m_1}\Delta A_{m_2})\le\max(m_1,m_2)$. Thus $\mu(A_m)$ is a Cauchy sequence and its limit is well defined in the completion $\widehat\M$.
\ \\

We now check that the limit doesn't depend on the choices. Let $A_m',C_{m,i}'$ be another choice of data for the measurability of $A$. Fix $m\in\Z_{<0}$ then $$A_m\Delta A_m'\subset\bigcup_{i\in\N}(C_{m,i}\cup C_{m,i}')$$
By Lemma \ref{lem:finitesubcov}, there exists $l\in\N$ such that $$A_m\Delta A_m'\subset\bigcup_{i=0}^l(C_{m,i}\cup C_{m,i}')$$

Hence $\dim(A_m\Delta A_m')<m$ and $\displaystyle\lim_{m\to-\infty}\mu(A_m)=\lim_{m\to-\infty}\mu(A_m')$.
\end{proof}

\begin{prop}
If $A,B$ are measurable subsets of $\L(X)$, then $A\cup B$, $A\cap B$ and $A\setminus B$ are measurable too.
\end{prop}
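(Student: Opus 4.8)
The plan is to reduce the statement to the already established fact that stable subsets of $\L(X)$ are closed under the Boolean operations, combined with the elementary behaviour of the symmetric difference under those operations. Recall that for arbitrary subsets $A,A',B,B'$ of a set one has
$$(A\cup B)\,\Delta\,(A'\cup B')\subset(A\,\Delta\,A')\cup(B\,\Delta\,B'),$$
$$(A\cap B)\,\Delta\,(A'\cap B')\subset(A\,\Delta\,A')\cup(B\,\Delta\,B'),$$
$$(A\setminus B)\,\Delta\,(A'\setminus B')\subset(A\,\Delta\,A')\cup(B\,\Delta\,B'),$$
each checked by a one-line case distinction according to which of the two ``sides'' an element of the left-hand side belongs to.

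Now let $A,B\subset\L(X)$ be measurable and fix $m\in\Z_{<0}$. Using Definition \ref{defn:measurable}, choose a stable set $A_m$ and stable sets $(C_{m,i})_{i\ge0}$ with $\dim C_{m,i}<m$ for all $i$ and $A\,\Delta\,A_m\subset\bigcup_iC_{m,i}$, and likewise a stable set $B_m$ and stable sets $(D_{m,i})_{i\ge0}$ with $\dim D_{m,i}<m$ for all $i$ and $B\,\Delta\,B_m\subset\bigcup_iD_{m,i}$. Let $\ast$ denote any one of the operations $\cup$, $\cap$, $\setminus$. Then $A_m\ast B_m$ is stable, stable subsets of $\L(X)$ being closed under $\cup$, $\cap$ and $\setminus$; the countable family obtained by interleaving $(C_{m,i})_i$ and $(D_{m,i})_i$ into a single sequence consists of stable sets of virtual dimension $<m$; and the relevant inclusion above yields
$$(A\ast B)\,\Delta\,(A_m\ast B_m)\subset(A\,\Delta\,A_m)\cup(B\,\Delta\,B_m)\subset\Big(\bigcup_iC_{m,i}\Big)\cup\Big(\bigcup_iD_{m,i}\Big).$$
As $m\in\Z_{<0}$ was arbitrary, this supplies exactly the data required by Definition \ref{defn:measurable}, so $A\ast B$ is measurable, and by Proposition \ref{prop:Mlimit} its measure is $\lim_{m\to-\infty}\mu(A_m\ast B_m)$.

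No genuine obstacle arises: all the content sits in the two ingredients invoked above — closure of stable sets under $\cup$, $\cap$, $\setminus$, and the description of a measurable set as one approximated, for every target dimension $m$, by a stable set up to a countable union of stable sets of dimension $<m$ — so only bookkeeping remains, namely re-indexing the two negligible families as a single sequence and recording the three symmetric-difference inclusions. The one point to be careful about is to argue directly with all three operations rather than passing through complements and de Morgan's laws, since $\L(X)$ itself need not be stable (and is not a priori known to be measurable) when $X$ is singular.
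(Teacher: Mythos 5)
Your proof is correct and follows essentially the same strategy as the paper: approximate $A\ast B$ by the stable set $A_m\ast B_m$ (stable sets being closed under the Boolean operations) and absorb the error into the union of the two negligible families $(C_{m,i})_i$ and $(D_{m,i})_i$. The only difference is cosmetic: you verify all three operations uniformly via the elementary symmetric-difference inclusions, whereas the paper treats $\cup$ directly, reduces $\setminus$ to the nested case $B\subset A$, and then writes $A\cap B$ in terms of the other two; your uniform treatment is, if anything, slightly cleaner.
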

\begin{proof}
Assume that $A$ and $B$ are measurable, respectively with the data $A_m,C_{m,i}$ and $B_m,D_{m,i}$.
\begin{itemize}
\item $A\cup B$ is measurable since $$(A\cup B)\Delta(A_m\cup B_m)\subset\bigcup (C_{m,i}\cup D_{m,i})$$
\item In order to prove that $A\setminus B$ is measurable, we may use the previous point and assume that $B\subset A$ up to replacing $A$ by $A\cup B$. Similarly, we may assume that $B_m\subset A_m$. Then $$(A\setminus B)\Delta(A_m\setminus B_m)\subset\bigcup C_{m,i}\cup D_{m,i}$$
\item Using both previous points, we obtain that $A\cap B=(A\cup B)\setminus\left(((A\cup B)\setminus A)\cup((A\cup B)\setminus B)\right)$ is measurable.
\end{itemize}
\end{proof}

\begin{prop}
The measure is additive on disjoint unions: $$\mu(A\sqcup B)=\mu(A)+\mu(B)$$
\end{prop}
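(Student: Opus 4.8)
The plan is to reduce to the stable case and then pass to the limit. First I would treat disjoint \emph{stable} sets $A,B\subset\L(X)$. Choosing $n$ large enough that $A$, $B$ and (by the earlier stability proposition) $A\sqcup B$ are all stable at level $n$ with $A=\pi_n^{-1}(\pi_n(A))$ and $B=\pi_n^{-1}(\pi_n(B))$, I claim $\pi_n(A)\cap\pi_n(B)=\varnothing$: a common point $\gamma$ would lift to some $\beta\in B$ with $\pi_n(\beta)=\gamma\in\pi_n(A)$, hence $\beta\in\pi_n^{-1}(\pi_n(A))=A$, contradicting $A\cap B=\varnothing$. Thus $\pi_n(A\sqcup B)=\pi_n(A)\sqcup\pi_n(B)$ and, dividing the additivity relation $[\pi_n(A)\sqcup\pi_n(B)]=[\pi_n(A)]+[\pi_n(B)]$ in $K_0(\AS)$ by $\LL^{(n+1)d}$, we obtain $\mu(A\sqcup B)=\mu(A)+\mu(B)$. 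Applying the same computation to $A_m\cup B_m=A_m\sqcup(B_m\setminus A_m)$ together with $B_m=(A_m\cap B_m)\sqcup(B_m\setminus A_m)$ yields the inclusion--exclusion formula $\mu(A_m\cup B_m)=\mu(A_m)+\mu(B_m)-\mu(A_m\cap B_m)$ for \emph{any} two stable sets $A_m,B_m$.

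Next, for measurable $A,B$ with $A\cap B=\varnothing$, fix data $(A_m,(C_{m,i})_i)$ witnessing the measurability of $A$ and $(B_m,(D_{m,i})_i)$ witnessing that of $B$. As in the proof that a union of measurable sets is measurable, $(A_m\cup B_m,(C_{m,i}\cup D_{m,i})_i)$ is admissible data for $A\cup B$, so $\mu(A\cup B)=\lim_{m\to-\infty}\mu(A_m\cup B_m)$, and it remains to control the overlap term $\mu(A_m\cap B_m)$. A point of $A_m\cap B_m$ either lies in $A$ or it does not: if it does, then since $A\cap B=\varnothing$ it lies in $B_m\setminus B\subset B\Delta B_m\subset\bigcup_i D_{m,i}$; if it does not, it lies in $A_m\setminus A\subset A\Delta A_m\subset\bigcup_i C_{m,i}$. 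Hence the stable set $A_m\cap B_m$ is covered by the countable family of stable sets $(C_{m,i})_i\cup(D_{m,i})_i$, and by Lemma~\ref{lem:finitesubcov} it is already covered by finitely many of them; since each has virtual dimension $<m$, this forces $\dim(A_m\cap B_m)<m$, i.e. $\mu(A_m\cap B_m)\in\mathcal F^{-m+1}\M$.

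Finally I would assemble the pieces: from $\mu(A_m\cup B_m)=\mu(A_m)+\mu(B_m)-\mu(A_m\cap B_m)$ and $\mu(A_m\cap B_m)\to0$ in $\widehat\M$ (because $-m+1\to+\infty$), letting $m\to-\infty$ gives $\mu(A\cup B)=\mu(A)+\mu(B)$; since $A\cap B=\varnothing$ we have $A\sqcup B=A\cup B$, which is the assertion. I expect the only genuine subtlety to be the estimate on $A_m\cap B_m$: one must track carefully the set-theoretic inclusions relating $A_m\cap B_m$ to the symmetric differences $A\Delta A_m$ and $B\Delta B_m$, and then invoke Lemma~\ref{lem:finitesubcov} to replace the a priori infinite cover of $A_m\cap B_m$ by its negligible pieces with a finite subcover, so that the dimension bound on the $C_{m,i}$ and $D_{m,i}$ transfers to $A_m\cap B_m$.
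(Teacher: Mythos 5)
Your proof is correct and follows essentially the same route as the paper's: approximate $A$ and $B$ by stable sets, use inclusion--exclusion $\mu(A_m\cup B_m)=\mu(A_m)+\mu(B_m)-\mu(A_m\cap B_m)$, and show the overlap term tends to $0$ because $A\cap B=\varnothing$. You merely make explicit two points the paper leaves implicit, namely additivity of $\mu$ on disjoint stable sets via $\pi_n(A)\cap\pi_n(B)=\varnothing$, and the dimension bound $\dim(A_m\cap B_m)<m$ obtained from Lemma~\ref{lem:finitesubcov}; both are handled correctly.
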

\begin{proof}
According to the previous proof we have
$$\mu(A\sqcup B)=\lim_{m\to\infty}\left(\mu(A_m)+\mu(B_m)-\mu(A_m\cap B_m)\right)$$
and $$0=\mu(A\cap B)=\lim_{m\to\infty}\mu(A_m\cap B_m)$$
Hence $$\mu(A\sqcup B)=\lim_{m\to\infty}\mu(A_m)+\lim_{m\to\infty}\mu(B_m)=\mu(A)+\mu(B)$$
\end{proof}

\begin{prop}\label{prop:measurableseries}
Let $(B_n)_{n\in\N}$ be a sequence of measurable sets with $\dim B_n\rightarrow-\infty$. \\
Then $B=\cup B_n$ is measurable and $$\mu(B)=\lim_{n\to+\infty}\mu\left(\bigcup_{k\le n}B_k\right).$$
Furthermore, if the sets $B_n$ are pairwise disjoint, then $$\mu(B)=\sum_{n=0}^{\infty}\mu\left(B_k\right).$$
\end{prop}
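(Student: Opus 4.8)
The plan is to reduce everything to the partial-union sets $\bigcup_{k\le n}B_k$, each of which is measurable by the already established stability of measurable sets under finite unions. First I would check that $B=\bigcup_n B_n$ is measurable. Fix $m\in\Z_{<0}$. Since $\dim B_n\to-\infty$, there is $n_0$ with $\dim B_n<m$ for all $n\ge n_0$; set $B^{(m)}=\bigcup_{k\le n_0}B_k$, which is measurable, so choose stable approximating data $A_m\subset\L(X)$ and stable sets $(C_{m,i})_{i\ge0}$ with $\dim C_{m,i}<m$ and $B^{(m)}\Delta A_m\subset\bigcup_i C_{m,i}$. Then $B\Delta A_m\subset(B^{(m)}\Delta A_m)\cup\bigcup_{k>n_0}B_k$, and each $B_k$ with $k>n_0$ is itself measurable with data of virtual dimension $<m$; unwinding the measurability of those finitely-many-at-a-time tails into their own stable approximations shows $B\Delta A_m$ is contained in a countable union of stable sets of virtual dimension $<m$ (absorbing the approximating stable part of each $B_k$ into the family). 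Hence $B$ is measurable.

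Next I would identify the limit. For each $m$ the defining data above satisfy $B\Delta A_m\subset\bigcup_i(\text{stable, }\dim<m)$ and also $\left(\bigcup_{k\le n}B_k\right)\Delta A_m\subset\bigcup_i(\text{stable, }\dim<m)$ once $n\ge n_0(m)$, because $\left(\bigcup_{k\le n}B_k\right)\Delta B\subset\bigcup_{k>n_0}B_k$ has all pieces of dimension $<m$. Therefore $\dim\!\left(\mu(B)-\mu\!\left(\bigcup_{k\le n}B_k\right)\right)<m$ for $n$ large, which is exactly the statement that $\mu\!\left(\bigcup_{k\le n}B_k\right)\to\mu(B)$ in $\widehat\M$. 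The disjoint case then follows from finite additivity: $\mu\!\left(\bigcup_{k\le n}B_k\right)=\sum_{k=0}^n\mu(B_k)$ when the $B_k$ are pairwise disjoint, and since $\dim\mu(B_k)=\dim B_k\to-\infty$ the series $\sum_k\mu(B_k)$ converges in $\widehat\M$ to the same limit.

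The main obstacle I expect is the bookkeeping in the measurability step: one must verify that the symmetric difference $B\Delta A_m$ is genuinely contained in a \emph{countable} union of stable sets each of virtual dimension $<m$, rather than in something merely of dimension $<m$ that is not a priori a countable union of stable pieces. This is handled by noting that each $B_k$ ($k>n_0$) is measurable at level $m$, so comes with its own stable set $A_{k,m}$ and stable family $(C_{k,m,i})_i$ with $\dim<m$; then $B_k\subset A_{k,m}\cup\bigcup_i C_{k,m,i}$, and since $\dim A_{k,m}=\dim\mu(A_{k,m})$ can also be taken $<m$ (as $\dim B_k<m$ forces $\dim A_{k,m}<m$ for the chosen data), the whole tail $\bigcup_{k>n_0}B_k$ lies in a countable union of stable sets of virtual dimension $<m$. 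Collecting these with the $C_{m,i}$ from $B^{(m)}$ gives the required countable stable cover, and re-indexing makes it a single sequence as demanded by Definition \ref{defn:measurable}.
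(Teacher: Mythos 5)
Your proposal is correct and follows essentially the same route as the paper: for each $m$ you approximate the head $\bigcup_{k\le n_0}B_k$ by a stable set and absorb the tail $\bigcup_{k>n_0}B_k$ into the countable family of stable sets of virtual dimension $<m$ by unwinding the measurability data $A_{k,m}$, $C_{k,m,i}$ of each $B_k$ (the paper takes $A_m=\bigcup_{k<N}A_{k,m}$ directly, which amounts to the same thing). The paper dismisses the limit and disjoint-sum statements with ``the other properties follow easily,'' so your extra detail there is consistent with, and slightly more explicit than, the original.
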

\begin{proof}
By Definition \ref{defn:measurable} for each $n\in \N$ and $m\in \Z_{< 0}$ there are stable sets $A_{n,m}$ and $C_{n,m,i}$, $\dim C_{n,m,i} < m$ such that $$B_n \Delta A_{n,m} \subset \bigcup_{i} C_{n,m,i}. $$
For $m\in \Z_{< 0}$ choose $N\in\N$ such that if $n\ge N$ then $\dim B_n <m$. \\
Note that then $\dim A_{n,m}<m$. Let us set $A_m=\displaystyle\bigcup_{k<N}A_{k,m}$. Then $$\bigcup_n B_n\Delta A_m\subset\bigcup_{n,i}C_{n,m,i}\cup\bigcup_{n\ge N}A_{n,m}.$$
This shows that $B$ is measurable. The other properties follows easily.
\end{proof}

\subsection{Measurability of the cylinders}
\begin{lemma}\label{lem:dimLS}
Let $X$ be a $d$-dimensional algebraic subset of $\R^N$. Let $S\subset X$ be an algebraic subset of $X$ with $\dim S<d$. For every $e\in\N$, there exists $N\in\N$ such that $$\forall i,n\in\N,\,n\ge i\ge N\Rightarrow \dim\left(\pi_n\left(\pi_i^{-1}\left(\L_i(S)\right)\right)\right)\le(n+1)d-e-1$$ where $\pi_n$ denotes the $n$-th truncation map for $X$ $$\forall n\in\N,\,\pi_n:\L(X)\rightarrow\L_n(X)$$ and where $\L(S)\subset\L(X)$ and $\forall i\in\N,\,\L_i(S)\subset\L_i(X)$.
\end{lemma}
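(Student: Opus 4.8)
The plan is to reduce the statement to a uniform bound on the dimensions of the jet spaces of the lower-dimensional subset $S$, and then to propagate this bound through the truncation maps using the fiber-dimension estimate of Proposition \ref{item:truncfibers}. First I would observe that $\pi_i^{-1}(\L_i(S))\subset\L(X)$ consists of arcs on $X$ whose $i$-jet lies on $S$; for such an arc $\gamma$, all the truncations $\pi_j(\gamma)$ with $j\le i$ lie in $\L_j(S)$, and for $j\ge i$ the image $\pi_j(\pi_i^{-1}(\L_i(S)))$ is contained in $(\pi^j_i)^{-1}(\L_i(S))\cap\pi_j(\L(X))$. So it suffices to bound $\dim\bigl((\pi^n_i)^{-1}(\L_i(S))\cap\pi_n(\L(X))\bigr)$. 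The natural way to do this is to stratify according to the $i$-jet: the map $\pi^n_i$ restricted to this set has image inside $\L_i(S)$, and its fibers — being fibers of $\pi^n_i$ restricted to $\pi_n(\L(X))$ over points of $\pi_i(\L(X))$, or subsets thereof — have dimension at most $(n-i)d$ by Proposition \ref{item:truncfibers}. Hence
\[
\dim\bigl(\pi_n(\pi_i^{-1}(\L_i(S)))\bigr)\le\dim\L_i(S)+(n-i)d.
\]

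The crux is therefore to bound $\dim\L_i(S)$ for $i$ large. Since $\dim S=:e'<d$, the jet space $\L_i(S)$ sits inside $\R^{N(i+1)}$ and one expects $\dim\L_i(S)\le (i+1)e' + (\text{bounded correction})$, the correction coming from the singular locus of $S$ and measured by a Greenberg-type constant for $I(S)$. Concretely, I would invoke Theorem \ref{thm:greenberg} (and its variant for the singular locus noted in Remark \ref{rem:singarcs}) together with Proposition \ref{prop:dimfibers}: the locus $\pi_i(\L(S))$ has dimension controlled by an affine-linear function of $i$ with slope $\dim S$, while the discrepancy between $\L_i(S)$ and $\pi_i(\L(S))$ is confined over $S_\sing$ and over the part of $\L_i(S)$ detected by $H_S$, and this discrepancy contributes a slope strictly less than $d$ as well. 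Choosing $i\ge N$ large enough makes the affine constant negligible compared to the linear term, so that for some $N$,
\[
\dim\L_i(S)\le (i+1)d-e-1-(n-i)d\quad\text{whenever } n\ge i\ge N
\]
can be arranged; more precisely, writing $\dim\L_i(S)\le (i+1)e' + K$ with $K$ depending only on $I(S)$, and using $e'\le d-1$, we get $\dim\L_i(S)+(n-i)d\le (n+1)d - (i+1) + K$, which is $\le (n+1)d-e-1$ as soon as $i\ge N:=e+K$.

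The main obstacle I anticipate is the second step: obtaining a genuinely uniform (in $i$) bound of the form $\dim\L_i(S)\le (i+1)\dim S + K$ with $K$ independent of $i$. The naive estimate $\dim\L_i(S)\le\dim\L_i(X)\cap(\text{jets through }S)$ is too weak because $\L_i(S)$ can be larger than $\pi_i(\L(S))$ when $S$ is singular — exactly the phenomenon flagged in Remark after Proposition \ref{prop:dimfibers} and in \cite[Example 2.30]{Cam16}. Handling this requires applying Greenberg's theorem to $S$ (so that $\pi_i(\L(S))=\pi^{ci}_i(\L_{ci}(S))$ for a constant $c$ depending only on $I(S)$) and separately controlling the "non-liftable" jets supported over $S_\sing$ by induction on dimension, using $\dim S_\sing<\dim S$. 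One then assembles these pieces additively, which is legitimate since dimension is subadditive over finite unions of $\AS$- or semialgebraic sets. The rest of the argument — the fiber-dimension bookkeeping and the final arithmetic in $i,n,e$ — is routine.
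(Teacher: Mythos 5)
Your first reduction is fine, but the pivot of your argument --- a uniform bound $\dim\L_i(S)\le(i+1)\dim S+K$ with $K$ independent of $i$ --- is false in general, and this is a genuine gap. Take $X=\R^3$, $d=3$, and $S=V(x^3+y^4+z^5)$, so $\dim S=2<d$ and $I(S)=(x^3+y^4+z^5)$ (the polynomial is irreducible and changes sign). Every jet $(x(t),y(t),z(t))$ with $\ord_tx\ge\lceil(i+1)/3\rceil$, $\ord_ty\ge\lceil(i+1)/4\rceil$, $\ord_tz\ge\lceil(i+1)/5\rceil$ lies in $\L_i(S)$, and this linear subspace has dimension roughly $(i+1)\left(3-\tfrac{1}{3}-\tfrac{1}{4}-\tfrac{1}{5}\right)=\tfrac{133}{60}(i+1)$, whose excess over $(i+1)\dim S=2(i+1)$ grows linearly in $i$; no constant $K$ and no choice of $N$ rescue the arithmetic at the end of your argument. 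Your proposed repair does not close this gap: Greenberg's theorem controls the liftable jets $\pi_i(\L(S))$, whereas the offending jets above are precisely non-liftable ones, and they are not elements of $\L_i(S_\sing)$ but merely jets of $S$ \emph{centered} at $S_\sing$, so an induction on $\dim S_\sing$ has no grip on them. (In this particular example the lemma still holds because $\tfrac{133}{60}<3=d$, but that is an accident of the example, not a consequence of your estimate.)

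The paper's proof never bounds $\dim\L_i(S)$; it truncates down to level $e$ rather than level $i$. Fix $c$ from Theorem \ref{thm:greenberg} applied to $S$ and set $N=ce$. For $i\ge N$ one has $\pi_i^{-1}(\L_i(S))\subset\pi_{ce}^{-1}(\L_{ce}(S))$, so it suffices to bound $\pi_n\left(\pi_{ce}^{-1}(\L_{ce}(S))\right)$. One fibers this set over level $e$ via $\pi^n_e$: the fibers contribute at most $(n-e)d$ by Proposition \ref{item:truncfibers}, and the base equals $\pi_e(\L(S))$, because by Greenberg a $ce$-jet on $S$ truncated to level $e$ lifts to an honest arc on $S$. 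The dimension of $\pi_e(\L(S))$ is at most $(e+1)\dim S\le(e+1)(d-1)$ by \cite[Proposition 2.33.(i)]{Cam16} --- a bound valid for images of arc spaces, unlike for the full jet scheme $\L_e(S)$. Adding up gives $(e+1)(d-1)+(n-e)d=(n+1)d-e-1$. The saving of $e+1$ thus comes from constraining the first $e+1$ Taylor coefficients to lie on $e$-jets of genuine arcs of $S$, not from a dimension count of $\L_i(S)$.
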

\begin{proof}
By Greenberg Theorem \ref{thm:greenberg} applied to $S$, there exists $c\in\N$ such that $$\pi_e\left(\pi_{ce}^{-1}\left(\L_{ce}(S)\right)\right)=\pi_e\left(\L(S)\right)$$
Let $N=ce$ and $n\ge N$. By \ref{item:truncfibers} applied to $$\pi_n\left(\pi_{ce}^{-1}\left(\L_{ce}(S)\right)\right)\rightarrow\pi_e\left(\pi_{ce}^{-1}\left(\L_{ce}(S)\right)\right)$$ we get that $$\dim\left(\pi_n\left(\pi_{ce}^{-1}\left(\L_{ce}(S)\right)\right)\right)\le\dim\left(\pi_e\left(\pi_{ce}^{-1}\left(\L_{ce}(S)\right)\right)\right)+(n-e)d$$
But $$\pi_e\left(\pi_{ce}^{-1}\left(\L_{ce}(S)\right)\right)=\pi_e\left(\L(S)\right)$$ so that (see \cite[Proposition 2.33.(i)]{Cam16})$$\dim\left(\pi_n\left(\pi_{ce}^{-1}\left(\L_{ce}(S)\right)\right)\right)\le(e+1)(d-1)+(n-e)d=(n+1)-e-1$$
Now if $n\ge i\ge N(=ce)$, the result derives from the inclusion $$\pi_n\left(\pi_i^{-1}\left(\L_i(S)\right)\right)\subset\pi_n\left(\pi_{ce}^{-1}\left(\L_{ce}(S)\right)\right)$$
\end{proof}

\begin{defn}
Let $X\subset\R^N$ be an algebraic subset. For $i\in\Np$, we set $$C_i(X)=\L^{(i)}(X)\setminus\L^{(i-1)}(X).$$
\end{defn}

\begin{rem}
$\displaystyle C_i(X)=\left\{\gamma\in\L(X),\,\forall h\in H_X,\,\ord_th\circ\gamma\ge i,\,\exists\tilde h\in H_X,\,\ord_t\tilde h\circ\gamma=i\right\}$
\end{rem}

\begin{prop}\label{prop:dimCi}
For $i\in\Np$, $C_i(X)$ is stable and $$\lim_{i\to+\infty}\dim C_i(X)=-\infty$$
\end{prop}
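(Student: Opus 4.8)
The plan is to prove the two assertions separately, using the machinery developed in Proposition~\ref{prop:ptf} together with the dimension estimate of Lemma~\ref{lem:dimLS}.

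First I would show that $C_i(X)$ is stable for each $i\in\Np$. Recall that $C_i(X)=\L^{(i)}(X)\setminus\L^{(i-1)}(X)$ and that, by definition, $\gamma\in C_i(X)$ precisely when $\ord_t h\circ\gamma\ge i$ for every $h\in H_X$ while $\ord_t\tilde h\circ\gamma=i$ for some $\tilde h\in H_X$. The key observation is that this condition is testable at level $i$: whether $\ord_t h\circ\gamma\ge i$ and whether some $\ord_t\tilde h\circ\gamma=i$ depend only on $\pi_i(\gamma)$, and moreover if $\pi_i(\eta)=\pi_i(\gamma)$ with $\eta\in\L(X)$ then $\eta\in C_i(X)$ iff $\gamma\in C_i(X)$; this is the same reasoning as in the proof of Proposition~\ref{prop:ptf} for the sets $A_{\mathbf f,h,\delta,e'}$. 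Hence $C_i(X)=\pi_m^{-1}(\pi_m(C_i(X)))$ for all $m\ge i$, which gives the second defining property of stability. For the first and third properties I would use Proposition~\ref{prop:ptf}: since $C_i(X)\subset\L^{(i)}(X)$, for $n\ge i$ the set $\pi_n(C_i(X))$ is obtained from $\pi_n(\L^{(i)}(X))\in\AS$ by removing the image of $\L^{(i-1)}(X)$, which is again an $\AS$-set (being $\pi_n(\L^{(i-1)}(X))$ for $n\ge i-1$, by Proposition~\ref{prop:ptf}(i)); so $\pi_n(C_i(X))$ is a difference of two $\AS$-sets, hence $\AS$. The piecewise-trivial fibration property with fiber $\R^d$ for $\pi^{n+1}_n:\pi_{n+1}(C_i(X))\to\pi_n(C_i(X))$ is inherited from the corresponding statement in Proposition~\ref{prop:ptf}(ii) for $\L^{(i)}(X)$, restricted to the $\AS$-subset $\pi_n(C_i(X))\subset\pi_n(\L^{(i)}(X))$ (the restriction of a piecewise trivial fibration to an $\AS$-subset of the base is again one).

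Next I would estimate $\dim C_i(X)$. By definition $\dim C_i(X)=\dim\pi_n(C_i(X))-(n+1)d$ for $n\gg1$, so it suffices to bound $\dim\pi_n(C_i(X))$. The point is that every $\gamma\in C_i(X)$ satisfies $h(\gamma(t))\equiv0\bmod t^i$ for all $h\in H_X$, i.e.\ $\pi_{i-1}(\gamma)\in\L_{i-1}(X_\sing)$ in the notation of Remark~\ref{rem:singarcs}; equivalently $\gamma$ agrees to order $i-1$ with an arc whose $(i-1)$-jet lies in $\L_{i-1}(S)$ for the algebraic set $S=X_\sing$ of dimension $<d$. Thus $\pi_n(C_i(X))\subset\pi_n(\pi_{i-1}^{-1}(\L_{i-1}(X_\sing)))$ for $n\ge i-1$, and Lemma~\ref{lem:dimLS} (applied with $e$ chosen so that the conclusion forces the bound below, and with $N$ the integer it produces) gives, for $i-1\ge N$ and $n\ge i-1$,
$$\dim\pi_n(C_i(X))\le(n+1)d-e-1,$$
hence $\dim C_i(X)\le -e-1$. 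Since $e$ was arbitrary and $N$ depends only on $e$ (and $X$), for every fixed $e$ we have $\dim C_i(X)\le -e-1$ for all $i$ large enough; this is exactly the statement $\dim C_i(X)\to-\infty$ as $i\to+\infty$.

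The main obstacle is the bookkeeping in the second part: one must be careful that the ``$n\gg1$'' in the definition of $\dim C_i(X)$ is compatible with the range $n\ge i-1\ge N$ coming from Lemma~\ref{lem:dimLS}, and that the jets in $C_i(X)$ really do land in $\pi_{i-1}^{-1}(\L_{i-1}(X_\sing))$ — this uses that the defining condition of $C_i(X)$ kills $H_X$ modulo $t^i$, combined with the remark in~\ref{rem:singarcs} that Greenberg's theorem applies to $\L(X_\sing)$ via the ideal $H_X$. The first part (stability) is essentially a localization of Proposition~\ref{prop:ptf} and poses no real difficulty beyond checking that the level-$i$ condition cutting out $C_i(X)$ from $\L^{(i)}(X)$ is a finite Boolean combination of $\AS$-conditions stable under the truncation maps.
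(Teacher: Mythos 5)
Your proof is correct and follows essentially the same route as the paper's: stability of $C_i(X)=\L^{(i)}(X)\setminus\L^{(i-1)}(X)$ comes from Proposition \ref{prop:ptf} (plus closure of stable sets under differences), and the dimension bound comes from the inclusion $\pi_{i-1}(C_i(X))\subset\L_{i-1}(X_\sing)$ together with Lemma \ref{lem:dimLS} applied to $X_\sing$ via the ideal $H_X$ and Greenberg's theorem. Your extra care about the compatibility of the range $n\ge i-1\ge N$ with the ``$n\gg1$'' in the definition of the virtual dimension is a correct elaboration of a point the paper leaves implicit.
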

\begin{proof}
Fix some $i\in\Np$. First, $C_i(X)$ is stable at level $i$ since the $\L^{(e)}(X)$ are stable by Proposition \ref{prop:ptf}.

Notice that $\pi_{i-1}(C_i(X))\subset\L_{i-1}(X_\sing)$. Hence $$C_i(X)\subset\pi_{i-1}^{-1}(\pi_{i-1}(C_i(X)))\subset\pi_{i-1}^{-1}\left(\L_{i-1}(X_\sing)\right)$$ and then $$\pi_i(C_i(X))\subset\pi_i\left(\pi_{i-1}^{-1}\left(\L_{i-1}(X_\sing)\right)\right).$$

As explained in Remark \ref{rem:singarcs}, we may apply Greenberg Theorem \ref{thm:greenberg} to $H_X$ so that Lemma \ref{lem:dimLS} holds for $X_\sing$.

Hence, for all $e\in\N$, there exists $N\in\N$ so that for $i\ge N$ we have $$\dim\left(\pi_i(C_i(X))\right)-(i+1)d\le\dim\left(\pi_i\left(\pi_{i-1}^{-1}\left(\L_{i-1}(X_\sing)\right)\right)\right)-(i+1)d\le -e$$
\end{proof}

\begin{cor}\label{cor:measALe}
A subset $A\subset\L(X)$ is measurable if and only if $\forall e\gg0,\, A\cap\L^{(e)}(X)$ 
is measurable.
\end{cor}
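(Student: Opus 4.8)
The plan is to establish the equivalence in Corollary~\ref{cor:measALe} by exploiting the decomposition $\L(X)=\left(\bigcup_{e\in\N}\L^{(e)}(X)\right)\sqcup\L(X_\sing)$ together with the fact that the ``error terms'' $C_i(X)$ have virtual dimension tending to $-\infty$ (Proposition~\ref{prop:dimCi}), so that restricting to $\L^{(e)}(X)$ loses only a negligible piece of any measurable set. The key point is that the $\L^{(e)}(X)$ form an increasing exhausting family, and $\L^{(e)}(X)=\bigsqcup_{i=1}^{e}C_i(X)$ up to the stable set $\L^{(0)}(X)$ (or rather $\L^{(e)}(X)=\L^{(0)}(X)\sqcup\bigsqcup_{i=1}^{e}C_i(X)$), each $C_i(X)$ being stable hence measurable.

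First I would prove the forward implication: if $A$ is measurable, then since $\L^{(e)}(X)$ is itself measurable --- being a finite disjoint union of the stable sets $C_i(X)$ (for $i=1,\dots,e$) together with the stable set $\L^{(0)}(X)$, or more directly because $\pi_n(\L^{(e)}(X))\in\AS$ and $\L^{(e)}(X)$ is stable by Proposition~\ref{prop:ptf} --- the intersection $A\cap\L^{(e)}(X)$ is measurable by the already-established closure of the class of measurable sets under finite intersection. This direction holds for \emph{every} $e$, not just $e\gg0$, and requires no dimension estimate.

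For the reverse implication, suppose $A\cap\L^{(e)}(X)$ is measurable for all $e\gg0$, say for $e\ge e_0$. Write $A=\big(A\cap\L(X_\sing)\big)\sqcup\bigsqcup_{i\ge1}\big(A\cap C_i(X)\big)$. The tail $\bigcup_{i>e}\big(A\cap C_i(X)\big)=A\setminus\big(A\cap\L^{(e)}(X)\big)$ minus the part in $\L(X_\sing)$; more cleanly, I would argue that $A\cap\L(X_\sing)$ is measurable of dimension $-\infty$ (indeed contained in $\pi_i^{-1}(\L_i(X_\sing))$ for every $i$, whose dimension goes to $-\infty$ after normalization by Lemma~\ref{lem:dimLS}, since $\dim X_\sing<d$ --- this uses that Greenberg's theorem applies to $H_X$ as noted in Remark~\ref{rem:singarcs}), hence measurable with $\mu=0$. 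Then for $e\ge e_0$, the set $A\cap\L^{(e)}(X)$ is measurable by hypothesis, and $A=\bigcup_{e\ge e_0}\big(A\cap\L^{(e)}(X)\big)\cup\big(A\cap\L(X_\sing)\big)$ is a countable union; to apply Proposition~\ref{prop:measurableseries} I would set $B_e=A\cap\big(\L^{(e)}(X)\setminus\L^{(e-1)}(X)\big)=A\cap C_e(X)$ for $e>e_0$, together with $B_{e_0}=A\cap\L^{(e_0)}(X)$ and the piece $A\cap\L(X_\sing)$, observe these are pairwise disjoint and measurable, and check $\dim B_e\le\dim C_e(X)\to-\infty$ by Proposition~\ref{prop:dimCi}. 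Proposition~\ref{prop:measurableseries} then gives measurability of the union, which is $A$.

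The main obstacle I anticipate is handling the singular-arc piece $A\cap\L(X_\sing)$ rigorously: one must confirm it is measurable (not merely that it ought to be negligible), which means exhibiting it as a limit of stable approximants with dimension drifting to $-\infty$. The clean way is to note $A\cap\L(X_\sing)\subset\L(X_\sing)\subset\pi_i^{-1}(\pi_i(\L(X_\sing)))\subset\pi_i^{-1}(\L_i(X_\sing))$ and invoke Lemma~\ref{lem:dimLS} applied to $S=X_\sing$ (valid since $\dim X_\sing<d$), giving that the empty stable set $A_m=\varnothing$ together with $C_{m,i}$ chosen from these truncation cylinders witnesses measurability with $\mu(A\cap\L(X_\sing))=0$; but one should double-check that $\pi_i^{-1}(\L_i(X_\sing))$, or a suitable stable set containing it, genuinely has the required structure --- here the subtlety flagged in Remark~\ref{rem:singarcs} (that $\L_i(X_\sing)$ defined via $H_X$ differs from the one defined via $I(X_\sing)$) must be navigated carefully, using the version of Lemma~\ref{lem:dimLS} adapted to $H_X$ as indicated in the proof of Proposition~\ref{prop:dimCi}.
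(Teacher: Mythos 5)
Your argument is correct and follows the paper's own proof essentially verbatim: the forward direction uses that $\L^{(e)}(X)$ is stable (Proposition \ref{prop:ptf}), so $A\cap\L^{(e)}(X)$ is an intersection of measurable sets, and the converse writes $A$ as the union of $A\cap\L^{(e_0)}(X)$ with the sets $A\cap C_i(X)$, whose dimensions tend to $-\infty$ by Proposition \ref{prop:dimCi}, and then applies Proposition \ref{prop:measurableseries}. The only difference is that you also account for the piece $A\cap\L(X_\sing)$, which the paper's displayed decomposition of $A$ silently omits; your observation that this piece is measurable of measure zero (being contained in the arc space of the lower-dimensional algebraic set $X_\sing$, so that it is covered by sets of arbitrarily negative virtual dimension) is a legitimate completion of the argument rather than a deviation from it.
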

\begin{proof}
By Proposition \ref{prop:ptf} every $\L^{(e)}(X)$ is stable and therefore if $A$ is 
measurable so is every $A\cap\L^{(e)}(X)$. 

Suppose now that $\forall e\ge N,\, A\cap\L^{(e)}(X)$ is measurable. 
Then so are $A\cap C_i(X)$ for $i> N$. Hence $$A=\displaystyle\big(A\cap\L^{(N)}(X)\big)\cup\bigcup_{i>N}\big(A\cap C_i(X)\big)$$ is measurable by Proposition \ref{prop:measurableseries}. 
\end{proof}

\begin{defn}
A cylinder at level $n$ is a subset $A\subset\L(X)$ of the form $$A=\pi_n^{-1}(C)$$ for $C$ an $\AS$-subset of $\L_n(X)$. 
\end{defn}

\begin{rem}
A cylinder at level $n$ is a cylinder at level $m$ for $m\ge n$. Indeed $\pi_n=\pi^m_n\circ\pi_m$ so that $\pi_n^{-1}(C)=\pi_m^{-1}\left((\pi^m_n)^{-1}(C)\right)$ where $(\pi^m_n)^{-1}(C)\in\AS$ as the inverse image of an $\AS$-set by a projection.
\end{rem}

The following result derives from Proposition \ref{prop:ptf}.
\begin{prop}\label{prop:stabcyl}
If $X$ is non-singular, a cylinder of $\L(X)$ is stable.
\end{prop}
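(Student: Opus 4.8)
The plan is to reduce the statement to Proposition \ref{prop:ptf} via the fact that when $X$ is non-singular, the two spaces $\L_n(X)$ and $\pi_n(\L(X))$ coincide, and that $\L^{(e)}(X)$ exhausts $\L(X)$ up to lower-dimensional pieces. First I would recall from the Proposition quoted from \cite{Cam16} (or rather its consequence stated in the earlier Remark) that for $X$ non-singular one has $\L_n(X)=\pi_n(\L(X))$ for all $n$, so that a cylinder $A=\pi_n^{-1}(C)$ with $C\in\AS$ an $\AS$-subset of $\L_n(X)=\pi_n(\L(X))$ is automatically a set of the form $\pi_n^{-1}(\pi_n(A))$ with $\pi_m(A)\in\AS$ for every $m\ge n$. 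The latter holds because $\pi_m(A)=(\pi^m_n)^{-1}(C)$ is the inverse image of an $\AS$-set by a projection, hence $\AS$, exactly as in the remark following the definition of stable set. This disposes of the first two bullet conditions in the definition of stability.

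For the third condition I would invoke Proposition \ref{prop:ptf}(ii): since $X$ is non-singular, the singular locus $X_\sing$ is empty, $H_X$ is the unit ideal, and therefore $\L^{(0)}(X)=\L(X)$; more directly, for non-singular $X$ every arc lies in $\L^{(e)}(X)$ already for $e=0$. Thus Proposition \ref{prop:ptf}(ii) applied with $e=0$ gives that, for all $n\ge 0$, the truncation $\pi^{n+1}_n:\pi_{n+1}(\L(X))\to\pi_n(\L(X))$, i.e. $\pi^{n+1}_n:\L_{n+1}(X)\to\L_n(X)$, is an $\AS$ piecewise trivial fibration with fiber $\R^d$. Restricting such a fibration over the $\AS$-subset $\pi_n(A)=(\pi^m_n)^{-1}(C)$ of the base again yields an $\AS$ piecewise trivial fibration with fiber $\R^d$ (one simply intersects the trivializing partition of the base with $\pi_n(A)$). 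This gives the third bullet for every $m\ge n$, so $A$ is stable at level $n$.

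The main point to be careful about — really the only place anything can go wrong — is making sure the equality $\L_n(X)=\pi_n(\L(X))$ is legitimately available in the non-singular case, so that the $\AS$-set $C$ defining the cylinder is genuinely a subset of $\pi_n(\L(X))$ and not of some strictly larger jet space; this is precisely the content of the Proposition of \cite{Cam16} quoted in the excerpt (conditions (i)$\Leftrightarrow$(ii)), and of the Remark preceding it, so no new work is needed. Everything else is a matter of restricting an $\AS$ piecewise trivial fibration to an $\AS$-subset of its base and observing that $\AS$-ness is preserved by inverse images under projections. I expect the write-up to be short: it is essentially a bookkeeping argument assembling Proposition \ref{prop:ptf} with the non-singular identification of jets and liftable jets.
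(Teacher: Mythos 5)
Your proposal is correct and follows exactly the route the paper intends: the paper's entire proof is the remark that the result ``derives from Proposition \ref{prop:ptf}'', and your argument fills in precisely the expected details — the identification $\L_n(X)=\pi_n(\L(X))$ in the non-singular case, the observation that $\L^{(0)}(X)=\L(X)$ since $V(H_X)=X_\sing=\varnothing$, and the restriction of the resulting $\AS$ piecewise trivial fibration $\pi^{m+1}_m:\L_{m+1}(X)\rightarrow\L_m(X)$ over the $\AS$-set $\pi_m(A)=(\pi^m_n)^{-1}(C)$. No gaps; only a harmless index slip where you once write $\pi_n(A)=(\pi^m_n)^{-1}(C)$ instead of $\pi_m(A)$.
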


\begin{prop}
A cylinder $A\subset\L(X)$ is measurable and $$\mu(A)=\lim_{m\to+\infty}\mu\left(A\cap\L^{(m)}(X)\right)$$
\end{prop}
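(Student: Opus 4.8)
The plan is to reduce the statement to the already-established measurability of the pieces $A \cap \L^{(m)}(X)$ and to the fact that the difference is small. First I would fix a cylinder $A = \pi_n^{-1}(C)$ with $C \in \AS$ an $\AS$-subset of $\L_n(X)$. The key observation is that for each $e \in \N$, the intersection $A \cap \L^{(e)}(X)$ is of the form $\pi_n^{-1}(C) \cap \L^{(e)}(X)$, and since $\L^{(e)}(X)$ is stable (Proposition \ref{prop:ptf}) and $\pi_n^{-1}(C)$ is cut out by an $\AS$-condition at a fixed level, the intersection should again be stable: concretely, for $m \ge \max(n,e)$ one has $\pi_m(A \cap \L^{(e)}(X)) = (\pi^m_n)^{-1}(C) \cap \pi_m(\L^{(e)}(X))$, which is $\AS$ as the intersection of an $\AS$-set with an $\AS$-set, and the stability fibration property is inherited from that of $\L^{(e)}(X)$ because the extra condition is pulled back from level $n \le m$. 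Hence each $A \cap \L^{(e)}(X)$ is stable, in particular measurable, with a well-defined measure $\mu(A \cap \L^{(e)}(X)) = [\pi_m(A \cap \L^{(e)}(X))]/\LL^{(m+1)d}$ for $m \gg 1$.

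Next I would write $A$ as a countable disjoint-type union using the $C_i(X)$. Since $\L(X) = \big(\bigcup_{e} \L^{(e)}(X)\big) \sqcup \L(X_\sing)$ and $C_i(X) = \L^{(i)}(X) \setminus \L^{(i-1)}(X)$, we get for any fixed $N$ that
$$A = \big(A \cap \L^{(N)}(X)\big) \cup \bigcup_{i > N} \big(A \cap C_i(X)\big) \cup \big(A \cap \L(X_\sing)\big).$$
The point is that $A \cap \L(X_\sing)$ is negligible: it is contained in $\pi_n^{-1}(\L_n(X_\sing))$-type sets whose truncations have dimension dropping to $-\infty$ by (an analogue of) Lemma \ref{lem:dimLS} applied to $X_\sing$, as was used in the proof of Proposition \ref{prop:dimCi}; so after intersecting with the stable $A \cap C_i(X)$ and passing to the limit this part contributes nothing. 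More cleanly, I would invoke Corollary \ref{cor:measALe}: since each $A \cap \L^{(e)}(X)$ is measurable for $e \gg 0$, the set $A$ itself is measurable. Then Proposition \ref{prop:dimCi} gives $\dim C_i(X) \to -\infty$, so $\dim(A \cap C_i(X)) \to -\infty$, and Proposition \ref{prop:measurableseries} applies to the disjoint union $A = (A \cap \L^{(N)}(X)) \sqcup \bigsqcup_{i>N}(A \cap C_i(X)) \sqcup (A \cap \L(X_\sing))$, yielding
$$\mu(A) = \mu\big(A \cap \L^{(N)}(X)\big) + \sum_{i>N} \mu\big(A \cap C_i(X)\big).$$

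Finally I would identify the telescoping: for each $m > N$,
$$\mu\big(A \cap \L^{(m)}(X)\big) = \mu\big(A \cap \L^{(N)}(X)\big) + \sum_{i=N+1}^{m} \mu\big(A \cap C_i(X)\big),$$
because $\L^{(m)}(X) = \L^{(N)}(X) \sqcup \bigsqcup_{i=N+1}^m C_i(X)$ and the measure is additive on disjoint unions of stable sets. Letting $m \to +\infty$, the right-hand side converges (the tail sum converges in $\widehat\M$ since $\dim(A \cap C_i(X)) \to -\infty$) to $\mu\big(A \cap \L^{(N)}(X)\big) + \sum_{i>N}\mu(A \cap C_i(X)) = \mu(A)$. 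Thus $\mu(A) = \lim_{m\to+\infty} \mu(A \cap \L^{(m)}(X))$, which is the claimed formula. I expect the main obstacle to be the bookkeeping for why $A \cap \L^{(e)}(X)$ is genuinely stable — one must check all three stability conditions carefully, in particular that the $\AS$-piecewise-trivial-fibration structure of $\pi^{m+1}_m$ restricted to $\pi_{m+1}(A \cap \L^{(e)}(X))$ follows from that of $\L^{(e)}(X)$ by restricting to the $\AS$-subset $(\pi^m_n)^{-1}(C)$ of the base; everything else is a routine assembly of Corollary \ref{cor:measALe}, Proposition \ref{prop:dimCi} and Proposition \ref{prop:measurableseries}.
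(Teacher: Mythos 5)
Your proposal is correct and follows essentially the same route as the paper: approximate $A$ by the stable sets $A\cap\L^{(e)}(X)$ (stability following from Proposition \ref{prop:ptf}), use the dimension decay of the $C_i(X)$ from Proposition \ref{prop:dimCi} to get measurability, and identify the limit via the telescoping over the $C_i(X)$. The only cosmetic difference is that you package the final steps through Corollary \ref{cor:measALe} and Proposition \ref{prop:measurableseries}, whereas the paper exhibits the approximating data $A_m=A\cap\L^{(\varphi(-m))}(X)$ directly and checks the Cauchy property by hand; you are also slightly more explicit than the paper about the arcs lying entirely in $X_\sing$.
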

\begin{proof}
By Proposition \ref{prop:dimCi}, we may construct by induction an increasing map $\varphi:\Np\rightarrow\Np$ such that $$i\ge\varphi(s)\Rightarrow\dim C_i(X)<-s$$
Let $m\in\Z_{<0}$. Set $A_m=A\cap\L^{(\varphi(-m))}(X)$. Then $A_m$ is stable by Proposition \ref{prop:ptf} and $$A\Delta A_m=A\setminus\L^{(\varphi(-m))}(X)=A\cap\pi_{\varphi(-m)}^{-1}\left(\L_{\varphi(-m)}(X_\sing)\right)\subset\bigcup_{i\ge\varphi(-m)}C_i(X)$$ where $C_i(X)$ is stable with $\dim C_i(X)<m$. Hence $A$ is measurable and $$\mu(A)=\lim_{m\to+\infty}\mu\left(A\cap\L^{(\varphi(m))}(X)\right)$$

The second part of the statement derives from the fact that $\left(\mu\left(A\cap\L^{(m)}(X)\right)\right)_{m\in\Np}$ is already a Cauchy sequence. Assume that $A$ is a cylinder at level $s$ then $A\cap\L^{(m)}(X)$ is stable at level $\max(m,s)$. Indeed fix $k\in\N$. Then, for $n\ge m'\ge m\ge\max(\varphi(k),s)$, we get
\begin{align*}
\mu\left(A\cap\L^{(m)'}(X)\right)-\mu\left(A\cap\L^{(m)}(X)\right)&=\frac{\left[\pi_n\left(A\cap\L^{(m')}(X)\right)\right]}{\LL^{-(n+1)d}}-\frac{\left[\pi_n\left(A\cap\L^{(m)}(X)\right)\right]}{\LL^{-(n+1)d}}\\
&=\frac{\left[\pi_n\left(A\cap\L^{(m')}(X)\right)\setminus\pi_n\left(A\cap\L^{(m)}(X)\right)\right]}{\LL^{-(n+1)d}}\in\mathcal F^k\M
\end{align*}
\end{proof}

\begin{cor}
For $Y\subset X$ an algebraic subset, set $$\L(X,Y)=\left\{\gamma\in\L(X),\,\gamma(0)\in Y\right\}$$ then
\begin{itemize}[nosep]
\item $\L(X,Y)$ is a measurable subset of $\L(X)$;
\item in particular, $\L(X)$ is measurable.
\end{itemize}
\end{cor}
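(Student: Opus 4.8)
The plan is to recognize $\L(X,Y)$ as a cylinder and then invoke the proposition asserting that cylinders of $\L(X)$ are measurable. First I would note that, by definition, $\L_0(X)$ consists of the $0$-jets $a_0\in\R^N$ with $f(a_0)=0$ for every $f\in I(X)$, so that $\L_0(X)=X$, and under this identification the truncation map $\pi_0:\L(X)\rightarrow\L_0(X)$ is just the evaluation $\gamma\mapsto\gamma(0)$. Consequently $\L(X,Y)=\pi_0^{-1}(Y)$.

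Next I would check that $Y$, viewed as a subset of $\L_0(X)=X$, is an $\AS$-set. This is immediate: an algebraic subset of $\R^N$ is Zariski closed, hence closed for the finer $\AS$-topology, and closed $\AS$-sets are in particular $\AS$-sets. (Alternatively one verifies arc-symmetry directly: if a real analytic arc $\gamma$ maps $(-1,0)$ into $Y$, then each $f\in I(Y)$ vanishes on $\gamma(-1,0)$ and hence, by the identity principle for real analytic functions, on all of $\gamma(-1,1)$.)

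Therefore $\L(X,Y)=\pi_0^{-1}(Y)$ is a cylinder at level $0$ in the sense of the definition preceding the statement, so the proposition that a cylinder $A\subset\L(X)$ is measurable applies verbatim and yields that $\L(X,Y)$ is measurable. For the last assertion I would simply take $Y=X$: then $\L(X)=\L(X,X)=\pi_0^{-1}(X)$ is itself a cylinder, hence measurable.

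There is essentially no obstacle here; the only point requiring a word of justification is that $Y$ is an $\AS$-subset of $\L_0(X)$, which is why the statement is phrased as a corollary. One could equally bypass the cylinder proposition and argue directly via Corollary~\ref{cor:measALe}, showing that each $\L(X,Y)\cap\L^{(e)}(X)=\left\{\gamma\in\L^{(e)}(X),\,\gamma(0)\in Y\right\}$ is stable --- using that the condition $\gamma(0)\in Y$ factors through $\pi_0$, that $\pi_m(\L^{(e)}(X))$ is $\AS$ by Proposition~\ref{prop:ptf}(i), and that the restriction of an $\AS$ piecewise trivial fibration over an $\AS$-subset of its base is again an $\AS$ piecewise trivial fibration --- but the cylinder route is shorter.
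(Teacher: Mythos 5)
Your proposal is correct and follows exactly the paper's own argument: the paper also proves the corollary by observing that $\L(X,Y)=\pi_0^{-1}(Y)$ and $\L(X)=\pi_0^{-1}(X)$ are cylinders and invoking the measurability of cylinders. Your additional remark that $Y$ is an $\AS$-subset of $\L_0(X)$ (being Zariski closed) is a correct and worthwhile verification of the hypothesis in the definition of a cylinder, which the paper leaves implicit.
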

\begin{proof}
Indeed, $\L(X)=\pi_0^{-1}(X)$ and $\L(X,Y)=\pi_0^{-1}(Y)$ are cylinders.
\end{proof}

\begin{cor}
If $Y\subset X$ is an algebraic subset with $\dim Y<\dim X$, then $\L(Y)\subset\L(X)$ is measurable of measure $0$: $$\mu_{\L(X)}\left(\L(Y)\right)=0$$
\end{cor}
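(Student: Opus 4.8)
The plan is to exhibit $\L(Y)$ as a decreasing countable intersection of cylinders whose measures tend to $0$, and then to recover $\L(Y)$ itself from these cylinders by an elementary telescoping argument. For $n\in\N$ set $F_n=\pi_n^{-1}(\L_n(Y))\subseteq\L(X)$. Since $Y\subseteq X$ we have $\L_n(Y)\subseteq\L_n(X)$, and $\L_n(Y)$ is an algebraic, hence $\AS$, subset of $\L_n(X)$; so each $F_n$ is a cylinder and is therefore measurable. The sequence $(F_n)_n$ is decreasing, and $\bigcap_nF_n=\L(Y)$ because an analytic arc on $X$ lies in $Y$ exactly when each of its jets does.

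The quantitative heart of the argument is the claim that $\mu(F_n)\to0$ in $\widehat\M$. Here I would use that $\mu(F_n)=\lim_{e\to\infty}\mu\big(F_n\cap\L^{(e)}(X)\big)$ and that each $F_n\cap\L^{(e)}(X)$ is stable (as in the proof that cylinders are measurable), so that its virtual dimension equals $\dim\pi_m\big(F_n\cap\L^{(e)}(X)\big)-(m+1)d$ for $m\gg1$. Since $F_n\cap\L^{(e)}(X)\subseteq F_n$, we have $\pi_m\big(F_n\cap\L^{(e)}(X)\big)\subseteq\pi_m\big(\pi_n^{-1}(\L_n(Y))\big)$, and Lemma \ref{lem:dimLS} applied to the algebraic subset $Y\subseteq X$ provides, for each $a\in\N$, an integer $N_a$ such that $\dim\pi_m\big(\pi_n^{-1}(\L_n(Y))\big)\le(m+1)d-a-1$ whenever $m\ge n\ge N_a$. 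Consequently, as soon as $n\ge N_a$, the stable set $F_n\cap\L^{(e)}(X)$ has virtual dimension $\le-a-1$ for every $e$, hence so does their limit $\mu(F_n)$ (virtual dimension $\le-a-1$ being a closed condition in $\widehat\M$); letting $a\to\infty$ with $n\ge N_a$ gives $\dim\mu(F_n)\to-\infty$, i.e.\ $\mu(F_n)\to0$.

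To conclude, set $G_k=F_{k-1}\setminus F_k$ for $k\ge1$. Each $G_k$ is again a cylinder (the difference of two cylinders written at the common level $k$), the $G_k$ are pairwise disjoint because $(F_n)_n$ decreases, and by additivity $\mu(G_k)=\mu(F_{k-1})-\mu(F_k)$, which tends to $0$ by the previous step; in particular $\dim G_k\to-\infty$. By Proposition \ref{prop:measurableseries}, $\bigcup_{k\ge1}G_k$ is measurable with $\mu\big(\bigcup_{k\ge1}G_k\big)=\sum_{k\ge1}\mu(G_k)$, and this series telescopes to $\mu(F_0)-\lim_n\mu(F_n)=\mu(F_0)$. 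Since $(F_n)_n$ is decreasing, $\bigcup_{k\ge1}G_k=F_0\setminus\bigcap_nF_n=F_0\setminus\L(Y)$, a subset of $F_0$; therefore $\L(Y)=F_0\setminus\bigcup_{k\ge1}G_k$ is measurable as a difference of measurable sets, and applying additivity to the partition $F_0=\L(Y)\sqcup\bigcup_{k\ge1}G_k$ yields $\mu(\L(Y))=\mu(F_0)-\mu(F_0)=0$.

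The delicate point — and the reason the statement is not an immediate consequence of, say, Corollary \ref{cor:measALe} — is that $\L(Y)$ generally contains arcs lying entirely in $X_{\sing}$ (all those through $Y\cap X_{\sing}$), which are invisible to every $\L^{(e)}(X)$; one therefore cannot cover $\L(Y)$ by stable subsets of small virtual dimension, nor reduce its measurability to that of the sets $\L(Y)\cap\L^{(e)}(X)$ alone. Passing through the cylinders $F_n$, which do contain these arcs and are nevertheless measurable, is precisely what circumvents this; everything else is bookkeeping with the dimension filtration on $\M$ together with the decay $\mu(F_n)\to0$ coming from Lemma \ref{lem:dimLS}.
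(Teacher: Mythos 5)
Your proof is correct and follows the paper's own route: the paper likewise writes $\L(Y)=\bigcap_{n}\pi_n^{-1}(\L_n(Y))$ as a decreasing intersection of cylinders and observes that $\dim\mu\left(\pi_n^{-1}(\L_n(Y))\right)\le(n+1)(\dim Y-\dim X)\rightarrow-\infty$. You merely supply details the paper leaves implicit, namely the dimension estimate (obtained via Lemma~\ref{lem:dimLS} instead of the direct bound on $\dim\L_n(Y)$) and the telescoping decomposition $F_0=\L(Y)\sqcup\bigsqcup_{k\ge1}(F_{k-1}\setminus F_k)$ together with Proposition~\ref{prop:measurableseries}, which justify passing from the cylinders to their intersection.
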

\begin{proof}
Notice that $\L(Y)$ is a countable intersection of cylinders: $$\L(Y)=\bigcap_{n\in\N}\pi_n^{-1}(\L_n(Y))$$

Then $\pi_n^{-1}(\L_n(Y))$ is measurable as a cylinder and $$\dim\mu\left(\pi_n^{-1}(\L_n(Y))\right)\le(n+1)(\dim Y-\dim X)\xrightarrow[n\to\infty]{}-\infty$$
\end{proof}

\subsection{Motivic integral and the change of variables formula}
\begin{defn}
Let $X\subset\R^N$ be an algebraic subset. Let $A\subset\L(X)$ be a measurable set. Let $\alpha:A\rightarrow\Ninfty$ be such that each fiber is measurable and $\mu(\alpha^{-1}(\infty))=0$. We say that $\LL^{-\alpha}$ is integrable if the following sequence converges in $\widehat\M$: $$\int_A\LL^{-\alpha}\d\mu=\sum_{n\ge0}\mu\left(\alpha^{-1}(n)\right)\LL^{-n}$$
\end{defn}

\begin{defn}\label{defn:gen1to1}
We say that a semialgebraic map $\sigma:M\rightarrow X$ between semialgebraic sets is \emph{generically one-to-one} if there exists a semialgebraic set $S\subset X$ satisfying $\dim(S)<\dim(X)$, $\dim\left(\sigma^{-1}(S)\right)<\dim(M)$ and $\forall p\in X\setminus S,\,\#\sigma^{-1}(p)=1$.
\end{defn}

\begin{defn}\label{defn:ordjac1}
Let $\sigma:M\rightarrow X$ be a Nash map between a $d$-dimensional non-singular algebraic set $M$ and an algebraic subset $X\subset\R^N$. For a real analytic arc $\gamma:(\R,0)\rightarrow M$, we set $$\ord_t\jac_\sigma(\gamma(t))=\min\left\{\ord_t\delta(\gamma(t)),\,\forall\delta\text{ $d$-minor of }\Jac_\sigma\right\},$$ where the Jacobian matrix $\Jac_\sigma$ is defined using a local system of coordinates around $\gamma(0)$ in $M$.
\end{defn}

The following lemma is a generalization of Denef--Loeser change of variables key lemma \cite[Lemma 3.4]{DL99} to generically one-to-one Nash maps in the real context.
\begin{lemma}[{\cite[Lemma 4.5]{Cam16}}]\label{lem:CoV}
Let $\sigma:M\rightarrow X$ be a proper generically one-to-one Nash map where $M$ is a non-singular $d$-dimensional algebraic subset of $\R^p$ and $X$ a $d$-dimensional algebraic subset of $\R^N$.
For $e,e'\in\N$ and $n\in\N$, set $$\Delta_{e,e'}=\left\{\gamma\in\L(M),\,\ord_t\jac_\sigma(\gamma(t))=e,\,\sigma_*(\gamma)\in\L^{(e')}(X)\right\}, \quad \Delta_{e,e',n}=\pi_n\left(\Delta_{e,e'}\right),$$
where $\sigma_*:\L(M)\rightarrow\L(X)$ is induced by $\sigma$. \\
Then for $n\ge\operatorname{max}(2e,e')$ the following holds:
\begin{enumerate}[label=(\roman*)]
\item Given $\gamma\in\Delta_{e,e'}$ and $\delta\in\L(X)$ with $\sigma_*(\gamma)\equiv\delta\mod t^{n+1}$ there exists a unique $\eta\in\L(M)$ such that $\sigma_*(\eta)=\delta$ and $\eta\equiv\gamma\mod t^{n-e+1}$.
\item Let $\gamma,\eta\in\L(M)$. If $\gamma\in\Delta_{e,e'}$ and $\sigma_*(\gamma)\equiv\sigma_*(\eta)\mod t^{n+1}$ then $\gamma\equiv\eta\mod t^{n-e+1}$ and $\eta\in\Delta_{e,e'}$.
\item The set $\Delta_{e,e',n}$ is a union of fibers of $\sigma_{*n}$.
\item $\sigma_{*n}(\Delta_{e,e',n})$ is an $\AS$-set and $\sigma_{*n|\Delta_{e,e',n}}:\Delta_{e,e',n}\rightarrow \sigma_{*n}(\Delta_{e,e',n})$ is an $\AS$ piecewise trivial fibration with fiber $\R^e$.
\end{enumerate}
\end{lemma}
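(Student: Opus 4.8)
The plan is to carry over the Denef--Loeser change of variables key lemma \cite[Lemma 3.4]{DL99} to the real Nash setting, following \cite[Lemma 4.5]{Cam16}. Items (i) and (ii) are ``analytic'' — they only involve power series and Artin approximation, not the $\AS$-machinery — whereas (iii) is a formal consequence of (ii) and only the constructibility part of (iv) uses the results of this section. First I set up local coordinates: since $M$ is non-singular of dimension $d$, near any point $\gamma(0)$ a Nash chart identifies a neighborhood of $\gamma(0)$ in $M$ with an open subset of $\R^d$, in which $\sigma$ becomes an $N$-tuple of Nash functions mapping into $X$. The quantity $\ord_t\jac_\sigma(\gamma(t))$ is chart-independent, since a change of chart multiplies $\Jac_\sigma(\gamma(t))$ on the right by a matrix of unit determinant in $\R\{t\}$; and using properness, generic one-to-oneness and Nashness one may enlarge the bad set $S$ of Definition~\ref{defn:gen1to1} so that $X_\sing\subset S$ and $\sigma$ restricts to a Nash isomorphism over $X\setminus S$.

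For (i), fix $\gamma\in\Delta_{e,e'}$, work in a chart near $\gamma(0)$, and after permuting the coordinates of $\R^N$ assume the $d$-minor $\delta_0:=\det\big(\partial\sigma_i/\partial x_j\big)_{1\le i,j\le d}$ has $\ord_t\delta_0(\gamma(t))=e$; write $p\colon\R^N\to\R^d$ for the projection to these first $d$ coordinates. Given $\delta\in\L(X)$ with $\sigma_*(\gamma)\equiv\delta\bmod t^{n+1}$, look for $\eta$ of the form $\eta(t)=\gamma(t)+t^{n-e+1}u(t)$; Taylor's formula gives
\[\sigma(\eta(t))=\sigma(\gamma(t))+t^{n-e+1}\Jac_\sigma(\gamma(t))\,u(t)+t^{2(n-e+1)}R\big(t,u(t)\big),\]
and imposing equality of the first $d$ coordinates with those of $\delta$, using $\delta-\sigma_*(\gamma)\in t^{n+1}\R\{t\}^N$, multiplying by the adjugate of $\big(\partial\sigma_i/\partial x_j\big)_{1\le i,j\le d}$ and dividing by $t^e$ turns the system into $u=F(t,u)$ with $F$ real analytic and $\partial F/\partial u$ vanishing at $t=0$ — the nonlinear term now carrying the factor $t^{\,n-2e+1}$, which is where $n\ge 2e$ enters. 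Hensel's lemma together with Artin's approximation theorem then yields a unique analytic $u$, hence a unique $\eta$ with $\eta\equiv\gamma\bmod t^{n-e+1}$ solving the first $d$ equations. To see $\sigma_*(\eta)=\delta$ on all coordinates, note that since $n\ge e'$ and $\sigma_*(\gamma)\in\L^{(e')}(X)$ we have $\delta\in\L^{(e')}(X)$, so $\delta(t)\in\Reg(X)$ for small $t\neq 0$; likewise $\ord_t\delta_0(\eta(t))=e<\infty$ makes $\sigma$ an immersion along $\eta$ with $\det d(p\circ\sigma)_{\eta(t)}=\delta_0(\eta(t))\neq 0$, so $p$ is a local diffeomorphism of $X$ near $\sigma(\eta(t))$; as $\sigma(\eta)-\delta=O(t^{n-e+1})$, the arcs $\sigma(\eta)$ and $\delta$ of $X$ have equal image under $p$ and are arbitrarily close, hence agree for $t\neq 0$ small, hence identically. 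Uniqueness of $\eta$ in $\L(M)$ follows since any such arc solves the first $d$ equations. Statement (ii) is the same computation in relative form: $\sigma_*(\gamma)\equiv\sigma_*(\eta)\bmod t^{n+1}$ forces $\sigma(\gamma(0))=\sigma(\eta(0))$, and because $\sigma_*(\gamma)\in\L^{(e')}(X)$ with $n\ge e'$ this common point is left ``away from $S$'' quickly enough that $\eta(0)$ cannot be a different preimage; we are then in a chart around $\gamma(0)$ and the Hensel estimate gives $\eta\equiv\gamma\bmod t^{n-e+1}$, whence $\eta\in\Delta_{e,e'}$ because the conditions defining $\Delta_{e,e'}$ depend only on the $\max(e,e')$-jet, which $\eta$ and $\gamma$ share.

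Item (iii) follows from (ii): if $\zeta\in\Delta_{e,e',n}$ and $\zeta'\in\L_n(M)$ with $\sigma_{*n}(\zeta')=\sigma_{*n}(\zeta)$, lift $\zeta=\pi_n(\gamma)$ with $\gamma\in\Delta_{e,e'}$ and, using that $M$ is non-singular, $\zeta'=\pi_n(\eta')$ with $\eta'\in\L(M)$; then $\sigma_*(\eta')\equiv\sigma_*(\gamma)\bmod t^{n+1}$ so $\eta'\in\Delta_{e,e'}$ by (ii), hence $\zeta'\in\pi_n(\Delta_{e,e'})=\Delta_{e,e',n}$. For (iv): the conditions defining $\Delta_{e,e'}$ depend only on the $n$-jet when $n\ge\max(e,e')$, and are $\AS$ by Proposition~\ref{prop:ptf}~(i) together with the fact that the Nash map $\sigma_{*n}$ pulls $\AS$-sets back to $\AS$-sets, so $\Delta_{e,e',n}=\pi_n(\Delta_{e,e'})$ is an $\AS$-subset of $\L_n(M)$ (with Greenberg's Theorem~\ref{thm:greenberg} used where needed). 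Given $w\in\sigma_{*n}(\Delta_{e,e',n})$ and $\gamma\in\Delta_{e,e'}$ with $\sigma_{*n}(\pi_n\gamma)=w$, (ii) shows every element of $\sigma_{*n}^{-1}(w)\cap\Delta_{e,e',n}$ has the form $\pi_n\big(\gamma+t^{n-e+1}u\big)$ with $u$ a polynomial of degree $<e$ satisfying $\Jac_\sigma(\gamma(t))\,u(t)\equiv 0\bmod t^e$ (here $n\ge 2e$ kills the Taylor remainder modulo $t^{n+1}$); reducing $\Jac_\sigma(\gamma(t))$ to Smith normal form over $\R\{t\}$, whose elementary divisors have exponents summing to $\ord_t\jac_\sigma(\gamma)=e$, one finds this is an $\R$-linear subspace of dimension exactly $e$. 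Embedding $\Delta_{e,e',n}$ into $\sigma_{*n}(\Delta_{e,e',n})\times\L_n(M)$ by the graph of $\sigma_{*n}$ and invoking the lemma following Proposition~\ref{prop:ptf} on projections with affine fibers, $\sigma_{*n|\Delta_{e,e',n}}$ is an $\AS$ piecewise trivial fibration with fiber $\R^e$; its fibers then have Euler characteristic with compact support $(-1)^e$, which is odd, so $\sigma_{*n}(\Delta_{e,e',n})$ is $\AS$ by \cite[Theorem 4.3]{Par04}.

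The genuinely new difficulty, absent in the non-singular and in the complex case, is the control of the several preimages of $\sigma$ over the singular locus: without the hypothesis $\sigma_*(\gamma)\in\L^{(e')}(X)$ and the bound $n\ge e'$, the comparison arc $\eta$ in (ii), or the lift $\delta$ in (i), could jump to a different sheet of $\sigma$ over a point of $X_\sing$, and neither the uniqueness nor the congruence modulo $t^{n-e+1}$ would survive; this is precisely the role played by $e'$. The second obstacle is proving the $\AS$-constructibility of $\Delta_{e,e',n}$ and of $\sigma_{*n}(\Delta_{e,e',n})$ in the absence of Chevalley's theorem, which is handled by Greenberg's theorem (reducing non-constructible projections to constructible data) and by Parusiński's image theorem \cite[Theorem 4.3]{Par04} for maps whose fibers have odd Euler characteristic with compact support.
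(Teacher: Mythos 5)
First, a point of reference: the paper does not prove Lemma~\ref{lem:CoV} at all --- it is quoted verbatim from \cite[Lemma 4.5]{Cam16} --- so there is no in-text proof to compare yours against. Your strategy (Taylor expansion plus Hensel/Artin for (i)--(ii), jet-dependence for (iii), affine fibers of dimension $e$ computed via the elementary divisors of $\Jac_\sigma(\gamma(t))$ together with \cite[Theorem 4.3]{Par04} for (iv)) is exactly the Denef--Loeser scheme that the cited proof follows, and the computation showing that the fiber of $\sigma_{*n}|_{\Delta_{e,e',n}}$ is an affine space of dimension $\sum a_i=e$ is correct.

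Two places need repair. The first is the pair of steps where you pass from a statement about $d$ of the $N$ coordinates to a statement about all of them: in (i), ``the arcs $\sigma(\eta)$ and $\delta$ have equal image under $p$ and are arbitrarily close, hence agree for $t\neq0$ small,'' and in (ii), ``the common point is left away from $S$ quickly enough that $\eta(0)$ cannot be a different preimage.'' As written these are not proofs: the radius on which $p|_X$ is injective near $\delta(t)$, and the separation between the sheets of $\sigma$ over $\sigma(\gamma(t))$, both shrink as $t\to0$, possibly faster than the error $O(t^{n-e+1})$ you control. This is precisely where the quantitative hypotheses enter in \cite{DL99,Cam16}: the membership $\sigma_*(\gamma)\in\L^{(e')}(X)$ hands you explicit data $f_1,\ldots,f_{N-d}\in I(X)$, a minor of their Jacobian of order $\le e'$ along the arc, and an element $h$ of the ideal quotient, from which one deduces a Łojasiewicz-type bound (with exponent controlled by $e'$, whence $n\ge e'$) forcing the last $N-d$ coordinates to be determined by the first $d$ to the required order. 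For the ``different preimage'' issue in (ii), the clean route is: apply (i) with $\delta=\sigma_*(\eta)$ to get $\eta'\equiv\gamma\bmod t^{n-e+1}$ with $\sigma_*(\eta')=\sigma_*(\eta)$, then show $\eta'=\eta$ by observing that if $\eta(t)\neq\eta'(t)$ for $t\neq0$ small, with $\eta'(t)$ an immersion point whose image lies in $\Reg(X)$, then an entire neighborhood of $\eta(t)$ in $M$ would be contained in $\sigma^{-1}(S)$, contradicting $\dim\sigma^{-1}(S)<\dim M$ in Definition~\ref{defn:gen1to1}. The second repair is a circularity in (iv): the lemma following Proposition~\ref{prop:ptf} requires the base of the projection to be an $\AS$-set, so you cannot use it to establish the fibration first and then deduce that $\sigma_{*n}(\Delta_{e,e',n})$ is $\AS$. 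The order must be reversed: the fiber-by-fiber computation (each fiber is $\cong\R^e$, hence has odd Euler characteristic with compact support) already lets you apply \cite[Theorem 4.3]{Par04} to conclude the image is $\AS$, and only then does the affine-fiber lemma give the piecewise trivial fibration. Finally, Greenberg's Theorem~\ref{thm:greenberg} is not needed for the $\AS$-ness of $\Delta_{e,e',n}$: since $M$ is non-singular, $\Delta_{e,e'}$ is a cylinder over an $\AS$-subset of $\L_{\max(e,e')}(M)$ and $\pi_n$ is surjective.
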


\begin{lemma}\label{lem:invcyl}
Let $\sigma:X\rightarrow Y$ be a Nash map between algebraic sets. If $A\subset\L(Y)$ is a cylinder then $\sigma_*^{-1}(A)\subset\L(X)$ is also a cylinder.
\end{lemma}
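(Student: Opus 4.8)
Let $\sigma:X\rightarrow Y$ be a Nash map between algebraic sets. If $A\subset\L(Y)$ is a cylinder then $\sigma_*^{-1}(A)\subset\L(X)$ is also a cylinder.

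The plan is the following. By definition of a cylinder we may write $A=\pi_n^{-1}(C)$ with $C$ an $\AS$-subset of $\L_n(Y)$, and then $C$ is also an $\AS$-subset of $\L_n(\R^N)$ (say $X\subset\R^p$ and $Y\subset\R^N$). The point will be that $\sigma$ admits a prolongation to $n$-jets and that $\sigma_*^{-1}(A)$ is simply the cylinder obtained by pulling $C$ back along it.

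First I would construct a map $\sigma_{*n}:\L_n(X)\to\L_n(\R^N)$ by sending a jet $\delta(t)=a_0+a_1t+\cdots+a_nt^n$, with $a_0\in X$, to the $n$-jet of the power series $\sigma(\delta(t))$ obtained from the convergent Taylor expansion of $\sigma$ at $a_0$; this makes sense because $\sigma$, being Nash, is the restriction of a real analytic map on a neighbourhood of $X$. By the Fa\`{a} di Bruno formula, the coefficients of $\sigma_{*n}(\delta)$ are universal polynomials in $a_0,\dots,a_n$ and in the partial derivatives $D^\alpha\sigma(a_0)$ for $|\alpha|\le n$, which are again Nash, hence real analytic and semialgebraic; therefore $\sigma_{*n}$ is semialgebraic and arc-analytic, so its graph $\Gamma\subset\L_n(X)\times\L_n(\R^N)$ is an $\AS$-set (a semialgebraic arc-analytic map has arc-symmetric graph; see \cite{Kur88,KP07}). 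The very same Taylor computation shows that $\sigma_{*n}$ prolongs $\sigma_*$, i.e. $\pi_n\circ\sigma_*=\sigma_{*n}\circ\pi_n$ on $\L(X)$, because the $n$-jet of $\sigma\circ\gamma$ depends only on the $n$-jet of $\gamma$ through that formula. One must take $\L_n(\R^N)$ rather than $\L_n(Y)$ as target: when $X$ is singular a jet of $\L_n(X)$ need not lift to an arc on $X$, so $\sigma(\delta(t))$ truncated at order $n$ need not lie in $\L_n(Y)$; this is harmless since $C$ is $\AS$ in $\L_n(\R^N)$ anyway.

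Finally I would conclude: from $\pi_n\circ\sigma_*=\sigma_{*n}\circ\pi_n$ one gets $\sigma_*^{-1}(A)=\sigma_*^{-1}\big(\pi_n^{-1}(C)\big)=\pi_n^{-1}\big(\sigma_{*n}^{-1}(C)\big)$, so it would suffice to check that $\sigma_{*n}^{-1}(C)$ is an $\AS$-subset of $\L_n(X)$; and indeed $\sigma_{*n}^{-1}(C)=\mathrm{pr}_1\big(\Gamma\cap(\L_n(X)\times C)\big)$ with $\Gamma\cap(\L_n(X)\times C)$ an $\AS$-set and $\mathrm{pr}_1$ injective on $\Gamma$, so this image is $\AS$ by \cite[Theorem 4.5]{Par04}, whence $\sigma_*^{-1}(A)$ is a cylinder at level $n$. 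I expect the only genuinely delicate step to be the claim that $\sigma_{*n}$ has $\AS$-graph — this is where the real-analyticity of $\sigma$ is really used, through arc-analyticity of the prolongation together with the arc-symmetry of graphs of semialgebraic arc-analytic maps; the rest is formal.
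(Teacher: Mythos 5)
Your proof is correct and follows essentially the same route as the paper: both factor $\sigma_*^{-1}(\pi_n^{-1}(C))$ as $\pi_n^{-1}(\sigma_{*n}^{-1}(C))$ via the induced map $\sigma_{*n}$ on $n$-jets and then check that $\sigma_{*n}$ has $\AS$-graph, so that $\sigma_{*n}^{-1}(C)$ is $\AS$. You are in fact more careful than the paper on the one delicate point --- the paper simply asserts that $\sigma_{*n}$ is polynomial, which is literally true only for regular $\sigma$, whereas your Fa\`a di Bruno argument with Nash coefficients in $a_0$ (together with the target $\L_n(\R^N)$ to sidestep liftability of singular jets) correctly handles the general Nash case.
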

\begin{proof}
Assume that $A=\pi_n^{-1}(C)$ where $C$ is an $\AS$-subset of $\L_n(Y)$. Then we have the following commutative diagram:
$$
\xymatrix{
\L(X) \ar[r]^{\sigma_*} \ar[d]_{\pi_n} & \L(Y) \ar[d]^{\pi_n} \\
\L_n(X) \ar[r]_{\sigma_{*n}} & \L_n(Y)
}
$$
Notice that $\sigma_{*n}$ is polynomial and thus its graph is $\AS$ so that the inverse image of an $\AS$-set by $\sigma_{*n}$ is also an $\AS$-set. Hence $\sigma_*^{-1}(A)=\pi_n^{-1}(\sigma_{*n}^{-1}(C))$ where $\sigma_{*n}^{-1}(C)$ is $\AS$.
\end{proof}

\begin{prop}\label{prop:preim}
Let $\sigma:M\rightarrow X$ be a proper generically one-to-one Nash map where $M$ is a non-singular $d$-dimensional algebraic subset of $\R^p$ and $X$ a $d$-dimensional algebraic subset of $\R^N$. \\
If $A\subset\L(X)$ is a measurable subset, then the inverse image $\sigma_*^{-1}(A)$ is also measurable.
\end{prop}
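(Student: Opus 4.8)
The plan is to decompose $\sigma_*^{-1}(A)$ along the sets $\Delta_{e,e'}\subset\L(M)$ of Lemma \ref{lem:CoV}, over which $\sigma_*$ is an $\AS$ piecewise trivial fibration, to transport the measurability data of $A$ through $\sigma_*$ on each piece, and then to reassemble everything with Proposition \ref{prop:measurableseries}.

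\emph{Set-up.} Since $M$ is non-singular, every cylinder of $\L(M)$ is stable (Proposition \ref{prop:stabcyl}); combined with Lemma \ref{lem:invcyl}, this means that $\sigma_*^{-1}$ of a stable set of $\L(X)$ is a stable set of $\L(M)$. Let $C_\sigma\subset M$ be the critical set of $\sigma$ (where all $d$-minors of $\Jac_\sigma$ vanish); it is a proper algebraic subset, since $\sigma$ is generically a local diffeomorphism. For $e\in\N$ put $E_e=\{\eta\in\L(M):\ord_t\jac_\sigma(\eta)=e\}$; this condition involves only $\pi_e(\eta)$ and cuts out a Zariski-constructible, hence $\AS$, subset of $\L_e(M)$, so $E_e$ is a cylinder and therefore stable. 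One has $\L(M)=\L(C_\sigma)\sqcup\bigsqcup_{e\ge0}E_e$, where $\L(C_\sigma)\subset\L(\clos[\mathrm{Zar}]{C_\sigma})$ has measure zero, and also $\sigma_*^{-1}(\L(X_\sing))=\L(\sigma^{-1}(X_\sing))$ has measure zero because $\dim\sigma^{-1}(X_\sing)<d$ by the generic injectivity of $\sigma$. Refining $E_e$ by the unique $e'$ with $\sigma_*\eta\in\L^{(e')}(X)\setminus\L^{(e'-1)}(X)=:C_{e'}(X)$ (with $\L^{(-1)}(X)=\varnothing$), and noting $E_e\cap\sigma_*^{-1}(\L^{(e')}(X))=\Delta_{e,e'}$, we obtain
$$\sigma_*^{-1}(A)=N_0\ \sqcup\ \bigsqcup_{(e,e')\in\N^2}\big(\sigma_*^{-1}(A\cap C_{e'}(X))\cap\Delta_{e,e'}\big),$$
where $N_0\subset\L(C_\sigma)\cup\sigma_*^{-1}(\L(X_\sing))$, being a subset of a measurable set of measure zero, is measurable.

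\emph{Each piece is measurable.} Fix $(e,e')$. Then $\Delta_{e,e'}=E_e\cap\sigma_*^{-1}(\L^{(e')}(X))$ is stable, being the intersection of a cylinder with $\sigma_*^{-1}$ of the stable set $\L^{(e')}(X)$ (Proposition \ref{prop:ptf}), and $A\cap C_{e'}(X)$ is measurable (intersection of the measurable set $A$ with the stable set $C_{e'}(X)$). Take measurability data $A'_{\tilde m},C'_{\tilde m,i}$ for $A\cap C_{e'}(X)$, which we may take inside the stable set $C_{e'}(X)$. Then $\sigma_*^{-1}(A'_{\tilde m})\cap\Delta_{e,e'}$ and $\sigma_*^{-1}(C'_{\tilde m,i})\cap\Delta_{e,e'}$ are stable, and for $n\gg1$ the set $\sigma_*^{-1}(C'_{\tilde m,i})\cap\Delta_{e,e'}$ is a union of fibers of $\sigma_{*n}$ inside $\Delta_{e,e',n}$ (using parts (i)--(iv) of Lemma \ref{lem:CoV}); hence the $\R^e$-fibration of Lemma \ref{lem:CoV}(iv), restricted to a union of fibers, gives $\dim(\sigma_*^{-1}(C'_{\tilde m,i})\cap\Delta_{e,e'})\le e+\dim C'_{\tilde m,i}<e+\tilde m$. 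Choosing $\tilde m=m-e$ shows that $\sigma_*^{-1}(A\cap C_{e'}(X))\cap\Delta_{e,e'}$ is measurable; moreover the same computation bounds its virtual dimension by $e+\dim C_{e'}(X)$, and also by $\dim E_e$ since this piece lies inside the stable set $E_e$.

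\emph{Reassembly and the crux.} By Proposition \ref{prop:dimCi}, $\dim C_{e'}(X)\to-\infty$ as $e'\to\infty$, which controls, for fixed $e$, the pieces with large $e'$. The key point is that $\dim E_e\to-\infty$ as $e\to\infty$, which controls, uniformly in $e'$, all the pieces with large $e$; together these two facts show that for each $m$ only finitely many of the countably many pieces have virtual dimension $\ge m$, so that, after enumerating the pieces as a sequence with virtual dimensions tending to $-\infty$, Proposition \ref{prop:measurableseries} yields measurability of the union, and hence of $\sigma_*^{-1}(A)=N_0\sqcup(\text{that union})$. To prove $\dim E_e\to-\infty$ I would argue that $\ord_t\jac_\sigma(\eta)\ge e$ forces $\ord_t f(\eta)$ to be at least proportional to $e$ for every $f$ vanishing on $C_\sigma$ (a quantitative vanishing estimate near $C_\sigma$, equivalently the real Nullstellensatz applied to the ideal generated by the $d$-minors of $\Jac_\sigma$), so that $E_e\subset\pi_{i(e)}^{-1}\big(\L_{i(e)}(\clos[\mathrm{Zar}]{C_\sigma})\big)$ with $i(e)\to\infty$, and then apply Lemma \ref{lem:dimLS} to the proper subvariety $\clos[\mathrm{Zar}]{C_\sigma}\subset M$. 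This estimate — that arcs along which the Jacobian of $\sigma$ vanishes to high order form an asymptotically negligible family — is the main technical obstacle; granting it, the remainder is bookkeeping with Lemmas \ref{lem:invcyl} and \ref{lem:CoV} and Propositions \ref{prop:dimCi} and \ref{prop:measurableseries}.
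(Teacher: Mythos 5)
Your architecture --- split $\sigma_*^{-1}(A)$ along the sets $\Delta_{e,e'}$ of Lemma \ref{lem:CoV}, transport the approximating data of $A$ through the $\R^e$-fibration on each piece to get $\dim\bigl(\sigma_*^{-1}(C'_{\tilde m,i})\cap\Delta_{e,e'}\bigr)\le e+\dim C'_{\tilde m,i}$, and reassemble with Proposition \ref{prop:measurableseries} --- is coherent, and the per-piece dimension count is exactly the intended use of Lemma \ref{lem:CoV}. Where you diverge from the paper is the reassembly: the paper does not sum over all of $\N^2$, but invokes the quasi-compactness Lemma \ref{lem:finitesubcov} to cover $\L(M)\setminus\L(S')$ by \emph{finitely many} $\Delta_{e,e'}$, so a single constant $k$ bounds every Jacobian order that occurs and no asymptotics in $e$ are needed. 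You keep the full countable decomposition, and the price is that you must show the pieces with large $e$ are uniformly negligible, i.e.\ $\dim E_e\to-\infty$.

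That estimate is the genuine gap. You state it, correctly identify it as the crux, and sketch a proof (a Łojasiewicz-type inequality for the Nash ideal generated by the $d$-minors of $\Jac_\sigma$, followed by Lemma \ref{lem:dimLS} applied to $\clos[\mathrm{Zar}]{C_\sigma}$), but you do not carry it out, and it is not routine: the minors are Nash functions defined only in local coordinates, so the Jacobian ideal must first be globalized; the inequality you need is $|f|^{r}\le C\sum_j\delta_j^2$ for every polynomial $f\in I(\clos[\mathrm{Zar}]{C_\sigma})$ with an exponent $r$ \emph{uniform along all of} $C_\sigma$ (immediate near a compact piece, but $M$ is an arbitrary affine nonsingular variety); only then does $E_e\subset\pi_{i(e)}^{-1}\bigl(\L_{i(e)}(\clos[\mathrm{Zar}]{C_\sigma})\bigr)$ with $i(e)$ of order $e/r$ follow, feeding into Lemma \ref{lem:dimLS}. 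Note also that "the real Nullstellensatz" alone does not produce order estimates; it is the Łojasiewicz inequality that does. Since by your own account everything else is bookkeeping, this unproved estimate is the entire mathematical content of the proposition on your route, so the proof as written is incomplete. (The estimate itself is true --- it is the real analogue of the convergence of $\int\LL^{-\ord_t\jac_\sigma}$ --- and your instinct that infinitely many $e$ genuinely occur, e.g.\ already for a single blowing-up where every $E_e$ is nonempty, is sound; but the estimate has to be proved, not granted.)
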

\begin{proof}
Let $$S'=\clos[\mathrm{Zar}]{\sigma^{-1}(X_\sing\cup S)\cup\Sigma_\sigma}$$ where $S\subset X$ is as in Definition \ref{defn:gen1to1} and $\Sigma_\sigma$ is the critical set of $\sigma$. Notice that the Zariski-closure of a semialgebraic set doesn't change its dimension. Therefore $\L(S')$ is a measurable subset of $\L(M)$ with measure $0$.

Hence $\sigma_*^{-1}(A)$ is measurable if and only if $\sigma_*^{-1}(A)\setminus\L(S')$ is measurable and then $$\mu\left(\sigma_*^{-1}(A)\right)=\mu\left(\sigma_*^{-1}(A)\setminus\L(S')\right)$$

Since $A$ is measurable, there exists $A_m$ and $C_{m,i}$ as in Definition \ref{defn:measurable}. Hence for all $m\in\Z_{<0}$, $$\sigma_*^{-1}(A)\Delta\sigma_*^{-1}(A_m)\subset\bigcup_i\sigma_*^{-1}(C_{m,i})$$ and 
\begin{equation}\label{eqn:meas}
\left(\sigma_*^{-1}(A)\setminus\L(S')\right)\Delta\left(\sigma_*^{-1}(A_m)\setminus\L(S')\right)\subset\bigcup_i\left(\sigma_*^{-1}(C_{m,i})\setminus\L(S')\right)
\end{equation}

By Lemma \ref{lem:invcyl} the sets $\sigma_*^{-1}(A_m)$ and $\sigma_*^{-1}(C_{m,i})$ are cylinders, therefore they are stable sets by Proposition \ref{prop:stabcyl} since $M$ is non-singular.

By definition of $S'$, $$\L(M)\setminus\L(S')\subset\bigcup_{e,e'}\Delta_{e,e'}$$

By Lemma \ref{lem:finitesubcov}, there exists $k$ such that $$\L(M)\setminus\L(S')\subset\bigcup_{e,e'\le k}\Delta_{e,e'}$$

Thus, by Lemma \ref{lem:CoV}, $\dim\left(\sigma_*^{-1}(C_{m,i})\setminus\L(S')\right)<k+m$.

This allows one to prove that $\sigma_*^{-1}(A)\setminus\L(S')$ is measurable by shifting the index $m$ in \eqref{eqn:meas}.
\end{proof}

\begin{prop}\label{prop:measIm}
Let $\sigma:M\rightarrow X$ be a proper generically one-to-one Nash map where $M$ is a non-singular $d$-dimensional algebraic subset of $\R^p$ and $X$ a $d$-dimensional algebraic subset of $\R^N$. \\
If $A\subset\L(M)$ is a measurable subset, then the image $\sigma_*(A)$ is also measurable.
\end{prop}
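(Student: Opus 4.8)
The plan is to reduce the statement about images to the already-proved statement about inverse images (Proposition \ref{prop:preim}) by exploiting that $\sigma$ is generically one-to-one. First I would set up the same exceptional locus as in the proof of Proposition \ref{prop:preim}: let $S\subset X$ be the semialgebraic set from Definition \ref{defn:gen1to1}, let $\Sigma_\sigma$ be the critical set of $\sigma$, and put $S'=\clos[\mathrm{Zar}]{\sigma^{-1}(X_\sing\cup S)\cup\Sigma_\sigma}$ and $T=\clos[\mathrm{Zar}]{\sigma(S')}$. Both $S'$ and $T$ have dimension strictly less than $d$ (Zariski closure of a semialgebraic set preserves dimension, and $\sigma(S')$ is a proper semialgebraic subset of $X$ since $\sigma$ is generically one-to-one off $S$). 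Hence $\L(S')\subset\L(M)$ and $\L(T)\subset\L(X)$ are measurable of measure zero, so it suffices to prove that $\sigma_*\!\left(A\setminus\L(S')\right)$ is measurable; and by Corollary \ref{cor:measALe} it is enough to check that $\sigma_*\!\left(A\setminus\L(S')\right)\cap\L^{(e)}(X)$ is measurable for every $e\gg0$.

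The key observation is that, away from these exceptional sets, $\sigma_*$ is a bijection between $\L(M)\setminus\L(S')$ and $\L(X)\setminus\L(T)$ with inverse $\tau:=\left(\sigma_*\right)^{-1}$ which is itself "cylinder-friendly" in the sense of Lemma \ref{lem:CoV}. Concretely, writing $\Delta_{e,e'}\subset\L(M)$ as in Lemma \ref{lem:CoV}, one has $\L(M)\setminus\L(S')\subset\bigcup_{e,e'}\Delta_{e,e'}$, and by Lemma \ref{lem:finitesubcov} applied to a measurable piece, finitely many $(e,e')$ with $e,e'\le k$ already cover any given stable subset. On each $\Delta_{e,e'}$, part (iv) of Lemma \ref{lem:CoV} says $\sigma_{*n}$ restricted to $\Delta_{e,e',n}=\pi_n(\Delta_{e,e'})$ is an $\AS$ piecewise trivial fibration with fiber $\R^e$ onto its image, an $\AS$-set, and parts (i)--(ii) give that this holds compatibly across levels $n\ge\max(2e,e')$. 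I would use this to show: if $B\subset\L(M)$ is a stable set contained in some $\Delta_{e,e'}$, then $\sigma_*(B)$ is stable and $\mu(\sigma_*(B))=\LL^{-e}\mu(B)$ — the image at level $n$ of $\pi_n(B)$ under $\sigma_{*n}$ is $\AS$, is a union of fibers by (iii), satisfies the cylinder condition $\sigma_*(B)=\pi_n^{-1}(\sigma_{*n}(\pi_n(B)))$ by (i)--(ii), and the $\R^e$-fiber of $\sigma_{*n}$ is absorbed into the fiber comparison between consecutive truncation maps.

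With this building block in hand the argument runs as follows. Given the measurability data $A_m,C_{m,i}$ for $A$, I would decompose $A_m\setminus\L(S')$ and $C_{m,i}\setminus\L(S')$ into finitely many stable pieces $A_m\cap\Delta_{e,e'}$, $C_{m,i}\cap\Delta_{e,e'}$ (using Lemma \ref{lem:finitesubcov} to bound $e,e'\le k$ for the stable sets in question), apply the building block to each piece to get stable images with controlled dimension (the image of a piece of dimension $<m$ has dimension $<m$, since $\mu$ of the image is $\LL^{-e}$ times $\mu$ of the piece), and assemble $\sigma_*(A_m)\setminus\L(T)$ and $\bigcup_i(\sigma_*(C_{m,i})\setminus\L(T))$ as the required approximating stable set and error cylinders for $\sigma_*(A)\setminus\L(T)$. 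Since $\sigma_*$ respects symmetric differences and unions on the locus $\L(M)\setminus\L(S')$ (bijectivity there), the inclusion $(\sigma_*(A)\setminus\L(T))\,\Delta\,(\sigma_*(A_m)\setminus\L(T))\subset\bigcup_i(\sigma_*(C_{m,i})\setminus\L(T))$ follows, possibly after the usual shift of the index $m$ by the constant $k$, exactly as at the end of the proof of Proposition \ref{prop:preim}. The main obstacle I anticipate is the bookkeeping in the building block: verifying that $\sigma_*(B)$ genuinely satisfies the stable-set condition $\sigma_*(B)=\pi_m^{-1}(\pi_m(\sigma_*(B)))$ for all large $m$ and that the fiber is exactly $\R^d$ after folding in the $\R^e$ from Lemma \ref{lem:CoV}(iv) and the $\R^d$ from the truncation fibers on $M$ — this is where the hypotheses "$M$ non-singular" and "$\sigma$ proper generically one-to-one" are really used, and where care with the bounds $n\ge\max(2e,e')$ is essential.
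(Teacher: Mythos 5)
Your proposal is correct and follows essentially the same route as the paper: the same exceptional locus $S'$, reduction to $A\setminus\L(S')$, the covering $\L(M)\setminus\L(S')\subset\bigcup_{e,e'\le k}\Delta_{e,e'}$ via Lemma \ref{lem:finitesubcov}, the observation (via Lemma \ref{lem:CoV}) that images of stable sets off $\L(S')$ are stable of non-increasing dimension, and the push-forward of the approximating data through $\sigma_*(A)\,\Delta\,\sigma_*(A_m)\subset\sigma_*(A\,\Delta\,A_m)$. (Only caveat: $\sigma_*$ restricted to $\L(M)\setminus\L(S')$ is injective but need not be onto $\L(X)\setminus\L(T)$; this does not matter, since the symmetric-difference inclusion you need holds for arbitrary maps.)
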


\begin{proof}
We use the same $S'$ as in the proof of Proposition \ref{prop:preim}. Then $\L(S')$ and $\sigma_*\left(\L(S')\right)$ have measure $0$ so that it is enough to prove that $\sigma_*\left(A\setminus\L(S')\right)$ is measurable.

\begin{lemma}\label{lem:stableIm}
There exists $k$ such that for every stable set $B\subset\L(M)\setminus\L(S')$, $\sigma_*(B)$ is stable and $\dim\left(\sigma_*(B)\right)<\dim(B)-k$.
\end{lemma}
\begin{proof}
By definition of $S'$ and Lemma \ref{lem:finitesubcov}, there exists $k$ such that $$B\subset\L(M)\setminus\L(S')\subset\bigcup_{e,e'\le k}\Delta_{e,e'}$$

Then the lemma derives from Lemma \ref{lem:CoV}.
\end{proof}

Assume that $A$ is measurable with the data $A_m,C_{m,i}$ then $$A\Delta A_m\subset\bigcup C_{m,i}$$ so that $$(A\setminus\L(S'))\Delta(A_m\setminus\L(S'))\subset\bigcup C_{m,i}\setminus\L(S')$$ and $$\sigma_*(A\setminus\L(S'))\Delta\sigma_*(A_m\setminus\L(S'))\subset\sigma_*\left((A\setminus\L(S'))\Delta(A_m\setminus\L(S'))\right)\subset\bigcup\sigma_*\left(C_{m,i}\setminus\L(S')\right)$$

Then we may conclude using Lemma \ref{lem:stableIm}.
\end{proof}

\begin{thm}\label{thm:CoV}
Let $\sigma:M\rightarrow X$ be a proper generically one-to-one Nash map where $M$ is a non-singular $d$-dimensional algebraic subset of $\R^p$ and $X$ a $d$-dimensional algebraic subset of $\R^N$. \\
Let $A\subset\L(X)$ be a measurable set. Let $\alpha:A\rightarrow\N\cup\{\infty\}$ be such that $\LL^{-\alpha}$ is integrable. \\
Then $\LL^{-(\alpha\circ\sigma_*+\ord_t\jac_\sigma)}$ is integrable on $\sigma_*^{-1}(A)$ and
$$\int_{A\cap\Im(\sigma_*)}\LL^{-\alpha}\d\mu_{\L(X)}=\int_{\sigma^{-1}_*(A)}\LL^{-(\alpha\circ\sigma_*+\ord_t\jac_\sigma)}\d\mu_{\L(M)}$$
where $\sigma_*:\L(M)\rightarrow\L(X)$ is induced by $\sigma$.
\end{thm}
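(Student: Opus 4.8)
The plan is to reduce the statement to the change of variables key lemma (Lemma~\ref{lem:CoV}) by decomposing both sides into countable sums over the pieces $\Delta_{e,e'}$ and over the level sets of the relevant functions, and then matching the pieces via the piecewise trivial fibrations furnished by that lemma. First I would dispose of measure-zero issues: replacing $M$ by $M\setminus\L(S')$ and $X$ by $X\setminus\sigma(\L(S'))$ as in the proofs of Propositions~\ref{prop:preim} and~\ref{prop:measIm} (here $S'$ is the Zariski closure of $\sigma^{-1}(X_\sing\cup S)$ together with the critical locus of $\sigma$), I may assume $\sigma$ restricts to a bijection between $\L(M)\setminus\L(S')$ and its image, and that $A\cap\Im(\sigma_*)$ is exactly the image of $\sigma_*^{-1}(A)\setminus\L(S')$. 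Over this complement we have the covering $\L(M)\setminus\L(S')\subset\bigcup_{e,e'}\Delta_{e,e'}$, and by Lemma~\ref{lem:finitesubcov} applied to the stable (cylinder) exhaustions this reduces to finitely many pairs $(e,e')$ once we have localized to a fixed dimension truncation.

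Next I would set up the bookkeeping. On $\sigma_*^{-1}(A)\setminus\L(S')$ the integrand of the right-hand side is $\LL^{-(\alpha\circ\sigma_*+\ord_t\jac_\sigma)}$; I would stratify this domain by the joint level sets $\{\ord_t\jac_\sigma=e\}\cap\{\alpha\circ\sigma_*=m\}\cap\{\sigma_*(\gamma)\in\L^{(e')}(X)\setminus\L^{(e'-1)}(X)\}$, i.e.\ by $\Delta_{e,e'}$ intersected with $\sigma_*^{-1}(\alpha^{-1}(m))$. By Lemma~\ref{lem:CoV}(iii)--(iv), after truncating at level $n\ge\max(2e,e')$ large enough (also larger than the stabilization level for the stable approximants $A_m$ of $A$ produced by measurability), the map $\sigma_{*n}$ restricted to $\pi_n$ of such a piece is an $\AS$ piecewise trivial fibration with fiber $\R^e$ onto its image; hence the class of the source truncation equals $\LL^{(n+1)e}\cdot$(wait—$\LL^e$ times the image class, corrected by the difference in ambient dimension—here one must be careful: $M$ and $X$ both have dimension $d$, so the normalizing factor $\LL^{-(n+1)d}$ is the same on both sides, and the fibrewise factor is exactly $\LL^e$). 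Concretely, $\mu_{\L(M)}(\text{piece})=\LL^{-e}\,\mu_{\L(X)}(\sigma_*(\text{piece}))$, where the $\LL^{-e}$ is precisely what the extra $\LL^{-\ord_t\jac_\sigma}=\LL^{-e}$ in the integrand absorbs. Summing first over $m$ with weight $\LL^{-m}$, then over $e$ with the already-present weight $\LL^{-e}$, and then over $e'$ (these last sums converge because $\dim C_i(X)\to-\infty$ by Proposition~\ref{prop:dimCi}, controlling the $e'$-tails), reassembles $\int_{\sigma_*^{-1}(A)}\LL^{-(\alpha\circ\sigma_*+\ord_t\jac_\sigma)}\,\d\mu_{\L(M)}$ on one side and $\int_{A\cap\Im(\sigma_*)}\LL^{-\alpha}\,\d\mu_{\L(X)}$ on the other. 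Integrability of the right-hand integrand then follows from integrability of $\LL^{-\alpha}$ together with the uniform $\LL^{-e}$ gain.

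The subtle points I expect to fight with are, first, the interchange of the various infinite sums in $\widehat\M$: one must check that all the relevant families tend to $0$ (equivalently have virtual dimension tending to $-\infty$) uniformly enough to rearrange, which is where Lemma~\ref{lem:finitesubcov}, Proposition~\ref{prop:dimCi}, and the completeness of $\widehat\M$ do the real work; second, verifying that the pieces $\Delta_{e,e'}\cap\sigma_*^{-1}(\alpha^{-1}(m))$ are genuinely measurable and that their $\sigma_*$-images are measurable with the asserted measures — this is exactly Propositions~\ref{prop:preim} and~\ref{prop:measIm} combined with Lemma~\ref{lem:CoV}, but one must be careful that $\alpha^{-1}(m)$ being measurable in $\L(X)$ transports correctly. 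The genuinely delicate step, and the one I would treat most carefully, is the cylinder/stable approximation: $A$ is only measurable, not stable, so one works with stable approximants $A_m$ with $A\Delta A_m\subset\bigcup C_{m,i}$, $\dim C_{m,i}<m$, applies the bijectivity statements of Lemma~\ref{lem:CoV}(i)--(ii) to transfer these approximations through $\sigma_*$ with a fixed finite loss $k$ in dimension (as in Lemma~\ref{lem:stableIm}), and only then passes to the limit $m\to-\infty$; the change of variables identity is first proved for stable $A$ and then extended by this limiting argument, using that both sides are continuous in the $\widehat\M$-topology along such approximations.
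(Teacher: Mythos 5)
Your proposal follows essentially the same route as the paper: remove the measure-zero set $\L(S')$, use Lemma~\ref{lem:finitesubcov} to reduce to finitely many pieces $\Delta_{e,e'}$, apply Lemma~\ref{lem:CoV} so that the fiber $\R^e$ of the truncated fibration is exactly compensated by the factor $\LL^{-\ord_t\jac_\sigma}$ in the integrand, and reassemble the sums in $\widehat\M$ using Propositions~\ref{prop:preim} and~\ref{prop:measIm} for measurability. The only blemish is a sign slip in your ``concrete'' formula, which should read $\mu_{\L(M)}(\text{piece})=\LL^{e}\,\mu_{\L(X)}(\sigma_*(\text{piece}))$ (consistent with the class relation you state just before), so that multiplying by the integrand's $\LL^{-e}$ yields the desired equality.
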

\begin{proof}
Set $\beta=\alpha\circ\sigma_*+\ord_t\jac_\sigma$. By Proposition \ref{prop:preim}, $\sigma^{-1}_*(A)$ and the fibers of $\alpha\circ\sigma_*$ are measurable.

Notice that $$\beta^{-1}(n)=\bigsqcup_{e=0}^n\left((\alpha\circ\sigma_*)^{-1}(n-e)\cap(\ord_t\jac_\sigma)^{-1}(e)\cap\sigma_*^{-1}(A)\right)$$ so that the fibers of $\beta$ are measurable.

As in the proof of Proposition \ref{prop:preim}, up to replacing $\sigma_*^{-1}(A)$ by $\sigma_*^{-1}(A)\setminus\L(S')$, we may assume that $$\sigma_*^{-1}(A)\subset\bigcup_{e,e'\le k}\Delta_{e,e'}$$

Using Lemma \ref{lem:CoV}, we obtain
\begin{align*}
\int_{\sigma_*^{-1}(A)}\LL^{-(\alpha\circ\sigma_*+\ord_t\jac_\sigma)}\d\mu_{\L(M)} &= \sum_{e,e'\le k}\int_{\sigma_*^{-1}(A)\cap\Delta_{e,e'}}\LL^{-(\alpha\circ\sigma_*+\ord_t\jac_\sigma)}\d\mu_{\L(M)} \\
&=\sum_{e,e'\le k}\sum_{n\ge e}\mu\left(\gamma\in\sigma_*^{-1}(A)\cap\Delta_{e,e'},\,\alpha\circ\sigma_*(\gamma)=n-e\right)\LL^{-n} \\
&=\sum_{e,e'\le k}\sum_{n\ge e}\mu\left(\gamma\in A\cap\sigma_*(\Delta_{e,e'}),\,\alpha(\gamma)=n-e\right)\LL^{-(n-e)} \\
&=\sum_{e,e'\le k}\sum_{n\ge 0}\mu\left(\gamma\in A\cap\sigma_*(\Delta_{e,e'}),\,\alpha(\gamma)=n\right)\LL^{-n} \\
&=\sum_{n\ge 0}\sum_{e,e'\le k}\mu\left(\gamma\in A\cap\sigma_*(\Delta_{e,e'}),\,\alpha(\gamma)=n\right)\LL^{-n} \\
&=\sum_{n\ge 0}\mu\left(\gamma\in A\cap\Im(\sigma_*),\,\alpha(\gamma)=n\right)\LL^{-n} \\
&=\int_{A\cap\Im(\sigma_*)}\LL^{-\alpha}\d\mu_{\L(X)}
\end{align*}
Notice that $\Im(\sigma_*)$ is measurable by Proposition \ref{prop:measIm}.
\end{proof}

\section{An inverse mapping theorem for blow-Nash maps}
\subsection{Blow-Nash and generically arc-analytic maps}
\begin{defn}[{\cite[Définition 4.1]{Kur88}}]
Let $X$ and $Y$ be two real algebraic sets. We say that $f:X\rightarrow Y$ is arc-analytic if for every real analytic arc $\gamma:(-1,1)\rightarrow X$ the composition $f\circ\gamma:(-1,1)\rightarrow Y$ is also real analytic.
\end{defn}

\begin{defn}[{\cite[Definition 2.22]{Cam16}}]
Let $X$ and $Y$ be two algebraic sets. We say that the map $f:X\rightarrow Y$ is generically arc-analytic if there exists an algebraic subset $S\subset X$ satisfying $\dim S<\dim X$ and such that if $\gamma:(-1,1)\rightarrow X$ is a real analytic arc not entirely included in $S$, then the composition $f\circ\gamma:(-1,1)\rightarrow Y$ is also real analytic.
\end{defn}

\begin{defn}
Let $X$ and $Y$ be two algebraic sets. We say that $f:X\rightarrow Y$ is blow-Nash if $f$ is semialgebraic and if there exists a finite sequence of algebraic blowings-up with non-singular centers $\sigma:M\rightarrow X$ such that $f\circ\sigma:M\rightarrow Y$ is real analytic (and hence Nash).
\end{defn}

\begin{lemma}[{\cite[Lemma 2.27]{Cam16}}]\label{lem:BNisGenAA}
Let $f:X\rightarrow Y$ be a semialgebraic map between two real algebraic sets. Then $f:X\rightarrow Y$ is blow-Nash if and only if $f$ is generically arc-analytic. \\
\end{lemma}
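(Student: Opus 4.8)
The plan is to prove the two implications separately, the forward direction being essentially formal and the converse relying on resolution of singularities together with the arc-analytic analogue of Hironaka's approach developed in \cite{BM90,Par94,Kur88}. For the forward direction, suppose $f:X\to Y$ is blow-Nash, so there is a finite composition of blowings-up with non-singular centers $\sigma:M\to X$ with $f\circ\sigma$ Nash. First I would observe that $\sigma$ is proper and is an isomorphism over a Zariski-dense open subset $X\setminus S$ with $\dim S<\dim X$ (each blow-up center is proper and nowhere dense, so the image of the total exceptional locus is such an $S$). Given a real analytic arc $\gamma:(-1,1)\to X$ not entirely contained in $S$, the set $\gamma^{-1}(S)$ is a proper analytic subset of $(-1,1)$, hence discrete; on the complement $\gamma$ lifts uniquely to an arc $\tilde\gamma$ in $M$, and by properness of $\sigma$ this lift extends analytically across the discrete bad set (a standard removable-singularity / valuative-criterion argument for arcs — one can also invoke that $\sigma$ restricted to a neighborhood is a finite modification and lift the arc into the compact fiber). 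Then $f\circ\gamma=(f\circ\sigma)\circ\tilde\gamma$ is a composition of the Nash map $f\circ\sigma$ with an analytic arc, hence analytic. This gives generic arc-analyticity with the same $S$.

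For the converse, assume $f:X\to Y$ is semialgebraic and generically arc-analytic, with exceptional set $S\subset X$, $\dim S<\dim X$. The idea is to find, by resolution, a tower of blowings-up $\sigma:M\to X$ with $M$ non-singular such that $f\circ\sigma$ becomes analytic everywhere on $M$; being semialgebraic as well, it is then automatically Nash by the characterization recalled in the Geometric Framework section (semialgebraic $+$ $C^\infty$ $\Rightarrow$ Nash, and for arc-analytic-turned-analytic maps one first upgrades arc-analyticity to $C^\infty$). Concretely I would proceed as follows. Step one: replace $X$ by a resolution $X'\to X$ (finite sequence of blowings-up with non-singular centers), so we may assume $X$ is non-singular; note $f$ pulls back to a still-generically-arc-analytic map. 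Step two: on a non-singular $X$, invoke the key regularity result of \cite{BM90} (or \cite{Par94}): a semialgebraic arc-analytic function on a non-singular real algebraic set becomes Nash after a finite sequence of blowings-up with non-singular centers — equivalently, arc-analytic semialgebraic functions are exactly the ``blow-Nash'' ones. The only subtlety is that $f$ is merely \emph{generically} arc-analytic, i.e. arc-analytic only off $S$; here I would first blow up (inside $X$) along $S$ and its strata enough to arrange that $f$ extends arc-analytically across the exceptional divisor, or argue directly that the locus of non-arc-analyticity of a semialgebraic map is a semialgebraic set of dimension $<\dim X$ contained in (the closure of) $S$, then apply the \cite{BM90} machinery to the components. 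Step three: composing the resolution of Step one with the blowings-up of Step two produces the required $\sigma:M\to X$ with $f\circ\sigma$ Nash, which is exactly the blow-Nash condition.

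I expect the main obstacle to be the passage from \emph{generic} arc-analyticity to genuine arc-analyticity (hence to the applicability of the Bierstone--Milman / Parusiński structure theorem). One must show that, after finitely many blowings-up with non-singular centers supported over $S$, the strict transform of $f$ becomes arc-analytic in a full neighborhood — i.e. that the ``arc-analytic bad locus'' can be resolved by blow-ups, not merely ignored. This is where one uses that $S$ can be taken to be a normal-crossings-type divisor after preliminary blow-ups, that an arc hitting the exceptional divisor transversally can be perturbed to avoid it, and a limiting/continuity argument to propagate analyticity of $f\circ\gamma$ from generic arcs to all arcs. Once arc-analyticity holds everywhere on a non-singular model, the cited results of \cite{BM90,Par94} finish the job, and combining with the semialgebraicity of $f$ and the real-analytic-equals-Nash criterion closes the argument. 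The forward direction and the final bookkeeping (composing two finite towers of blow-ups is again a finite tower of blow-ups) are routine.
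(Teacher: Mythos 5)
The paper does not actually prove this lemma: it is quoted directly from \cite[Lemma 2.27]{Cam16}, so there is no in-paper argument to measure yours against, and the whole burden falls on your own reasoning. Your forward implication is essentially correct and standard: an analytic arc not contained in the image $S$ of the exceptional locus lifts to an analytic arc through a finite tower of blowings-up with non-singular centers, and then $f\circ\gamma=(f\circ\sigma)\circ\tilde\gamma$ is analytic because $f\circ\sigma$ is Nash. (Do justify the lift by the explicit local factorisation in blow-up charts --- writing the pullback of the ideal of the center along $\gamma$ as $t^k\cdot(\text{unit})$ --- rather than by a ``removable singularity'' argument: a continuous extension of an analytic map across a point need not be analytic, so that phrase does not carry the weight you put on it.)

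The converse contains a genuine gap, precisely at the step you yourself identify as the main obstacle. The structure theorems of \cite{BM90,Par94} apply to semialgebraic functions that are arc-analytic \emph{everywhere} on a non-singular variety, whereas after your Step one you only have a map that is generically arc-analytic: compositions with arcs contained in the preimage of $S$ are completely uncontrolled. None of your three proposed remedies closes this. ``Blow up along $S$ until $f$ extends arc-analytically across the exceptional divisor'' is a restatement of the desired conclusion, not an argument. The observation that the non-arc-analyticity locus has dimension $<\dim X$ is exactly the hypothesis you already have and does not make the Bierstone--Milman theorem applicable. And analyticity of $f\circ\gamma$ cannot be ``propagated by a limiting/continuity argument'' from nearby generic arcs: analyticity of the compositions is not a property that passes to limits of arcs, and this is precisely the kind of reasoning that fails in the arc-analytic category. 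As written, the reduction to \cite{BM90,Par94} is therefore unjustified; a correct proof must produce the modification $\sigma$ by another mechanism (for instance by passing to the Zariski closure of the graph of $f$ and resolving it, which is the kind of device used in \cite{Cam16}), and until that is done the converse direction is not established.
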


\begin{rem}
In the non-singular case, the previous lemma derives from \cite{BM90} or \cite{Par94}.
\end{rem}

\begin{assumption}\label{hyp:S}
For the rest of this section we assume that $X\subset\R^N$ and $Y\subset\R^M$ are two $d$-dimensional algebraic sets and that $f:X\rightarrow Y$ is blow-Nash. Since $f$ is, in particular, semialgebraic, it is real analytic in the complement of an algebraic subset $S$ of $X$ of dimension $<d$. We may choose $S$ sufficiently big so that $S$ contains the singular set of $X$ and the non-analyticity set of $f$.
Because $f$ is blow-Nash we may suppose, moreover, that $f$ is analytic on every analytic arc $\gamma$ not included entirely in $S$. Then for every $\gamma\in\L(X)\setminus\L(S)$, $f\circ\gamma\in\L(Y)$.
\end{assumption}

We say that such $f$ \emph{is generically of maximal rank} if the Jacobian matrix of $f$ is of rank $d$ on a dense semialgebraic subset of $X\setminus S$. 

Let $\gamma\in\L(X)\setminus\L(S)$. Then the limit of tangent spaces $T_{\gamma(t)} X$ exists in the Grassmannian $\mathbb G_{N,d}$ of $d$-dimensional linear subspaces of $\R^N$. After a linear change of coordinates we may assume that this limit is equal to $\R^d\subset\R^N$. Then $(x_1,\ldots,x_d)$ is a local system of coordinates at every $\gamma(t)$, $t\ne 0$. Fix $J=\{j_1,\ldots,j_d)$ with $1\le j_1<\cdots<j_d\le M$. Then, for $t\ne 0$,
$$\d f_{j_1}\wedge\cdots\wedge\d f_{j_d}(\gamma(t))=\eta_J(t)\,\d x_{1}\wedge\cdots\wedge\d x_{d},$$
where $\eta_J(t)$ is a semialgebraic function, well-defined for $t\ne 0$.
Indeed, let $\Gamma_f\subset\R^{N+M}$ denote the graph of $f$ and let $\tau_{\Gamma_f}:\Reg(\Gamma_f)\to\mathbb G_{N+M,d}$ be the Gauss map. It is semialgebraic, see e.g. \cite[Proposition 3.4.7]{BCR}, \cite{Kur92}.
Denote by $\widetilde{\Gamma_f}$ the closure of its image and by $\pi_f:\widetilde{\Gamma_f}\to\Gamma_f$ the induced projection.
Then $\gamma$ lifts to a semialgebraic arc $\overline\gamma$ in $\widetilde{\Gamma_f}$. The limits $\lim_{t\to 0^+}\overline\gamma(t)$ and $\lim_{t\to 0^-}\overline\gamma(t)$ exist, and as follows from Proposition \ref{prop:orders} they coincide.

Denote by $E\to\mathbb G_{N+M,d}$ the tautological bundle. Thus each fiber of $E\to\mathbb G_{N+M,d}$ is a $d$-dimensional vector subspace of $\R^{N+M}$.
We denote by $(x_1,\ldots,x_N,f_1,\ldots f_M)$ the linear coordinates in $\R^{N+M}$.
Then the restriction of alternating $d$-forms to each $V^d\in\mathbb G_{N+M,d}$ gives an identity $$\d f_{j_1}\wedge\cdots\wedge\d f_{j_d} = \eta_J(V^d)\,\d x_{1}\wedge\cdots\wedge\d x_{d}$$ that defines a semialgebraic function $\eta_J(V_d)$ on $\mathbb G_{N+M,d}$ with values in $\R\cup\{\pm \infty\}$.
Then $\eta_J (t) = \eta_J (\overline \gamma(t))$.
As follows from Proposition \ref{prop:orders}, $\eta_J(t)$ is meromorphic and $\ord_t\eta_J\in \Z \cup \{\infty\}$.

The following notion generalizes the order defined in Definition \ref{defn:ordjac1}.

\begin{defn}\label{defn:ordjac}
\emph{The order of the Jacobian determinant of $f$ along $\gamma$} is defined as $$\ord_t\detjac_f(\gamma)=\min_J\{\ord_t\eta_J(t)\}.$$
If $\eta(t)\equiv0$ then we define its order as $+\infty$.
\end{defn}

\begin{defn}\label{defn:jacbounded}
We say that \emph{the Jacobian determinant of $f$ is bounded from above (resp. below)} if there exists $S\subset X$ as in \ref{hyp:S} such that for every $\gamma\in\L(X)\setminus\L(S)$, $\ord_t\detjac_f(\gamma)\ge0$ (resp. $\ord_t\detjac_f(\gamma)\le0$).  
\end{defn}

\subsection{Resolution diagram of $f$}
Let $g:M\rightarrow X$ be a Nash map where $M$ is a non-singular algebraic set and $X$ is an algebraic subset of $\mathbb R^N$. Denote by $\mathcal O_M$ the sheaf of Nash functions on $M$.  \\ Assume that $\dim M=\dim X=d$. Then the Jacobian sheaf $\J_g$ of $g$ is the sheaf of  $\mathcal O_M$-ideals generated, in a local system of coordinates $z_1,\ldots,z_d$ on $M$,  by 
$$
\J_g =
\left\langle\pd{\left(g_{i_1},\ldots,g_{i_d}\right)}{(z_1,\ldots,z_d)},\,
1\le i_1<\cdots<i_d\le N\right\rangle.
$$

Let $D=\cup D_i \subset M$ be a divisor with normal crossings.  We say that a local system of coordinates $z_1,\ldots,z_d$ at  $p\in M$ is compatible with $D$ if 
$D$ at $p$ is the zero set of a monomial in $z_i$ or $p\not \in D$.

\begin{prop}\label{prop:JsInvertible}
Let $g:M\rightarrow X$ be as in the previous definition. Then there exists $\sigma:\tilde M\rightarrow M$ the composition of a sequence of blowings-up with smooth algebraic centers and an algebraic divisor with simple normal crossings $D=\cup D_i\subset \tilde M$ such that in any local Nash system of coordinates compatible with $D$, 
$\J_{g\circ\sigma}$ is generated by a monomial. 
\end{prop}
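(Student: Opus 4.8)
The plan is to reduce the statement to the principalization of an ideal sheaf and then invoke resolution of singularities. The key observation is that $\J_g$ is a coherent sheaf of $\mathcal O_M$-ideals on the non-singular algebraic set $M$: in a local system of Nash coordinates $z_1,\ldots,z_d$ it is generated by the finitely many $d\times d$ minors $\frac{\partial(g_{i_1},\ldots,g_{i_d})}{\partial(z_1,\ldots,z_d)}$, and under a change of coordinates these minors get multiplied by the (nowhere-vanishing) determinant of the Jacobian of the coordinate change, so the sheaf $\J_g$ is well-defined independently of the chart. Moreover, since $g$ is a Nash map and $M$ is an affine algebraic set, $\J_g$ is in fact generated by global algebraic data: one can take $g$ to be given by Nash functions $g_i$ which, up to shrinking or working with the algebraic structure, produce an honest sheaf of ideals of finite type. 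The goal $\J_{g\circ\sigma}$ being locally monomial is exactly the statement that the pullback ideal sheaf $\sigma^{-1}\J_g\cdot\mathcal O_{\tilde M}$ is locally principal and generated by a monomial in coordinates adapted to the exceptional divisor.

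First I would make precise the behaviour of $\J_g$ under a blowing-up $\sigma:\tilde M\to M$ with smooth center. The chain rule gives, for the composition $g\circ\sigma$, the factorization $\J_{g\circ\sigma}=(\sigma^{-1}\J_g\cdot\mathcal O_{\tilde M})\cdot\J_\sigma$, where $\J_\sigma$ is the Jacobian ideal of the blowing-up map itself; since $\sigma$ is a composition of blowings-up with smooth centers, $\J_\sigma$ is already locally principal, generated by a monomial in coordinates compatible with the total exceptional divisor (this is the standard computation of the Jacobian of a monomial blow-up, e.g. in an affine chart of the blowup of $\mathbb{A}^d$ along a coordinate subspace). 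Hence, if we can arrange $\sigma^{-1}\J_g\cdot\mathcal O_{\tilde M}$ to be locally monomial with respect to a simple normal crossings divisor containing the exceptional locus, then $\J_{g\circ\sigma}$, being a product of two such monomial ideals, is itself locally monomial. This reduces the proposition to the principalization (monomialization) of the ideal sheaf $\J_g$.

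Next I would apply embedded resolution of singularities / principalization of ideals in the algebraic category (Hironaka, in the form valid over $\mathbb{R}$, or Bierstone--Milman / Włodarczyk): there exists $\sigma:\tilde M\to M$, a finite composition of blowings-up with smooth algebraic centers, such that the pullback ideal $\sigma^{-1}\J_g\cdot\mathcal O_{\tilde M}$ is locally principal and is the ideal of a simple normal crossings divisor, i.e. locally generated by a monomial in coordinates compatible with the exceptional divisor $D=\cup D_i$. One subtlety to address is that principalization is usually stated for ideal sheaves on an algebraic variety, while here we want the conclusion in terms of \emph{Nash} local coordinates compatible with $D$; but any algebraic local system of coordinates adapted to a simple normal crossings algebraic divisor is in particular a Nash system, and conversely a Nash change of coordinates has nowhere-vanishing Jacobian, so ``generated by a monomial'' is invariant under passing between the algebraic and the Nash local charts. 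Thus the conclusion transfers verbatim.

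The main obstacle, and the step requiring the most care, is the first one: verifying that $\J_g$ is genuinely a coherent (finite-type) sheaf of ideals to which the algebraic principalization theorem applies, rather than merely an object defined chart-by-chart via Nash functions. One must ensure the $g_i$ can be taken algebraic — or at least that the ideal they generate via minors has an algebraic model on $M$ — so that resolution of singularities in the algebraic category is legitimately available; alternatively one invokes resolution in the real-analytic/Nash category directly. Once $\J_g$ is recognized as such a sheaf, the rest is the combination of the chain-rule factorization $\J_{g\circ\sigma}=(\sigma^{-1}\J_g\cdot\mathcal O_{\tilde M})\cdot\J_\sigma$ with the monomiality of $\J_\sigma$ for a composition of smooth blow-ups, both of which are routine local computations.
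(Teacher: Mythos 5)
Your reduction is sound as far as it goes: the chain-rule factorization $\J_{g\circ\sigma}=(\sigma^{-1}\J_g\cdot\mathcal O_{\tilde M})\cdot\J_\sigma$ is correct (since $\dim\tilde M=\dim M=d$, each $d$-minor of $\Jac_{g\circ\sigma}$ is the pulled-back minor times $\det D\sigma$), and $\J_\sigma$ is indeed monomial for a composition of smooth blowings-up. But the step you yourself flag as ``requiring the most care'' is precisely the content of the proposition, and you leave it unresolved. Because $g$ is only a Nash map, the generators of $\J_g$ are Nash functions, not regular ones, so $\J_g$ is not an algebraic ideal sheaf and Hironaka's algebraic principalization does not apply to it; conversely, principalization in the real-analytic or Nash category does not by itself produce the smooth \emph{algebraic} centers that the statement demands. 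This is not a matter of translating between algebraic and Nash charts after the fact: it is the whole difficulty, and saying ``one must ensure the $g_i$ can be taken algebraic, or invoke resolution in the Nash category'' names the gap without closing it.

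The paper closes it by a different route. It fixes a global regular $d$-form $\omega_M$ on the affine variety $M$ and a regular function $\varphi$ with $Z(\omega_M)\subset\div\varphi$, and writes $\varphi\,g^*\omega_I=h_I\,\omega_M$ for each coordinate form $\omega_I=\d x_{i_1}\wedge\cdots\wedge\d x_{i_d}$. Each comparison function $h_I$ is a single Nash function, and a single Nash function can be made locally a monomial times a unit by blowings-up with smooth algebraic centers (\cite[Proposition 2.11]{Cam17}; this exploits the nontrivial polynomial relation satisfied by a Nash function). The procedure is applied to all the $h_I$ \emph{and to their pairwise differences}, after which Zariski's observation (\cite{Zar67}, \cite[Lemma 6.5]{BM90}) that among normal-crossings functions whose differences are also normal crossings one must divide the others yields that the ideal generated by the $\tilde g^*\omega_I$ is locally \emph{principal} and monomial. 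This last point is also missing from your write-up: even granting that each generator of the pulled-back ideal becomes a monomial, the ideal need not be principal (consider $(z_1,z_2)$), so once you cannot invoke the principalization theorem as a black box, some version of the differences/Zariski argument is unavoidable.
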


\begin{proof}
First we fix a regular (in the algebraic sense) differential form $\omega_M$ of degree $d$ on $M$ which is not identically zero on every component of $M$.

There exists a sequence of blowings-up whose Jacobian determinant is a normal crossing divisor and such that the compositions with the coefficients of $\omega_M$ are also normal crossings, see for instance \cite[Theorem 1.10]{BM97}. Then the zero set of the pullback of $\omega_M$ is a divisor with simple normal crossings.
 
Up to composing with blowings-up, this allows us to assume that the zero set of $\omega_M$, denoted by $Z(\omega_M)$, is a divisor with simple normal crossings.

Since $M$, and hence $Z(\omega_M)$, is affine there is a regular function $\varphi$ on $M$ such that $Z(\omega_M)\subset \div \varphi$. By performing additional blowings-up we may assume that $\div (\varphi)$ is a divisor with normal crossings.  

For $I=\{i_1,\ldots,i_d\}\subset\{1,\ldots,N\}$, let $\pi_I:X\rightarrow\mathbb R^d$ be defined by $\pi_I(x_1,\ldots,x_N)=(x_{i_1},\ldots,x_{i_d})$. We consider the algebraic differential form $\omega_I=\pi^*(\d x_{i_1}\wedge\cdots\wedge\d x_{i_d})$. 
Then 
$$
\varphi g^*\omega_I=  h_I \omega_M,  
$$
where $h_I$ is a Nash function on $M$. By \cite[Proposition 2.11]{Cam17}, we may find a finite composition of blowings-up $\sigma:\tilde M\rightarrow M$, with smooth algebraic centers, such that $h_I\circ\sigma$ is locally a monomial times a Nash unit.  
More precisely, let $D \subset \tilde M$ be the union of $\sigma ^{-1} (\div \varphi)$ and the exceptional divisor of $\sigma$.  We may suppose that $D$ is with simple normal crossings and hence $h_I\circ\sigma$  equals a monomial times a Nash unit, in any local system of coordinates compatible with $D$.

Let $z_1,\ldots,z_d$ be such a local system of coordinates and let $\tilde g= g\circ \sigma$. Then
$$\tilde g^*\omega_I=\pd{\left(\tilde g_{i_1},\ldots,\tilde g_{i_d}\right)}{(z_1,\ldots,z_d)}\d z=\varphi^{-1}h_I\sigma^*\omega_M=z^{\alpha_I}u(z)\d z,$$
where $u$ is a unit. 

We may apply the above procedure to all $\omega_I$ and their differences.  
Then, by \cite[Beginning of the proof of Proposition 2.1]{Zar67}, see also \cite[Lemma 6.5]{BM90}, we conclude that the ideal generated by such 
$\tilde g^*\omega_I$ is, locally, principal and generated by a monomial.
\end{proof}

Let $p:\Gamma\rightarrow X$ be a composition of finitely many algebraic blowings-up such that $q=f\circ p:\Gamma\rightarrow Y$ is Nash and $\sigma:M\rightarrow\Gamma$ be an algebraic resolution of $\Gamma$ such that $\mathcal J_{p\circ\sigma}$ (resp. $\mathcal J_{q\circ\sigma}$) is locally generated by a monomial.
Notice that $M$ is a non-singular real algebraic variety and that $f\circ p\circ\sigma$ is Nash.
Note that if $M$ is not connected then $\mathcal J_{p\circ\sigma}$ can vanish identically on a connected component of $M$ if and only if $f$ is not generically of maximal rank.

We call $p:\Gamma\rightarrow X$ and $\sigma:M\rightarrow\Gamma$ satisfying the above properties \emph{a resolution diagram of $f$}. By Hironaka's desingularisation theorem and Proposition \ref{prop:JsInvertible}, such a diagram always exists but is not unique.

\begin{equation}\label{eqn:resolution}
\begin{gathered}
\xymatrix{
& M \ar[d]^\sigma & \\
& \Gamma \ar[dl]_p \ar[dr]^q & \\
X \ar[rr]_f & & Y
}
\end{gathered}
\end{equation}
 
By choosing the algebraic subset $S\subset X$ bigger (but still with $\dim S<d$) we may assume that $(p\circ\sigma)_*$ induces a bijection $\L(M)\setminus\L(S')\rightarrow\L(X)\setminus\L(S) $, where  $S'=(p\circ\sigma)^{-1}(S)$. Note that $\dim S'<d$.
Thus the diagram \eqref{eqn:resolution} induces a diagram
$$\xymatrix{
& \L(M)\setminus\L(S') \ar@{^{(}->>}[dl]_{(p\circ\sigma)_*} \ar[dr]^{(q\circ\sigma)_*} & \\
\L(X)\setminus\L(S) \ar[rr]_{f_*} & & \L(Y)
}$$
where we denote $f_*=(q\circ\sigma)_*\circ(p\circ \sigma)_*^{-1}$. 

Now we show how to compute the order of the Jacobian determinant of $f$ along $\gamma$  using a resolution diagram.

\begin{prop}\label{prop:orders}
Let $\gamma\in\L(X)\setminus\L(S)$ and let $\tilde\gamma=(p\circ \sigma)_*^{-1}(\gamma)$. Then
\begin{equation}
\ord_t\detjac_f(\gamma)=\ord_t\detjac_{q\circ\sigma}(\tilde\gamma(t))-\ord_t\detjac_{p\circ\sigma}(\tilde\gamma(t)).
\end{equation}
\end{prop}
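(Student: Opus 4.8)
\section*{Proof proposal}

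The plan is to reduce the statement to a computation with differential forms for $t\neq 0$ and then to compare orders of meromorphic germs. Write $\phi=p\circ\sigma$ and $\psi=q\circ\sigma=f\circ\phi$, so that $\phi(\tilde\gamma)=\gamma$ and $\psi(\tilde\gamma)=f\circ\gamma$, both Nash maps from the non-singular $d$-dimensional $M$. As in the construction preceding Definition \ref{defn:ordjac}, after a linear change of coordinates on $\R^N$ we may assume $\lim_{t\to 0}T_{\gamma(t)}X=\R^d=\{x_{d+1}=\cdots=x_N=0\}$, so that the projection to $(x_1,\dots,x_d)$ restricts to a local Nash isomorphism of $X$ onto $\R^d$ near $\gamma(t)$ for all small $t\neq 0$; this is the coordinate system on $X$ used to define $\eta_J$. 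Fix a local Nash coordinate system $z=(z_1,\dots,z_d)$ on $M$ near $\tilde\gamma(0)$; all Jacobian $d$-minors below are taken with respect to $z$, and since a change of coordinates multiplies each minor by a Nash unit, the resulting orders along $\tilde\gamma$ do not depend on this choice.

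For $J=\{j_1<\cdots<j_d\}\subseteq\{1,\dots,M\}$ I would compute $\psi^*(\d y_{j_1}\wedge\cdots\wedge\d y_{j_d})$ in two ways. Directly it equals $\d\psi_{j_1}\wedge\cdots\wedge\d\psi_{j_d}=J_\psi^{(J)}\,\d z_1\wedge\cdots\wedge\d z_d$, where $J_\psi^{(J)}$ denotes the $J$-th $d$-minor of $\Jac_{q\circ\sigma}$ (these $d$-minors, as $J$ ranges over $d$-subsets of $\{1,\dots,M\}$, are exactly those appearing in Definition \ref{defn:ordjac1}). Via $\psi^*=\phi^*\circ f^*$, using $f^*(\d y_{j_1}\wedge\cdots\wedge\d y_{j_d})=\eta_J\,\d x_1\wedge\cdots\wedge\d x_d$ on the open locus $W\subseteq X$ on which $\eta_J$ is defined, it equals $(\eta_J\circ\phi)\,\d\phi_1\wedge\cdots\wedge\d\phi_d=(\eta_J\circ\phi)\,J_\phi^{(1,\dots,d)}\,\d z_1\wedge\cdots\wedge\d z_d$. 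Hence on $\phi^{-1}(W)$,
$$J_\psi^{(J)}=(\eta_J\circ\phi)\cdot J_\phi^{(1,\dots,d)}.$$
Restricting to $\tilde\gamma$ and using $\eta_J(\phi(\tilde\gamma(t)))=\eta_J(\gamma(t))$ for small $t\neq 0$, this exhibits $t\mapsto\eta_J(\gamma(t))$ as a quotient of real analytic germs, hence meromorphic with $\ord_t\eta_J\in\Z\cup\{\infty\}$, and yields $\ord_t(J_\psi^{(J)}\circ\tilde\gamma)=\ord_t\eta_J+\ord_t(J_\phi^{(1,\dots,d)}\circ\tilde\gamma)$ by additivity of $\ord_t$ on products.

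The key step, and the one I expect to be the real obstacle, is to identify $\ord_t(J_\phi^{(1,\dots,d)}\circ\tilde\gamma)$ with $\ord_t\detjac_{p\circ\sigma}(\tilde\gamma)=\min_{I}\ord_t(J_\phi^{(I)}\circ\tilde\gamma)$, i.e. to show the minimal order among all $d$-minors $J_\phi^{(I)}$ ($I\subseteq\{1,\dots,N\}$, $|I|=d$) along $\tilde\gamma$ is attained at the distinguished index $(1,\dots,d)$. For this I would consider the real analytic curve of decomposable $d$-vectors $\Xi(t)=\bigwedge_{l=1}^d\bigl(\partial\phi/\partial z_l\bigr)(\tilde\gamma(t))\in\Lambda^d\R^N$, whose coordinates in the standard basis are the $J_\phi^{(I)}(\tilde\gamma(t))$. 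Since $p\circ\sigma$ has generic rank $d$ (this is forced by $(p\circ\sigma)_*$ being a bijection of arc spaces) and $\gamma\notin\L(S)$, after enlarging $S$ we may assume $\tilde\gamma$ is not contained in the critical locus of $\phi$, so $\Xi\not\equiv 0$; write $\Xi(t)=t^m\Xi_0(t)$ with $\Xi_0(0)\neq 0$ and $m=\min_I\ord_t(J_\phi^{(I)}\circ\tilde\gamma)<\infty$. For small $t\neq 0$ one has $\Xi(t)\neq 0$, so $\phi$ is immersive at $\tilde\gamma(t)$; since $\phi(M)\subseteq X$ and $\gamma(t)\in\Reg(X)$, the span of $\Xi(t)$ equals $T_{\gamma(t)}X$, hence $[\Xi_0(t)]=[\Xi(t)]$ is the Plücker point of $T_{\gamma(t)}X$. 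Letting $t\to 0$, $T_{\gamma(t)}X\to\R^d$, so $[\Xi_0(0)]$ is the Plücker point of $\R^d$, i.e. $\Xi_0(0)=c\,e_1\wedge\cdots\wedge e_d$ with $c\neq 0$. Reading off coordinates, $\ord_t(J_\phi^{(1,\dots,d)}\circ\tilde\gamma)=m$ (and $\ord_t(J_\phi^{(I)}\circ\tilde\gamma)>m$ for $I\neq(1,\dots,d)$). The same leading-term argument shows $\lim_{t\to 0^+}\overline\gamma(t)=\lim_{t\to 0^-}\overline\gamma(t)$, a $d$-plane being insensitive to the sign of a spanning $d$-vector.

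It then remains to assemble the pieces. Taking $\min_J$ in the order identity of the second paragraph, and using $\min_J\ord_t\eta_J=\ord_t\detjac_f(\gamma)$ (Definition \ref{defn:ordjac}), $\min_J\ord_t(J_\psi^{(J)}\circ\tilde\gamma)=\ord_t\detjac_{q\circ\sigma}(\tilde\gamma)$, and the identification $\ord_t(J_\phi^{(1,\dots,d)}\circ\tilde\gamma)=\ord_t\detjac_{p\circ\sigma}(\tilde\gamma)$ just obtained, we get $\ord_t\detjac_{q\circ\sigma}(\tilde\gamma)=\ord_t\detjac_f(\gamma)+\ord_t\detjac_{p\circ\sigma}(\tilde\gamma)$, which rearranges to the asserted formula.
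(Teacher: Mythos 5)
Your proof is correct and follows the paper's own (one-line) argument, which simply invokes the chain rule outside $S$. The substantive detail you supply --- that the minimum order among the $d$-minors of $\Jac_{p\circ\sigma}$ along $\tilde\gamma$ is attained at the minor corresponding to the limit tangent plane $\R^d$, proved via the leading term of the Pl\"ucker curve $\Xi(t)$ --- is precisely the point the paper leaves implicit, and your treatment of it (including the two-sided limit of $\overline\gamma$) is sound.
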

\begin{proof}
The result derives from the chain rule which holds outside $S$.
\end{proof}

\begin{cor}\label{cor:jacbounded}
Suppose that $f$ is generically of maximal rank. Then the Jacobian determinant of $f$ is bounded from above, resp. from below, if and only if at every point of $M$ a local generator of $\J_{p\circ\sigma}$ divides a local generator of $\J_{q\circ\sigma}$, resp.  a local generator of $\J_{q\circ\sigma}$ divides a local generator of $\J_{p\circ\sigma}$.  
\end{cor}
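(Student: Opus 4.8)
The plan is to combine Proposition \ref{prop:orders} with the local monomialization of the Jacobian sheaves coming from the resolution diagram. First I would recall that, since $f$ is generically of maximal rank, the ideal sheaf $\J_{p\circ\sigma}$ does not vanish identically on any connected component of $M$, so at every point $p\in M$ a local generator of $\J_{p\circ\sigma}$ is a nonzero monomial $z^{\alpha}u(z)$ (up to a Nash unit) in a local system of coordinates compatible with the normal-crossing divisor $D$; likewise a local generator of $\J_{q\circ\sigma}$ is $z^{\beta}v(z)$. For an arc $\tilde\gamma\in\L(M)\setminus\L(S')$ with $\gamma=(p\circ\sigma)_*(\tilde\gamma)$, we have $\ord_t\detjac_{p\circ\sigma}(\tilde\gamma)=\langle\alpha,\ord_t z\circ\tilde\gamma\rangle$ and similarly for $q\circ\sigma$, where $\ord_t z\circ\tilde\gamma=(\ord_t z_1\circ\tilde\gamma,\ldots,\ord_t z_d\circ\tilde\gamma)\in\N^d$. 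By Proposition \ref{prop:orders}, $\ord_t\detjac_f(\gamma)=\langle\beta-\alpha,\ord_t z\circ\tilde\gamma\rangle$.

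Next I would translate the divisibility condition into the sign condition on $\ord_t\detjac_f$. If at every point a local generator of $\J_{p\circ\sigma}$ divides one of $\J_{q\circ\sigma}$, then locally $\alpha\le\beta$ componentwise, so $\beta-\alpha\in\N^d$ and hence $\ord_t\detjac_f(\gamma)=\langle\beta-\alpha,\ord_t z\circ\tilde\gamma\rangle\ge0$ for every such arc; by Definition \ref{defn:jacbounded} this says the Jacobian determinant of $f$ is bounded from above. The case "bounded from below" is symmetric, exchanging the roles of $p$ and $q$. Conversely, suppose the divisibility fails at some point $p_0\in M$: then there is a coordinate index $j$ with $\alpha_j>\beta_j$ at $p_0$. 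I would then produce an arc $\tilde\gamma$ through $p_0$ with $\ord_t z_j\circ\tilde\gamma=1$ and $\ord_t z_k\circ\tilde\gamma$ large for $k\ne j$ (for instance $\tilde\gamma(t)=p_0+(t^{M}e_1,\ldots,t\,e_j,\ldots)$ in suitable coordinates, $M\gg0$), chosen so that the unique term $j$ dominates; composing with $p\circ\sigma$ gives $\gamma\in\L(X)\setminus\L(S)$ with $\ord_t\detjac_f(\gamma)=\beta_j-\alpha_j<0$, which contradicts boundedness from above for that choice of $S$. Since the conclusion must hold for \emph{every} admissible $S$, and shrinking the complement of $S$ only removes arcs, this indeed defeats the "bounded from above" property; the "bounded from below" direction is again symmetric.

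The main obstacle I anticipate is bookkeeping around the exceptional set $S$ and the non-uniqueness of the resolution diagram: one has to make sure that the arc constructed to witness a violation of divisibility is not accidentally contained in $S'=(p\circ\sigma)^{-1}(S)$, and that $S$ may be enlarged (keeping $\dim S<d$) so that $(p\circ\sigma)_*$ still restricts to a bijection $\L(M)\setminus\L(S')\to\L(X)\setminus\L(S)$ as arranged before \eqref{eqn:resolution}. Because the bad locus where divisibility fails is an algebraic (in fact normal-crossing) subset and, by generic maximal rank, has dimension $<d$ when it is proper, one can always choose the witnessing arc transverse to it at a generic point and avoid $S'$; this is where the hypothesis that $f$ is generically of maximal rank is essential, since it guarantees $\J_{p\circ\sigma}\ne0$ so that the exponents $\alpha$ are finite and the monomialization of Proposition \ref{prop:JsInvertible} genuinely applies on each component. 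The remaining steps are then routine.
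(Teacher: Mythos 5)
Your overall strategy is the right one and is exactly what the paper intends: since the corollary is stated right after Proposition \ref{prop:orders}, the argument is to write the local generators of $\J_{p\circ\sigma}$ and $\J_{q\circ\sigma}$ as monomials $z^{\alpha}u(z)$, $z^{\beta}v(z)$ in coordinates compatible with $D$ (Proposition \ref{prop:JsInvertible}), note that $\ord_t\detjac_{p\circ\sigma}(\tilde\gamma)=\langle\alpha,\ord_t z\circ\tilde\gamma\rangle$ and likewise for $q\circ\sigma$, and conclude via $\ord_t\detjac_f(\gamma)=\langle\beta-\alpha,\ord_t z\circ\tilde\gamma\rangle$. The forward implication (divisibility, i.e.\ $\alpha\le\beta$ componentwise, implies $\ord_t\detjac_f(\gamma)\ge0$ for all admissible $\gamma$) is correct, as is your discussion of why the witness arcs can be chosen outside $\L(S')$ for any admissible $S$.

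However, the witness arc in your converse direction is built backwards, and as written the step fails. If $\alpha_j>\beta_j$ at $p_0$, you take $\ord_t(z_j\circ\tilde\gamma)=1$ and $\ord_t(z_k\circ\tilde\gamma)=M\gg0$ for $k\ne j$, and claim the $j$-th term dominates. But then
$$\langle\beta-\alpha,\ord_t z\circ\tilde\gamma\rangle=(\beta_j-\alpha_j)\cdot 1+M\sum_{k\ne j}(\beta_k-\alpha_k),$$
so it is the \emph{other} terms that dominate as $M\to\infty$; whenever $\sum_{k\ne j}(\beta_k-\alpha_k)>0$ (e.g.\ $\alpha=(2,0)$, $\beta=(0,3)$, $j=1$, giving $-2+3M>0$) your arc has nonnegative order and witnesses nothing. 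The fix is to reverse the orders: take $\ord_t(z_j\circ\tilde\gamma)=M\gg0$ and $\ord_t(z_k\circ\tilde\gamma)$ bounded (equal to $1$ for the divisor coordinates through the base point, $0$ otherwise), so that $\langle\beta-\alpha,\ord_t z\circ\tilde\gamma\rangle=(\beta_j-\alpha_j)M+O(1)\to-\infty$; or, more simply, move the base point to a generic point of the divisor component $\{z_j=0\}$ (along which the multiplicities $\alpha_j,\beta_j$ are constant) where no other component of $D$ passes, and take any arc transverse to it, which already gives order $\beta_j-\alpha_j<0$. With that correction the rest of your plan goes through.
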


\begin{rem}
We deduce from the previous corollary that if one of the conditions of Definition \ref{defn:jacbounded} is satisfied for one $S$, then it holds for every $S$.
\end{rem}

\subsection{An inverse mapping theorem}
\begin{thm}\label{thm:IFT}
Let $f:(X,x)\rightarrow(Y,y)$ be a germ of semialgebraic homeomorphism between real algebraic sets.
Assume that $\mu_{\L(X)}(\L(X,x))=\mu_{\L(Y)}(\L(Y,y))$. \\
If $f$ is generically arc-analytic and if the Jacobian determinant of $f$ is bounded from below, then the inverse map $f^{-1}:Y\rightarrow X$ is also generically arc-analytic and the Jacobian of $f$ is bounded from above. 
\end{thm}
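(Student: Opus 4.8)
The plan is to encode both sides of the hypothesis as motivic integrals over the arc space of a common resolution, compare them through the change of variables formula (Theorem~\ref{thm:CoV}), and then read off the Jacobian bound from Corollary~\ref{cor:jacbounded}. First I would note that $f$ is generically of maximal rank: by the semialgebraic Sard theorem the critical set of $f$ restricted to $\Reg(X)\setminus S$ has image of dimension $<d$, and $f$ maps that critical set homeomorphically onto its image, so it has dimension $<d$. Then fix a resolution diagram \eqref{eqn:resolution} of $f$ and set $\pi=p\circ\sigma\colon M\to X$ and $\psi=q\circ\sigma=f\circ\pi\colon M\to Y$; these are proper generically one-to-one Nash maps out of the non-singular $d$-dimensional $M$, and after enlarging $S$ the map $\pi_*$ restricts to a bijection $\L(M)\setminus\L(S')\to\L(X)\setminus\L(S)$. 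Since $f$ is a homeomorphism with $f(x)=y$ and $q=f\circ p$, one gets $\psi^{-1}(y)=\pi^{-1}(x)$, hence $\psi_*^{-1}(\L(Y,y))=\pi_*^{-1}(\L(X,x))=:W$, the cylinder $\{\gamma\in\L(M)\colon\gamma(0)\in\pi^{-1}(x)\}$.

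Next I would apply Theorem~\ref{thm:CoV} twice with the zero integrand. Applied to $\pi$, using $\L(X,x)\setminus\L(S)\subset\Im(\pi_*)$ and that $\L(S)$ is negligible, it gives $\mu_{\L(X)}(\L(X,x))=\int_W\LL^{-\ord_t\jac_\pi}\,\d\mu_{\L(M)}$. Applied to $\psi$ it gives $\int_W\LL^{-\ord_t\jac_\psi}\,\d\mu_{\L(M)}=\mu_{\L(Y)}\big(\L(Y,y)\cap\Im(\psi_*)\big)\preceq\mu_{\L(Y)}(\L(Y,y))$, the inequality because the measure of any measurable set is $\succeq 0$ (its virtual Poincaré polynomial has positive leading coefficient). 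The hypothesis $\mu_{\L(X)}(\L(X,x))=\mu_{\L(Y)}(\L(Y,y))$ then yields $\int_W\LL^{-\ord_t\jac_\psi}\preceq\int_W\LL^{-\ord_t\jac_\pi}$. On the other hand, by Proposition~\ref{prop:orders} the hypothesis that the Jacobian of $f$ is bounded from below means $\ord_t\jac_\psi(\gamma)\le\ord_t\jac_\pi(\gamma)$ for all $\gamma\in\L(M)\setminus\L(S')$. Partitioning $W$ by the pair $(a,b)=(\ord_t\jac_\psi,\ord_t\jac_\pi)$, with $a\le b$, into measurable pieces $W_{a,b}$ and summing (Proposition~\ref{prop:measurableseries}), $\int_W\LL^{-\ord_t\jac_\pi}=\sum_{a\le b}\mu(W_{a,b})\LL^{-b}\preceq\sum_{a\le b}\mu(W_{a,b})\LL^{-a}=\int_W\LL^{-\ord_t\jac_\psi}$. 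Hence all these are equalities, so $\sum_{a<b}\mu(W_{a,b})(\LL^{-a}-\LL^{-b})=0$; since $\widehat{\M}$ is a domain and $\LL^{-a}-\LL^{-b}\neq 0$ for $a<b$, every $\mu(W_{a,b})$ with $a<b$ vanishes. In particular $\ord_t\jac_\pi=\ord_t\jac_\psi$, i.e. $\ord_t\detjac_f=0$ by Proposition~\ref{prop:orders}, for arcs centered at $x$ outside a set of measure zero, and moreover $\mu_{\L(Y)}(\L(Y,y)\setminus\Im(\psi_*))=0$.

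It then remains to upgrade this generic equality to a pointwise one. By Proposition~\ref{prop:JsInvertible} I may assume $\J_\pi=(z^\beta)$ and $\J_\psi=(z^\alpha)$ are monomial in local coordinates compatible with the divisor $D\subset M$, and "bounded from below" together with Corollary~\ref{cor:jacbounded} gives $\alpha\le\beta$ componentwise at every point. Suppose $\alpha_{j_0}<\beta_{j_0}$ at some $m\in\pi^{-1}(x)$; then $\{z_{j_0}=0\}$ is a component of $D$ through $m$, and on a chart about $m$ the locus $\{\ord_t\jac_\pi>\ord_t\jac_\psi\}$ is the cylinder $\{\gamma(0)\in Z\}$ with $Z=\{z^{\beta-\alpha}=0\}\ni m$; intersecting with $W$ yields the cylinder over the nonempty set $Z\cap\pi^{-1}(x)$, which has positive measure, contradicting the previous step. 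Thus $\J_\pi=\J_\psi$ at every point of $\pi^{-1}(x)$; combined with the points outside $S$, where $\detjac_f$ is a nonvanishing Nash function (there $f$ is analytic, of maximal rank, and $\ord_t\detjac_f\le 0$), Corollary~\ref{cor:jacbounded} shows that the Jacobian of $f$ is bounded from above. Finally, $\J_\pi=\J_\psi$ ensures that generic arcs $\delta\in\L(Y)$ lift through $\psi$ (Lemma~\ref{lem:CoV}(i)), so that $f^{-1}\circ\delta=\pi\circ\psi_*^{-1}(\delta)$ is analytic, which is precisely generic arc-analyticity of $f^{-1}$ (equivalently, $f^{-1}$ is blow-Nash by Lemma~\ref{lem:BNisGenAA}).

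The hard part is the passage in the last paragraph from the measure-theoretic statement $\ord_t\detjac_f=0$ to the sheaf-theoretic equality $\J_\pi=\J_\psi$ over $\pi^{-1}(x)$, and the parallel fact that this equality is exactly what forces $\psi$ to be arc-surjective over a neighbourhood of $y$ so that arc-analyticity can be transported to $f^{-1}$; both hinge on the monomialization of $\J_\pi$ and $\J_\psi$ provided by Proposition~\ref{prop:JsInvertible} and on translating orders along arcs into contact with the components of the exceptional divisor~$D$. The rest is bookkeeping: checking the negligibility of the various sets $\L(S)$, $\L(S')$, $\L(Y,y)\setminus\Im(\psi_*)$, and verifying that the rearrangements of the motivic sums are legitimate in $\widehat{\M}$.
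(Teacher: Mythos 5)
Your treatment of the first conclusion (boundedness of the Jacobian from above) is essentially sound and close in spirit to the paper's: the paper compares, level by level, the classes $\left[\{\gamma\in\L_n(M,E):\ord_t\jac_{q\circ\sigma}\le n\}\right]$ and $\left[\{\gamma\in\L_n(M,E):\ord_t\jac_{p\circ\sigma}\le n\}\right]$ after multiplying by $\sum_i\LL^{-i}$, and gets a contradiction from a single bad arc because the corresponding set of $n$-jets is then non-empty; your detour through the partition $W_{a,b}$ and the monomialization of $\J_\pi$, $\J_\psi$ reaches the same point. Two small repairs there: the conclusion ``each $\mu(W_{a,b})=0$'' does not follow from $\widehat\M$ being a domain (you have a sum, not a product); you need that each term is $\succeq 0$ and that a convergent sum of non-negative elements vanishes only if each term does, which is the positivity argument the paper uses. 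Also note that, strictly speaking, you do not need the pointwise monomial statement $\J_\pi=\J_\psi$ to conclude boundedness from above: a single arc violating it already produces a non-empty jet set of positive class.

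The genuine gap is in the last step, the generic arc-analyticity of $f^{-1}$. From the first part you only know $\mu_{\L(Y)}\bigl(\L(Y,y)\setminus\Im(\psi_*)\bigr)=0$, and measure zero does not mean that every arc outside an algebraic subset of smaller dimension lifts: the non-liftable arcs could a priori form a thin but not algebraically-thin family. Your appeal to Lemma \ref{lem:CoV}(i) is a misapplication: that statement lifts $\delta$ only when $\delta$ is already congruent mod $t^{n+1}$ to $\sigma_*(\gamma)$ for some $\gamma\in\Delta_{e,e'}$ on $M$, i.e.\ when $\delta$ is approximated to high order by an element of the image --- which is exactly what is in question. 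The missing idea is the propagation argument: write the Puiseux expansions of $(q\circ\sigma)^{-1}(\gamma(t))$ for $t\ge0$ and $t\le0$; if a genuinely fractional exponent $t^{a/b}$ occurs, the local H\"older continuity of the subanalytic inverse forces every analytic arc $\eta\equiv\gamma\bmod t^N$ to have the same fractional term, so an entire cylinder $\pi_N^{-1}(\pi_N(\gamma))$ of positive measure lies outside $\Im(\psi_*)$, contradicting the measure-zero statement; and if instead the two half-branch expansions disagree at an integer exponent, the homeomorphism hypothesis on $f$ together with unique liftability through $q\circ\sigma$ of the (analytic) image arc forces them to coincide. Neither step is supplied by the monomialization of $\J_\pi$ and $\J_\psi$, to which you attribute the difficulty; without this argument the passage from the motivic identity to arc-analyticity of $f^{-1}$ does not go through.
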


\begin{rem}
Notice that arc-analyticity is an open condition for semialgebraic continuous maps (See \cite[Theorem 3.1]{KP05} where it is not necessary to assume that $f$ is bounded, up to composing $f$ with a real analytic diffeomorphism $\varphi:\R\rightarrow(-1,1)$). Hence, since the above statement is local, it is enough to use real analytic arcs centered at $x$ for the arc-analyticity condition.

The same holds for the boundedness of the Jacobian of $f$: we assume that the arcs of Definition \ref{defn:jacbounded} or Corollary \ref{cor:jacbounded} are centered at $x$.
\end{rem}

\begin{proof}[Proof of Theorem \ref{thm:IFT}]
We have the commutative diagram \eqref{eqn:resolution}. Notice that $E=(p\circ\sigma)^{-1}(0)$ is algebraic since $p\circ\sigma$ is regular. By Theorem \ref{thm:CoV},
\begin{align*}
\mu_{\L(X)}\left((p\circ\sigma)_*(\L(M,E))\right) &= \int_{(p\circ\sigma)_*(\L(M,E))}\LL^{-0}\d\mu_{\L(X)} \\
&= \int_{\L(M,E)}\LL^{-\ord_t\jac_{p\circ\sigma}}\d\mu_{\L(M)} \\
&= \sum_{n\ge0}\mu_{\L(M)}\left(\L(M,E)\cap\left(\ord_t\jac_{p\circ\sigma}\right)^{-1}(n)\right)\LL^{-n}
\end{align*}
Thus
\begin{align*}
\mu_{\L(X)}\left((p\circ\sigma)_*(\L(M,E))\right)\sum_{i\ge0}\LL^{-i} &=  \sum_{i\ge0}\sum_{n\ge0}\mu_{\L(M)}\left(\L(M,E)\cap\left(\ord_t\jac_{p\circ\sigma}\right)^{-1}(n)\right)\LL^{-(i+n)}\\
&=\sum_{n\ge0}\mu_{\L(M)}\left(\gamma\in\L(M,E),\,\ord_t\jac_{p\circ\sigma}(\gamma(t))\le n\right)\LL^{-n}
\end{align*}
Similarly
$$\mu_{\L(Y)}\left((q\circ\sigma)_*(\L(M,E))\right)\sum_{i\ge0}\LL^{-i}=\sum_{n\ge0}\mu_{\L(M)}\left(\gamma\in\L(M,E),\,\ord_t\jac_{q\circ\sigma}(\gamma(t))\le n\right)\LL^{-n}$$
Hence
\begin{align*}
&\left(\mu_{\L(Y)}\left((q\circ\sigma)_*(\L(M,E))\right)-\mu_{\L(X)}\left((p\circ\sigma)_*(\L(M,E))\right)\right)\sum_{i\ge0}\LL^{-i} \\
&\quad=\sum_{n\ge0}\Big(\mu_{\L(M)}\left(\gamma\in\L(M,E),\,\ord_t\jac_{q\circ\sigma}(\gamma(t))\le n\right)-\mu_{\L(M)}\left(\gamma\in\L(M,E),\,\ord_t\jac_{p\circ\sigma}(\gamma(t))\le n\right)\Big)\LL^{-n}
\end{align*}
Since we may lift a real analytic arc non-entirely included in the exceptional locus by $p\circ\sigma$, we have $$\mu_{\L(X)}\left((p\circ\sigma)_*(\L(M,E))\right)=\mu_{\L(X)}\left(\L(X,x)\right)$$ so that
\begin{align*}
&\left(\mu_{\L(Y)}\left((q\circ\sigma)_*(\L(M,E))\right)-\mu_{\L(X)}\left(\L(X,x)\right)\right)\sum_{i\ge0}\LL^{-i} \\
&\quad=\sum_{n\ge0}\Big(\mu_{\L(M)}\left(\gamma\in\L(M,E),\,\ord_t\jac_{q\circ\sigma}(\gamma(t))\le n\right)-\mu_{\L(M)}\left(\gamma\in\L(M,E),\,\ord_t\jac_{p\circ\sigma}(\gamma(t))\le n\right)\Big)\LL^{-n}
\end{align*}
Since $\mu_{\L(Y)}(\L(Y,y))=\mu_{\L(X)}(\L(X,x))$, we obtain
\begin{align*}
&\left(\mu_{\L(Y)}\left((q\circ\sigma)_*(\L(M,E))\right)-\mu_{\L(Y)}\left(\L(Y,y)\right)\right)\sum_{i\ge0}\LL^{-i} \\
&\quad=\sum_{n\ge0}\Big(\mu_{\L(M)}\left(\gamma\in\L(M,E),\,\ord_t\jac_{q\circ\sigma}(\gamma(t))\le n\right)-\mu_{\L(M)}\left(\gamma\in\L(M,E),\,\ord_t\jac_{p\circ\sigma}(\gamma(t))\le n\right)\Big)\LL^{-n}
\end{align*}
Since $M$ is non-singular, we may simply write
\begin{align*}
&\left(\mu_{\L(Y)}\left((q\circ\sigma)_*(\L(M,E))\right)-\mu_{\L(Y)}\left(\L(Y,y)\right)\right)\sum_{i\ge0}\LL^{-i} \\
&\quad=\sum_{n\ge0}\Big(\left[\gamma\in\L_n(M,E),\,\ord_t\jac_{q\circ\sigma}(\gamma(t))\le n\right]-\left[\gamma\in\L_n(M,E),\,\ord_t\jac_{p\circ\sigma}(\gamma(t))\le n\right]\Big)\LL^{-(n+2)d}
\end{align*}
Since the Jacobian determinant $f$ is bounded from below, using Proposition \ref{prop:orders}, we get that each summand of the RHS is positive or zero (in the sense of Definition \ref{defn:order}) because the leading coefficient of the virtual Poincaré polynomial of a non-empty $\AS$-set is positive:
\begin{align*}
&\left(\mu_{\L(Y)}\left((q\circ\sigma)_*(\L(M,E))\right)-\mu_{\L(Y)}\left(\L(Y,y)\right)\right)\sum_{i\ge0}\LL^{-i} \\
&\quad=\sum_{n\ge0}\Big(\left[\left\{\gamma\in\L_n(M,E),\,\ord_t\jac_{q\circ\sigma}(\gamma(t))\le n\right\}\setminus\left\{\gamma\in\L_n(M,E),\,\ord_t\jac_{p\circ\sigma}(\gamma(t))\le n\right\}\right]\Big)\LL^{-(n+2)d}
\end{align*}
Moreover, the LHS is negative or zero since $(q\circ\sigma)_*(\L(M,E))\subset\L(Y,y)$. \\

Assume that $f$ is not bounded from above, then at least one of the summand of the RHS is positive so that we obtain a contradiction. This proves that $f$ is bounded from above. \\

Furthermore, since the RHS is zero, we obtain that
\begin{equation}\label{eqn:Im}
\mu_{\L(Y)}\left((q\circ\sigma)_*(\L(M,E))\right)=\mu_{\L(Y)}\left(\L(Y,y)\right)
\end{equation}

We are now going to prove that $f^{-1}$ is generically arc-analytic so that it is blow-Nash.

Assume by contradiction there exists $\gamma\in\L(Y,y)$ not entirely included in $f(S)\cup Y_\sing$ which may not be lifted by $q\circ\sigma$. Nevertheless, by \cite[Proposition 2.21]{Cam16},
$$(q\circ\sigma)^{-1}(\gamma(t))=\sum_{i\ge0}c_it^{\frac i a},\,t\ge0$$
and
$$(q\circ\sigma)^{-1}(\gamma(t))=\sum_{i\ge0}d_i(-t)^{\frac i b},\,t\le0.$$

By assumption $(q\circ\sigma)^{-1}(\gamma(t))$ is not analytic so that either these expansions don't coincide or they have a non-integer exponent.
\begin{enumerate}
\item We first treat the latter case. Assume that $$(q\circ\sigma)^{-1}(\gamma(t))=\sum_{i=0}^mc_it^{i}+ct^{\frac{a}{b}}+\cdots,\,t\ge0,\,m<\frac a b<m+1,\,c\neq0.$$
Since $(q\circ\sigma)^{-1}:Y\setminus f(S)\rightarrow M$ is continuous and subanalytic, it is locally Hölder so that there exists $N\in\N$ satisfying for all real analytic arc $\eta(t)$ not entirely included in $f(S)\cup Y_\sing$, $$\eta(t)\equiv\gamma(t)\mod t^N\Rightarrow(q\circ\sigma)^{-1}(\eta(t))\equiv(q\circ\sigma)^{-1}(\gamma(t))\mod t^{m+1}.$$
Thus $\pi_N^{-1}\left(\pi_N(\gamma)\right)\subset\L(Y,y)\setminus(q\circ\sigma)_*(\L(M,E))$. \\
Notice that $\pi_N^{-1}\left(\pi_N(\gamma)\right)$ is measurable as a cylinder. Let $\rho:\tilde Y\rightarrow Y$ be a resolution of $Y$. Since $\gamma$ is not entirely included in the singular set of $Y$, there exists a unique real analytic arc $\tilde\gamma$ on $M$ such that $\gamma=\rho\circ\tilde\gamma$. Let $e=\ord_t\jac_\rho(\tilde\gamma(t))$ and $e'$ be such that $\gamma\in\L^{(e')}(Y)$. We may assume that $N\ge\max(e',2e)$. Then, by Lemma \ref{lem:CoV} and since $\tilde Y$ is non-singular,
\begin{align*}
\mu_{L(Y)}\left(\pi_N^{-1}\left(\pi_N(\gamma)\right)\right)&=\mu_{\L(\tilde Y)}\left(\pi_N^{-1}\left(\pi_N(\tilde\gamma)\right)\right)\LL^{-e} \\
&=\left[\pi_N\left(\pi_N^{-1}\left(\pi_N(\tilde\gamma)\right)\right)\right]\LL^{-(N+1)d-e} \\
&\neq 0
\end{align*}
Since $\pi_N^{-1}\left(\pi_N(\gamma)\right)\subset\L(Y,y)\setminus(q\circ\sigma)_*(\L(M,E))$, we obtain that $$\mu\left(\L(Y,y)\setminus(q\circ\sigma)_*(\L(M,E))\right)\neq0$$ which contradicts \eqref{eqn:Im}.
\item We now assume that
$$\tilde\gamma^+(t)=(q\circ\sigma)^{-1}(\gamma(t))=\sum_{i=0}^{m-1}c_it^{i}+ct^m+\cdots,\,t\ge0$$
and
$$\tilde\gamma^-(t)=(q\circ\sigma)^{-1}(\gamma(t))=\sum_{i=0}^{m-1}c_it^{i}+dt^m+\cdots,\,t\le0$$
with $c\neq d$.

Notice that $(q\circ\sigma)(\gamma^\pm(t))$ are analytic so that $\gamma(t)=(f\circ q\circ\sigma)(\gamma^+(t))=(f\circ q\circ\sigma)(\gamma^-(t))$. Since $f$ is a homeomorphism, we get $(q\circ\sigma)(\gamma^+(t))=(q\circ\sigma)(\gamma^-(t))$. Since this real analytic arc is not entirely included in $S$, it may be uniquely lifted by $q\circ\sigma$ so that $\gamma^+(t)=\gamma^-(t)$. Hence $c=d$ and we obtain a contradiction.
\end{enumerate}

Thus, for all $\gamma\in\L(Y,y)\setminus\L(f(S)\cup Y_\sing)$ there exists $\tilde\gamma\in\L(M,E)$ such that $(q\circ\sigma)(\tilde\gamma(t))=\gamma(t)$. Then $f^{-1}(\gamma(t))=(p\circ\sigma)(\tilde\gamma(t))$ which is real analytic. Therefore $f^{-1}$ is generically arc-analytic and so blow-Nash.
\end{proof}

\begin{rem}
Notice that, in the above proof, we do not need a homeomorphism $f:X\rightarrow Y$ but only a homeomorphism of $f:\overline{\Reg(X)}\rightarrow\overline{\Reg(Y)}$.
\end{rem}

Under the assumptions of the previous theorem, we derive the following corollary from Lemma \ref{lem:BNisGenAA}.
\begin{cor}\label{cor:IFTcor1}
Let $f:(X,x)\rightarrow(Y,y)$ be a semialgebraic homeomorphism germ between real algebraic sets with $\dim X=\dim Y$.
Assume moreover that $\mu_{\L(X)}(\L(X,x))=\mu_{\L(Y)}(\L(Y,y))$. \\
If $f$ is blow-Nash and if the Jacobian determinant of $f$ is bounded from below, then the inverse $f^{-1}$ is also blow-Nash and the Jacobian determinant of $f$ is bounded from above. 
\end{cor}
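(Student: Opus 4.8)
The plan is to deduce the statement formally from Theorem~\ref{thm:IFT}, using the equivalence between blow-Nash and generically arc-analytic maps provided by Lemma~\ref{lem:BNisGenAA}.

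First I would check that $f$ meets all the hypotheses of Theorem~\ref{thm:IFT}. By assumption $f$ is a germ of semialgebraic homeomorphism between real algebraic sets, $\mu_{\L(X)}(\L(X,x))=\mu_{\L(Y)}(\L(Y,y))$, and the Jacobian determinant of $f$ is bounded from below; moreover, since $f$ is semialgebraic and blow-Nash, Lemma~\ref{lem:BNisGenAA} shows that $f$ is generically arc-analytic. Applying Theorem~\ref{thm:IFT} then yields that $f^{-1}$ is generically arc-analytic and that the Jacobian determinant of $f$ is bounded from above, which is already the second half of the conclusion.

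It remains to upgrade ``$f^{-1}$ generically arc-analytic'' to ``$f^{-1}$ blow-Nash''. For this, I would use that the inverse of a semialgebraic homeomorphism is again semialgebraic (its graph is the image of the graph of $f$ under the coordinate interchange of $\R^N\times\R^M$, hence semialgebraic by Tarski--Seidenberg). Thus $f^{-1}$ is a semialgebraic, generically arc-analytic map, and Lemma~\ref{lem:BNisGenAA}, now applied to $f^{-1}$, gives that $f^{-1}$ is blow-Nash.

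I do not expect any genuine obstacle here: the argument is a bookkeeping reduction to Theorem~\ref{thm:IFT}. The only point worth a second look is that Assumption~\ref{hyp:S}, under which Theorem~\ref{thm:IFT} and the notion of bounded Jacobian determinant are phrased, is compatible with the present data; but this is automatic, since $f$ being a homeomorphism forces $\dim X=\dim Y=d$ (so the explicit equidimensionality hypothesis is in fact redundant), and the algebraic subset $S$ can always be enlarged, while keeping $\dim S<d$, to contain both $X_\sing$ and the non-analyticity locus of $f$.
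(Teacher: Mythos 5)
Your proposal is correct and matches the paper's own derivation, which likewise obtains the corollary by combining Theorem~\ref{thm:IFT} with Lemma~\ref{lem:BNisGenAA} applied in both directions. The only detail you add beyond the paper is the (correct) observation that $f^{-1}$ is semialgebraic, which is needed to apply Lemma~\ref{lem:BNisGenAA} to it.
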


\begin{rem}
Notice that in the previous results we don't assume that $X=Y$ contrary to \cite[Main Theorem 3.5]{Cam16}.
\end{rem}

\begin{thm}\label{thm:CompareMeasures}
Let $f:(X,x)\rightarrow(Y,y)$ be a semialgebraic homeomorphism germ between algebraic sets with $\dim X=\dim Y$.
If $f$ is generically arc-analytic and if the Jacobian determinant of $f$ is bounded from below, then $\mu_{\L(X)}(\L(X,x))\preceq\mu_{\L(Y)}(\L(Y,y))$.
\end{thm}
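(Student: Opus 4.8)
The plan is to rerun the motivic computation carried out in the proof of Theorem~\ref{thm:IFT}, but without ever invoking the hypothesis $\mu_{\L(X)}(\L(X,x))=\mu_{\L(Y)}(\L(Y,y))$; doing so turns the equality obtained there into the desired inequality.

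First I would fix a resolution diagram of $f$ as in \eqref{eqn:resolution}, that is, $p\colon\Gamma\to X$ a composition of algebraic blowings-up with $q=f\circ p$ Nash and $\sigma\colon M\to\Gamma$ an algebraic resolution such that $\J_{p\circ\sigma}$ and $\J_{q\circ\sigma}$ are locally principal, generated by a monomial (Proposition~\ref{prop:JsInvertible}). Enlarging $S$ so that $\dim S<d$, $S$ contains $X_\sing$ and the non-analyticity locus of $f$, and $(p\circ\sigma)_*$ restricts to a bijection $\L(M)\setminus\L(S')\to\L(X)\setminus\L(S)$ with $S'=(p\circ\sigma)^{-1}(S)$, I set $E=(p\circ\sigma)^{-1}(x)$. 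Applying the change of variables formula (Theorem~\ref{thm:CoV}) to $p\circ\sigma$ and to $q\circ\sigma$, exactly as in the proof of Theorem~\ref{thm:IFT}, and using the identity $\mu_{\L(X)}\big((p\circ\sigma)_*(\L(M,E))\big)=\mu_{\L(X)}(\L(X,x))$ (arcs not entirely inside the exceptional locus lift through $p\circ\sigma$), I arrive at
\begin{align*}
&\big(\mu_{\L(Y)}((q\circ\sigma)_*(\L(M,E)))-\mu_{\L(X)}(\L(X,x))\big)\sum_{i\ge0}\LL^{-i} \\
&\quad=\sum_{n\ge0}\Big(\big[\gamma\in\L_n(M,E),\,\ord_t\jac_{q\circ\sigma}(\gamma(t))\le n\big]-\big[\gamma\in\L_n(M,E),\,\ord_t\jac_{p\circ\sigma}(\gamma(t))\le n\big]\Big)\LL^{-(n+2)d}.
\end{align*}
Since the Jacobian determinant of $f$ is bounded from below, Proposition~\ref{prop:orders} gives $\ord_t\jac_{q\circ\sigma}(\tilde\gamma)\le\ord_t\jac_{p\circ\sigma}(\tilde\gamma)$ along every arc; hence for each $n$ the first $\AS$-set contains the second, so each summand on the right is the class of an $\AS$-set times a power of $\LL^{-1}$, hence $\succeq 0$ (the leading coefficient of the virtual Poincaré polynomial of a non-empty $\AS$-set is positive). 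Therefore the left-hand side is $\succeq 0$.

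Next I would use the inclusion $(q\circ\sigma)_*(\L(M,E))\subset\L(Y,y)$: both sets are measurable ($\L(M,E)$ is a cylinder, hence measurable, so its image is measurable by Proposition~\ref{prop:measIm}, and $\L(Y,y)$ is a cylinder), so $\mu_{\L(Y)}((q\circ\sigma)_*(\L(M,E)))\preceq\mu_{\L(Y)}(\L(Y,y))$. Multiplying this by $\sum_{i\ge0}\LL^{-i}\succeq 0$ and adding it to the previous display yields
\begin{equation*}
\big(\mu_{\L(Y)}(\L(Y,y))-\mu_{\L(X)}(\L(X,x))\big)\sum_{i\ge0}\LL^{-i}\succeq 0.
\end{equation*}
Finally, it remains to cancel the factor $\sum_{i\ge0}\LL^{-i}$. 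Applying the ring isomorphism $\beta\colon\widehat\M\xrightarrow{\sim}\Z[u]\llbracket u^{-1}\rrbracket$, this series is sent to an element with leading term $1$, hence with positive leading coefficient; as $\Z[u]\llbracket u^{-1}\rrbracket$ is an integral domain and the leading coefficient of a product is the product of the leading coefficients, $\beta\big(\mu_{\L(Y)}(\L(Y,y))-\mu_{\L(X)}(\L(X,x))\big)$ is either $0$ or has positive leading coefficient. In both cases $\mu_{\L(X)}(\L(X,x))\preceq\mu_{\L(Y)}(\L(Y,y))$.

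I expect the only genuinely delicate point to be this last cancellation — legitimately dividing an inequality by $\sum_{i\ge0}\LL^{-i}$ inside $\widehat\M$ — which is why I would route it through $\beta$ and the multiplicativity of leading coefficients in $\Z[u]\llbracket u^{-1}\rrbracket$; everything else is a verbatim reuse of the argument in the proof of Theorem~\ref{thm:IFT}, with the equality hypothesis on the measures simply dropped.
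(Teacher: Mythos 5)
Your proposal is correct and follows essentially the same route as the paper: rerun the motivic computation from the proof of Theorem~\ref{thm:IFT}, use the inclusion $(q\circ\sigma)_*(\L(M,E))\subset\L(Y,y)$ and the lower bound on the Jacobian determinant to get $\bigl(\mu_{\L(Y)}(\L(Y,y))-\mu_{\L(X)}(\L(X,x))\bigr)\sum_{i\ge0}\LL^{-i}\succeq0$. Your final step, justifying the cancellation of the factor $\sum_{i\ge0}\LL^{-i}$ via $\beta$ and the multiplicativity of leading coefficients in $\Z[u]\llbracket u^{-1}\rrbracket$, is a detail the paper leaves implicit, and you handle it correctly.
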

\begin{proof}
Following the beginning of the proof of Theorem \ref{thm:IFT}, we obtain :
\begin{align*}
&\left(\mu_{\L(Y)}\left(\L(Y,y)\right)-\mu_{\L(X)}\left(\L(X,x)\right)\right)\sum_{i\ge0}\LL^{-i} \\
&\quad\succeq\left(\mu_{\L(Y)}\left((q\circ\sigma)_*(\L(M,E))\right)-\mu_{\L(X)}\left(\L(X,x)\right)\right)\sum_{i\ge0}\LL^{-i} \\
&\quad=\sum_{n\ge0}\Big(\mu_{\L(M)}\left(\gamma\in\L(M,E),\,\ord_t\jac_{q\circ\sigma}(\gamma(t))\le n\right)-\mu_{\L(M)}\left(\gamma\in\L(M,E),\,\ord_t\jac_{p\circ\sigma}(\gamma(t))\le n\right)\Big)\LL^{-n} \\
&\quad\succeq0
\end{align*}
\end{proof}

\section{An inverse mapping theorem for inner-Lipschitz maps}

\subsection{Inner distance}

Let $X$ be a connected semialgebraic subset of $\R^N$ equipped with the standard Euclidean distance.
We denote by $d_X$ the {\it inner} (also called \emph{geodesic}) distance in $X$. By definition, for $p,q\in X$, the inner distance $d_X(p,q)$ is the infimum over the length of all
rectifiable curves joining $p$ to $q$ in $X$. By \cite{KO97}, $d_X(p,q)$ is the infimum over the length of continuous semialgebraic curves in $X$ joining $p$ and $q$. 
It is proven in \cite{KO97} that $d_X$ can be approximated uniformly by subanalytic distances.

We recall some results from \cite{KO97}, based on \cite{Kur92}. Let $\varepsilon>0$, we say that a connected semialgebraic set $\Gamma\subset\R^N$ is \emph{$K_\varepsilon$-regular} if for any $p,q\in\Gamma$ we have $$d_\Gamma(p,q)\le(1+\varepsilon)|p-q|.$$
We state now a semialgebraic version of \cite[Proposition 3]{KO97}.

\begin{prop}\label{prop:ko}
Let $X\subset\R^N$ be a semialgebraic set and $\varepsilon>0$. Then there exists a finite decomposition 
$X = \bigcup_{\nu \in V} \Gamma_\nu$  such that:
\begin{enumerate}
\item each $ \Gamma_\nu$ is a semialgebraic connected analytic submanifold of  
$\R^N$,
\item each $\Gamma_\nu$ is $K_\varepsilon$-regular. 
\end{enumerate}
\end{prop}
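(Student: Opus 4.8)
The plan is to transcribe the proof of \cite[Proposition 3]{KO97} to the semialgebraic setting. Its backbone is the existence of $L$-regular cell decompositions: by \cite{Kur92} (see also \cite{Par94-L,KP06,Paw08}), for every $L>0$ there is a change of coordinates of $\R^N$ (orthogonal, so that the inner distance, hence $K_\varepsilon$-regularity, is unchanged) and a finite decomposition $X=\bigsqcup_{\nu\in V}\Gamma_\nu$ in which each $\Gamma_\nu$ is an $L$-regular cell, i.e. a connected Nash (hence analytic) submanifold of $\R^N$ obtained by an iterated construction which, at each step, either takes the graph $x_k=\xi(x')$ of a Nash function with $|\nabla\xi|\le L$ over a cell $C\subset\R^{k-1}$, or takes a band $\xi_1(x')<x_k<\xi_2(x')$ over $C$ with $|\nabla\xi_1|,|\nabla\xi_2|\le L$ (allowing $\xi_1=-\infty$ or $\xi_2=+\infty$). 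In particular each $\Gamma_\nu$ is automatically a connected semialgebraic analytic submanifold, so condition (1) holds for free; it remains to show that an $L$-regular cell is $K_{\varphi(L)}$-regular for some $\varphi$ with $\varphi(L)\to0$ as $L\to0^+$, for then one fixes $L$ so small that $\varphi(L)\le\varepsilon$ and obtains (2).

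That last claim I would prove by induction on the number of steps in the construction of the cell, a point being $K_0$-regular. Suppose $C\subset\R^{k-1}$ is $K_\delta$-regular and $\Gamma=\{(x',\xi(x')):x'\in C\}$ with $|\nabla\xi|\le L$. Given $p=(p',\xi(p'))$ and $q=(q',\xi(q'))$ in $\Gamma$, choose a rectifiable arc $\alpha$ in $C$ from $p'$ to $q'$ of length at most $(1+\delta)|p'-q'|$ and lift it to $\widetilde\alpha(s)=(\alpha(s),\xi(\alpha(s)))$; since $|\widetilde\alpha'|\le\sqrt{1+L^2}\,|\alpha'|$ and $|p'-q'|\le|p-q|$, this yields $d_\Gamma(p,q)\le\sqrt{1+L^2}\,(1+\delta)\,|p-q|$, so $\Gamma$ is $K_{\delta'}$-regular with $1+\delta'=\sqrt{1+L^2}\,(1+\delta)$. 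The band case is similar but more delicate: one connects $p$ to $q$ by first moving inside the band to a suitable level, then following the lift to the middle surface $\tfrac12(\xi_1+\xi_2)$ (whose gradient is again bounded by $L$) of a near-geodesic of $C$, then returning; restricting beforehand to a sub-band whose width near the heights of $p$ and $q$ is $O(|p-q|)$, one estimates as in \cite{KO97} that the three moves have total length at most $(1+\psi(L))|p-q|$ with $\psi(L)\to0$. Composing these estimates along the at most $N$ steps of the construction produces the desired $\varphi$.

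The only thing beyond \cite{KO97,Kur92} that needs to be checked is that everything stays semialgebraic: the $L$-regular decomposition, the functions $\xi,\xi_1,\xi_2$, their gradients, the loci where a gradient is too large (along which one cuts to force small slope) and the orthogonal change of coordinates. This is immediate, since the construction uses only semialgebraic cell decomposition, Nash implicit functions, linear projections and differentiation, all of which preserve semialgebraicity when the input is semialgebraic. I expect the band estimate to be the main technical obstacle in writing out the details — chiefly because of unbounded bands, where the return moves must be controlled without a width bound — while the graph estimate and the semialgebraicity bookkeeping are routine.
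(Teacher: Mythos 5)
Your proposal follows exactly the route the paper itself takes: Proposition \ref{prop:ko} is stated there as the semialgebraic transcription of \cite[Proposition 3]{KO97} and is justified only by citing \cite{KO97} together with the $L$-regular decompositions of \cite{Kur92}, which is precisely the machinery you reconstruct. Your sketch (induction over the graph/band construction, cutting along the loci of large gradient to force small slope, and checking that each step of the construction is definable in the semialgebraic category) is the standard proof of the cited result and is correct.
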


\begin{rem}
Given a finite family of semialgebraic sets $X_i,\,i\in I$, we can find a decomposition satisfying the above conditions and such that for any $i\in I$, $\nu\in V$, we have: either $\Gamma_\nu\subset X_i$ or $\Gamma_\nu\cap X_i=\emptyset$.
\end{rem}

For a $C^1$ map $f:X '\rightarrow\R^{M}$ defined on a submanifold $X'$ of $\R^N$ we denote by $
D_pf:T_pX\rightarrow\R^{M}
$
its differential at $p\in X'$.  Then the norm of $D_p f$ is defined by $$\Vert D_pf\Vert=\sup\left\{|D_pf(v)|:\,v\in T_p,\,|v|=1\right\}.$$

\begin{lemma}\label{lem:Lip}
Assume that $f_\nu:\Gamma_\nu\rightarrow\R^{M}$ is a $C^1$-map, such that for any $p\in\Gamma_\nu$ we have $\Vert D_pf_\nu\Vert\le L$. Then $f_\nu$ is $(1+\varepsilon)L$-Lipschitz with respect to the Euclidean distance, hence it extends continuously on $\overline\Gamma_\nu$ to a Lipschitz map with the same constant.
\end{lemma}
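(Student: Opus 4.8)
The plan is to use the $K_\varepsilon$-regularity of $\Gamma_\nu$ to convert the pointwise bound on the differential into a global Lipschitz bound, by integrating along curves in $\Gamma_\nu$. First I would fix two points $p,q\in\Gamma_\nu$ and, given $\delta>0$, choose a $C^1$ (or at least rectifiable, piecewise-$C^1$) semialgebraic curve $\gamma:[0,1]\rightarrow\Gamma_\nu$ joining $p$ to $q$ whose length satisfies $\mathrm{length}(\gamma)\le d_{\Gamma_\nu}(p,q)+\delta$; such a curve exists by the definition of the inner distance and the semialgebraic approximation result of \cite{KO97}. Then $f_\nu\circ\gamma$ is a $C^1$ curve in $\R^M$ and the chain rule gives $(f_\nu\circ\gamma)'(s)=D_{\gamma(s)}f_\nu(\gamma'(s))$, hence $|(f_\nu\circ\gamma)'(s)|\le\Vert D_{\gamma(s)}f_\nu\Vert\,|\gamma'(s)|\le L|\gamma'(s)|$.

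Integrating this inequality over $[0,1]$ yields
\begin{equation}
|f_\nu(p)-f_\nu(q)|=\left|\int_0^1(f_\nu\circ\gamma)'(s)\,\d s\right|\le\int_0^1|(f_\nu\circ\gamma)'(s)|\,\d s\le L\int_0^1|\gamma'(s)|\,\d s=L\cdot\mathrm{length}(\gamma)\le L\bigl(d_{\Gamma_\nu}(p,q)+\delta\bigr).
\end{equation}
Letting $\delta\to0$ gives $|f_\nu(p)-f_\nu(q)|\le L\,d_{\Gamma_\nu}(p,q)$, and then the $K_\varepsilon$-regularity hypothesis $d_{\Gamma_\nu}(p,q)\le(1+\varepsilon)|p-q|$ from Proposition \ref{prop:ko} immediately upgrades this to $|f_\nu(p)-f_\nu(q)|\le(1+\varepsilon)L|p-q|$, i.e. $f_\nu$ is $(1+\varepsilon)L$-Lipschitz for the Euclidean distance on $\Gamma_\nu$.

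Finally, since $\Gamma_\nu$ is a bounded (or at least locally bounded) semialgebraic set and $f_\nu$ is uniformly continuous on it — being Lipschitz — it extends uniquely to a continuous map on the closure $\overline\Gamma_\nu$; and the Lipschitz inequality, being closed, passes to the limit, so the extension is $(1+\varepsilon)L$-Lipschitz with the same constant.

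The step requiring the most care is the selection of the connecting curve: one must ensure it can be taken $C^1$ (or piecewise $C^1$) so that the chain rule and the fundamental theorem of calculus apply, rather than merely rectifiable. This is where semialgebraicity is used — semialgebraic rectifiable curves can be reparametrized to be piecewise $C^1$ with length arbitrarily close to the infimum, by \cite{KO97}, so the argument goes through without genuine obstruction; everything else is a routine application of the mean value inequality.
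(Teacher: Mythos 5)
Your proof is correct and follows essentially the same route as the paper's: choose a (piecewise) $C^1$ semialgebraic arc of nearly minimal length via \cite{KO97}, integrate the differential bound along it, invoke $K_\varepsilon$-regularity, and extend to the closure by the standard fact that Lipschitz maps extend to closures with the same constant. The only cosmetic difference is that you first bound $|f_\nu(p)-f_\nu(q)|$ by $L\,d_{\Gamma_\nu}(p,q)$ and then apply $K_\varepsilon$-regularity, whereas the paper folds both steps into one choice of arc of length at most $(1+\varepsilon')|p-q|$.
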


\begin{proof}
Let $p,q\in\Gamma_\nu$ and $\varepsilon'>\varepsilon$, then, by \cite{KO97}, there exists a $C^1$-semialgebraic arc $\lambda:[0,1]\rightarrow\Gamma_\nu$ such that $p=\lambda(0)$, $q=\lambda(1)$ of the length $|\lambda|\le(1+\varepsilon')|p-q|$.
It follows that
$$
|f_\nu(p)-f_\nu(q)|\le L|\lambda|\le(1+\varepsilon')L|p-q|.
$$
We obtain the conclusion passing to the limit $\varepsilon'\rightarrow\varepsilon$.

Notice that, on any metric space, a Lipschitz mapping extends continuously to the closure with the same Lipschitz constant.
\end{proof}

Let $X$ and $Y$ be locally closed connected semialgebraic subsets respectively of $\R^N$ and $\R^{M}$.
They are equipped with the inner distances $d_X$ and $d_Y$, respectively. Let 
$$f:X\rightarrow Y$$ be a continuous semialgebraic map.
Then there exists a semialgebraic set $X'\subset X$, which is open and dense in $X$,  such that the connected components of $X'$ are analytic submanifolds of $\R^N$, possibly of different dimensions.
Moreover $f$ restricted to each connected component of $X'$ is analytic.

\begin{prop}\label{prop:inlip}
The following conditions are equivalent:
\begin{enumerate}[label=(\roman*),ref=\ref{prop:inlip}.(\roman*)]
\item $ d_Y (f(p),f(q))\le Ld_X(p,q)$ for any $p,q\in X$, \label{item:metric}
\item $\Vert D_pf\Vert \le L$ for any $p\in X'$. \label{item:norm}
\end{enumerate}
\end{prop}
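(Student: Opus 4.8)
The plan is to prove the equivalence by showing each implication separately, with the implication $\ref{item:norm}\Rightarrow\ref{item:metric}$ being the substantive direction and $\ref{item:metric}\Rightarrow\ref{item:norm}$ the easy one. For $\ref{item:metric}\Rightarrow\ref{item:norm}$, fix $p\in X'$; since $X'$ is a manifold near $p$ and $f$ is analytic there, for a unit tangent vector $v\in T_pX'$ I would take a smooth curve $\lambda$ in $X'$ through $p$ with velocity $v$ and compare the inner distance $d_X(\lambda(0),\lambda(s))$, which is at most the Euclidean arclength of $\lambda$ and hence $|s|+o(s)$, with $d_Y(f(\lambda(0)),f(\lambda(s)))\ge |f(\lambda(s))-f(\lambda(0))| = |s|\,|D_pf(v)|+o(s)$. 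Dividing by $|s|$ and letting $s\to 0$ gives $|D_pf(v)|\le L$, so $\Vert D_pf\Vert\le L$.

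For the main direction $\ref{item:norm}\Rightarrow\ref{item:metric}$, the strategy is to reduce to the Euclidean-Lipschitz statement on the $K_\varepsilon$-regular pieces supplied by Proposition \ref{prop:ko} and then patch along a semialgebraic path. Concretely: let $\varepsilon>0$ and apply Proposition \ref{prop:ko} (together with the Remark after it) to the family $\{X,X'\}$, obtaining a finite decomposition $X=\bigcup_{\nu\in V}\Gamma_\nu$ into $K_\varepsilon$-regular semialgebraic analytic submanifolds, each either contained in $X'$ or disjoint from it; since $X'$ is dense and open, the pieces $\Gamma_\nu\subset X'$ cover a dense open subset, and on each such piece $f$ is analytic with $\Vert D_pf\Vert\le L$, so by Lemma \ref{lem:Lip} the restriction $f|_{\Gamma_\nu}$ is $(1+\varepsilon)L$-Lipschitz for the Euclidean distance and extends to $\overline{\Gamma_\nu}$ with the same constant. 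Now given $p,q\in X$ and $\varepsilon'>0$, choose a continuous semialgebraic curve $\gamma:[0,1]\to X$ from $p$ to $q$ of length at most $d_X(p,q)+\varepsilon'$; after a further semialgebraic subdivision of $[0,1]$ I may assume $\gamma$ meets the strata in a controlled way, so that on each subinterval $\gamma$ lies in the closure of a single top piece $\Gamma_{\nu}\subset X'$ and the consecutive endpoints agree. Applying the extended Lipschitz bound on each $\overline{\Gamma_\nu}$ and summing, $d_Y(f(p),f(q))\le |f(p)-f(q)|$-type telescoping gives $d_Y(f(p),f(q))\le (1+\varepsilon)L\,(\text{length of }\gamma)\le(1+\varepsilon)L(d_X(p,q)+\varepsilon')$. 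Letting $\varepsilon'\to 0$ and then $\varepsilon\to 0$ yields $d_Y(f(p),f(q))\le L\,d_X(p,q)$.

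The technical crux — and the step I expect to require the most care — is the patching argument: making precise that a length-minimizing (up to $\varepsilon'$) semialgebraic curve in $X$ can be decomposed into finitely many arcs each sitting in the closure of one $K_\varepsilon$-regular stratum contained in $X'$, so that the stratumwise Euclidean Lipschitz estimates of Lemma \ref{lem:Lip} can be chained together. This uses finiteness of the decomposition, the curve-selection/triviality features of semialgebraic sets, density and openness of $X'$ (to guarantee the curve can be perturbed, or that its image is contained up to measure zero in the top-dimensional regular pieces), and continuity of $f$ to control the behaviour at the finitely many transition points where the curve passes between strata. Once that combinatorial-geometric bookkeeping is in place, the inequality follows by a straightforward summation and two limit passages, and the converse direction is the routine differentiation argument sketched above.
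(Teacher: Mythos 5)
Your proposal is correct and follows essentially the same route as the paper: the easy direction by comparing inner and Euclidean distances at a smooth point, and the main direction by combining Proposition \ref{prop:ko}, Lemma \ref{lem:Lip} on the top-dimensional $K_\varepsilon$-regular pieces, and a finite subdivision of a near-geodesic semialgebraic arc into subarcs lying in closures $\overline{\Gamma_\nu}$ of those pieces. The ``crux'' you flag is handled in the paper exactly as you indicate, via the density of the union of the open strata (so $X\subset\bigcup\overline{\Gamma_\nu}$) and the semialgebraicity of the arc, which yields the finite partition $0=t_0<\cdots<t_k=1$.
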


\begin{proof}
The implication $\ref{item:metric}\Rightarrow\ref{item:norm}$ is obvious since 
at a smooth point $p\in X$, the inner and Euclidean distances are asymptotically equal.

To prove the converse let us fix $p,q\in X$. For any $\varepsilon>0$ there exists a continuous semialgebraic arc $\lambda:[0,1]\rightarrow X$ such that $p=\lambda(0)$, $q=\lambda(1)$ of the length $|\lambda|\le(1+\varepsilon)d_X(p,q)$. By Proposition \ref{prop:ko} there exists a finite decomposition $X'=\bigcup_{\nu\in V}\Gamma_\nu$ into $K_\varepsilon$-regular semialgebraic connected analytic submanifolds of $\R^N$. Let $X''=\bigcup_{\nu \in V'}\Gamma_\nu$ be the union of those $\Gamma_\nu$ which are open in $X'$.
Note that $X''$ is dense in $X'$. It follows that $X\subset\bigcup_{\nu \in V'}\overline{\Gamma_\nu}$. Since the arc $\lambda$ is semialgebraic there exists a finite sequence $0=t_0<\cdots<t _k=1$ such that each $\lambda([t_i,t_{i+1}])\subset\overline{\Gamma_\nu}$ for some $\nu\in V'$.
By Lemma \ref{lem:Lip} the length of $f(\lambda([t_i, t_{i+1}]))$ is bounded by  $(1+\varepsilon)|\lambda([t_i,t_{i+1}])|$. Hence 
$$|f(\lambda([0, 1]))| = \sum_{i=0}^{k-1}|f(\lambda([t_i,t_{i+1}])|\le(1+\varepsilon)L\sum_{i=0}^{k-1}|\lambda([t_i,t_{i+1}])|\le(1+\varepsilon)L|\lambda |. $$
 
Thus $$d_Y(f(p),f(q))\le|f(\lambda([0,1]))|\le(1+\varepsilon)L|\lambda|\le(1+\varepsilon)^2Ld_X(p,q)$$
We conclude by taking the limit as $\varepsilon\rightarrow0$.
\end{proof}

\subsection{An inverse mapping theorem}
We  suppose now that $f:X\rightarrow Y$ satisfies Assumption \ref{hyp:S}. Thus it is a blow-Nash map between two real algebraic sets of dimension $d$. Let $\gamma\in\L(X)\setminus\L(S)$. Let us adapt the notation introduced in the paragraph after Assumption \ref{hyp:S}. In particular we assume that the limit of tangent spaces $T_{\gamma(t)} X$ in the Grassmannian $\mathbb G_{N,d}$ is equal to $\R^d\subset\R^N$.  Then, for every $i=1,\ldots,d$ and every $j=1,\ldots,M$
$$\eta_{i,j}(t)=\frac{\partial f_{j}}{\partial x_i}$$ 
is semialgebraic. Thus the order of $\eta_{i,j}(t)$, as $t\rightarrow 0^+$ is a well defined rational number (or $+\infty$ if $f_j$ vanishes identically on $\gamma$).

\begin{defn}
\emph{The order of the Jacobian matrix of $f$ along $\gamma$} is defined as
$$\ord_{t\to 0^+}\Jac_f(\gamma(t))=\min_{i,j}\{\ord_{t\to 0^+}\eta_{i,j}(t)\}.$$
\end{defn}

\begin{rem}
The above notion shouldn't be confused with the order of the Jacobian \emph{determinant} defined in Definition \ref{defn:ordjac}. 
\end{rem}

\begin{rem}
It is likely that $\eta_{i,j}(t)$ is actually meromorphic and it is not necessary, in the above definition, to restrict to $t\to 0^+$. We leave it as an open problem.
\end{rem}

\begin{defn} 
We say that \emph{the Jacobian matrix of $f$ is bounded from above} if there is an $S$ such that for every $\gamma\in\L(X)\setminus\L(S)$, $\ord_{t\to 0^+} \Jac_f(\gamma(t))\ge 0$.
\end{defn}

One may show again that if the above condition is satisfied for one $S$ they are satisfied for every $S$. 

The following result follows from Proposition \ref{prop:inlip}.

\begin{prop}\label{prop:CforIL}
Let $f:(X,x)\rightarrow(Y,y)$ be a semialgebraic homeomorphism germ between two real algebraic set germs with $\dim(X,x)=\dim(Y,y)$. Then $f:\overline{\Reg(X)}\rightarrow\overline{\Reg(Y)}$ is inner Lipschitz iff the Jacobian matrix of $f$ is bounded from above.
\end{prop}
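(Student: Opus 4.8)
The plan is to deduce Proposition \ref{prop:CforIL} directly from Proposition \ref{prop:inlip}, the only real work being to match up the two characterizations of ``inner Lipschitz'' — the metric one from \ref{item:metric} and the pointwise-differential one from \ref{item:norm} — with the arc-theoretic condition ``$\ord_{t\to0^+}\Jac_f(\gamma)\ge0$ for all $\gamma\notin\L(S)$''. Set $X'\subset\overline{\Reg(X)}$ to be the dense open semialgebraic subset on which $f$ is analytic (as provided right before Proposition \ref{prop:inlip}, taking $X'\subset\Reg(X)\setminus S$); Proposition \ref{prop:inlip} says that $f:\overline{\Reg(X)}\to\overline{\Reg(Y)}$ is inner Lipschitz if and only if there exists $L$ with $\Vert D_pf\Vert\le L$ for all $p\in X'$. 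So the statement reduces to the equivalence: $\sup_{p\in X'}\Vert D_pf\Vert<\infty$ $\iff$ for every $\gamma\in\L(X)\setminus\L(S)$ one has $\ord_{t\to0^+}\Jac_f(\gamma)\ge0$.

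First I would handle the easy direction. If $\Vert D_pf\Vert\le L$ on $X'$, then for any arc $\gamma\in\L(X)\setminus\L(S)$ the functions $\eta_{i,j}(t)=\partial f_j/\partial x_i(\gamma(t))$ — computed in the coordinates adapted to the limit tangent space $\R^d$, as in the paragraph preceding the definition — are entries of the matrix of $D_{\gamma(t)}f$ in an orthonormal-up-to-$o(1)$ frame, hence are bounded as $t\to0^+$ (the change of basis from the chosen chart to an orthonormal basis of $T_{\gamma(t)}X$ converges to the identity because $T_{\gamma(t)}X\to\R^d$). A bounded semialgebraic function of $t$ has non-negative order at $0^+$, so $\ord_{t\to0^+}\eta_{i,j}\ge0$ for all $i,j$, whence $\ord_{t\to0^+}\Jac_f(\gamma)\ge0$.

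The converse is where the curve-selection mechanism enters. Suppose $\Vert D_pf\Vert$ is unbounded on $X'$. Because $f$ and its differential are semialgebraic, the function $p\mapsto\Vert D_pf\Vert$ is semialgebraic on $X'$, so by the curve selection lemma there is a semialgebraic arc $p(t)$, $t\in(0,\epsilon)$, with $p(t)\in X'$, $p(t)\to p_0\in\overline{X'}$, and $\Vert D_{p(t)}f\Vert\to\infty$. Reparametrizing and using that analytic (indeed Nash, since $\overline{\Reg(X)}$ is algebraic and $X'$ is Nash) arcs are dense among semialgebraic ones up to higher-order agreement, I would upgrade $p(t)$ to a genuine real analytic arc $\gamma\in\L(X)$ with $\gamma(0)=p_0$ and $\gamma$ not contained in $S$ (indeed $\gamma((0,\epsilon))\subset\Reg(X)\setminus S$), along which $\Vert D_{\gamma(t)}f\Vert$ is still unbounded — here one must be slightly careful that truncating the arc to high order does not destroy the blow-up in the norm, which holds because $\Vert D_pf\Vert$ is continuous and Hölder on $X'$. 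Then at least one entry $\eta_{i,j}(t)$ in the adapted coordinates is unbounded as $t\to0^+$ (again using that the adapted frame differs from an orthonormal frame of $T_{\gamma(t)}X$ by a matrix tending to the identity), so $\ord_{t\to0^+}\eta_{i,j}<0$, giving $\ord_{t\to0^+}\Jac_f(\gamma)<0$ and contradicting the hypothesis. Combining the two directions with Proposition \ref{prop:inlip} yields the claim.

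The main obstacle I anticipate is the bookkeeping around the ``limit tangent space'' normalization: $\ord_{t\to0^+}\Jac_f(\gamma)$ is defined using the partials $\partial f_j/\partial x_i$ in a coordinate system chosen so that $T_{\gamma(t)}X\to\R^d$, whereas $\Vert D_pf\Vert$ is intrinsic. Reconciling these requires showing that the matrix expressing an orthonormal basis of $T_{\gamma(t)}X$ in terms of $(\partial/\partial x_1,\dots,\partial/\partial x_d)$ converges to an invertible matrix (in fact to the identity) as $t\to0^+$, so that the orders of matrix entries are unchanged; this is exactly the content of the discussion after Assumption \ref{hyp:S} together with Proposition \ref{prop:orders}, and should be quotable rather than reproved. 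The remaining subtlety — that one may replace a semialgebraic witnessing curve by an analytic one in $\L(X)\setminus\L(S)$ — is standard (curve selection plus Artin approximation, or Néron desingularization, exactly as used earlier in the paper around Theorem \ref{thm:greenberg}).
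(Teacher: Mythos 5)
Your proposal is correct and follows the paper's route: the paper proves Proposition~\ref{prop:CforIL} by reducing it to Proposition~\ref{prop:inlip}, and your argument supplies exactly the bridge the paper leaves implicit, namely that $\sup_{p\in X'}\Vert D_pf\Vert<\infty$ is equivalent to $\ord_{t\to0^+}\Jac_f(\gamma)\ge0$ for all $\gamma\in\L(X)\setminus\L(S)$, via the convergence of the adapted frame to an orthonormal one and curve selection. (For the converse direction, note that the semialgebraic curve-selection arc becomes analytic after a Puiseux reparametrization $t=s^k$, which avoids the approximation/truncation issue you flag, since $\Vert D_pf\Vert$ is not H\"older where it blows up.)
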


\begin{thm}\label{thm:mainLip}
Let $f:(X,x)\rightarrow(Y,y)$ be a semialgebraic homeomorphism germ between two real algebraic set germs with $\dim (X,x)=\dim (Y,y)$.
Assume that $\mu_{\L(X)}(\L(X,x))=\mu_{\L(Y)}(\L(Y,y))$.
If $f$ is generically arc-analytic and $f^{-1}:\overline{\Reg(Y)}\rightarrow\overline{\Reg(X)}$ is inner Lipschitz, then $f^{-1}:Y\rightarrow X$ is also generically arc-analytic and $f:\overline{\Reg(X)}\rightarrow\overline{\Reg(Y)}$ is inner Lipschitz.
\end{thm}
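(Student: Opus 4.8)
\noindent\textit{Proof plan.} The idea is to rephrase both inner-Lipschitz conditions as conditions on the orders of the Jacobians of $f$ along generic arcs, to feed the resulting lower bound on the Jacobian determinant of $f$ into Theorem~\ref{thm:IFT}, and then to recover the inner-Lipschitz property of $f$ from a singular values computation. The preliminary step is to fix compatible exceptional sets: algebraic subsets $S\subset X$ and $S_Y\subset Y$ of dimension $<d$, containing the singular and non-analyticity loci, with $f(S)\subset S_Y$ and $f^{-1}(S_Y)\subset S$, and such that $f$ restricts to a Nash diffeomorphism $X\setminus S\to Y\setminus S_Y$ --- this uses that $f$ is a homeomorphism between equidimensional germs. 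Then for every $\gamma\in\L(X)\setminus\L(S)$ the arc $\delta:=f\circ\gamma$ is real analytic (here generic arc-analyticity of $f$ enters) and lies in $\L(Y)\setminus\L(S_Y)$, and for small $t>0$ we have $D_{\delta(t)}f^{-1}=(D_{\gamma(t)}f)^{-1}$ as mutually inverse isomorphisms between the respective tangent spaces.

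Next I would record the dictionary with singular values. For $\gamma\in\L(X)\setminus\L(S)$ let $\sigma_1(t)\ge\cdots\ge\sigma_d(t)>0$ be the singular values of the matrix $\bigl(\partial f_j/\partial x_i(\gamma(t))\bigr)$ written in a chart coming from the limit of the tangent spaces $T_{\gamma(t)}X$; these are semialgebraic in $t$ and hence have well-defined Puiseux orders, and the usual arithmetic of orders (order of a product is the sum of orders, order of a maximum of absolute values is the minimum of orders, and domination reverses orders) applies. By the Cauchy--Binet formula $\max_J|\eta_J(t)|$ is comparable to $\prod_i\sigma_i(t)$ and $\max_{i,j}|\eta_{i,j}(t)|$ is comparable to $\sigma_1(t)$, whence $\ord_t\detjac_f(\gamma)=\sum_{i=1}^d\ord_t\sigma_i$ and $\ord_{t\to 0^+}\Jac_f(\gamma(t))=\ord_t\sigma_1$; the analogous statements for $f^{-1}$ along $\delta$ use the singular values of $(D_{\gamma(t)}f)^{-1}$, namely $1/\sigma_d(t)\ge\cdots\ge1/\sigma_1(t)$, so that in particular $\ord_{t\to 0^+}\Jac_{f^{-1}}(\delta(t))=-\ord_t\sigma_d$.

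With this in hand the argument is short. By Proposition~\ref{prop:CforIL} applied to $f^{-1}$, the hypothesis that $f^{-1}\colon\overline{\Reg(Y)}\to\overline{\Reg(X)}$ is inner Lipschitz is equivalent to the Jacobian matrix of $f^{-1}$ being bounded from above, i.e.\ $\ord_t\sigma_d\le 0$ for every generic $\gamma$; since $\sigma_1\ge\cdots\ge\sigma_d$ forces $\ord_t\sigma_1\le\cdots\le\ord_t\sigma_d\le 0$, every $\ord_t\sigma_i\le 0$, hence $\ord_t\detjac_f(\gamma)=\sum_i\ord_t\sigma_i\le 0$: the Jacobian determinant of $f$ is bounded from below. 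Theorem~\ref{thm:IFT} now applies to $f$ (it is generically arc-analytic, $\mu_{\L(X)}(\L(X,x))=\mu_{\L(Y)}(\L(Y,y))$, and its Jacobian determinant is bounded from below) and gives that $f^{-1}$ is generically arc-analytic --- the first conclusion --- and that the Jacobian determinant of $f$ is moreover bounded from above, that is $\sum_i\ord_t\sigma_i=\ord_t\detjac_f(\gamma)\ge 0$. Combined with $\ord_t\sigma_i\le 0$ for each $i$ this forces $\ord_t\sigma_i=0$ for all $i$ and all generic $\gamma$; in particular $\ord_{t\to 0^+}\Jac_f(\gamma(t))=\ord_t\sigma_1=0$, so the Jacobian matrix of $f$ is bounded from above, and Proposition~\ref{prop:CforIL} applied to $f$ yields that $f\colon\overline{\Reg(X)}\to\overline{\Reg(Y)}$ is inner Lipschitz --- the second conclusion.

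I expect the main obstacle to be the first paragraph: arranging $S$ and $S_Y$ so that $f$ and $f^{-1}$ interchange generic arcs, so that $f\circ\gamma$ is genuinely real analytic, and so that along such arcs $f$ and $f^{-1}$ are, for small $t>0$, mutually inverse Nash diffeomorphisms between regular loci --- it is here that the homeomorphism hypothesis is used in an essential way. A secondary point, routine but worth stating, is that the singular values of a matrix with semialgebraic entries are semialgebraic functions of the parameter, which is what legitimates the order arithmetic used throughout.
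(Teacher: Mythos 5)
Your proposal is correct and follows essentially the same route as the paper: apply Proposition~\ref{prop:CforIL} to $f^{-1}$ to get the Jacobian determinant of $f$ bounded from below, invoke Theorem~\ref{thm:IFT}, and then convert the resulting two-sided bound on the determinant plus the bound on $Df^{-1}$ into a bound on the Jacobian matrix of $f$. The only difference is cosmetic: you carry out the last linear-algebra step via singular values, whereas the paper uses the cofactor formula for $(Df^{-1})^{-1}$.
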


\begin{rem}
Notice that both previous results involve the closure of the regular parts of the algebraic sets. The obtained sets $\overline{\Reg(X)}$ and $\overline{\Reg(Y)}$ do not contain any part of smaller dimension but they still may not be smooth submanifolds. \\
For instance, for the Whitney umbrella $X=\{x^2=zy^2\}$, $\overline{\Reg(X)}$ consists in the canopy (i.e. the $z\ge0$ part of $X$). Therefore $\overline{\Reg(X)}$ is singular along the half-axis $\{(0,0,z),\,z\ge0\}$. However it doesn't contain the handle of the Whitney umbrella (i.e. $\{(0,0,z),\,z<0\}$) which is a smooth manifold of dimension $1$ whereas $\dim X=2$.
\end{rem}

\begin{proof}[Proof of Theorem \ref{thm:mainLip}.]
To simplify the exposition we suppose that $X$, and hence $Y$ as well, is pure-dimensional.  
That is $X = \overline{ \Reg(X)}$ and $Y = \overline{\Reg(Y)}$.  The proof in the general case is similar.

First we apply Proposition \ref{prop:inlip} to $f^{-1}$. Hence the Jacobian determinant of $f^{-1}$ is bounded from above. Therefore the Jacobian determinant of $f$ is bounded from below and we can apply to $f$ Theorem \ref{thm:IFT}. This shows that $f^{-1}$ is generically arc-analytic and that the Jacobian determinant of $f$ is bounded from above and below.

Now we show that the Jacobian matrix of $f$ is bounded from above. 
Let $\gamma\in\L(X)\setminus\L(S)$. We may assume, as explained above, that $\R^d\subset\R^N$ is the limit of tangent spaces $T_{\gamma(t)} X$.  
Similarly by considering the limit of $T_{f(\gamma(t))} Y$ we may assume that it equals $\R^d\subset\R^M$.  
Then $y_1,\ldots,y_d$ form a local system of coordinates on $Y$ at every $f(\gamma(t))$, $t\ne 0$.  
By the assumptions the matrix $(\pd{x_i}{y_j})(f(\gamma(t))$ is bounded and its determinant is a unit.  Therefore, by the cofactor formula, its inverse, that is $(\pd{x_i}{y_j})(f(\gamma(t))$ is bounded. This shows that $f$ is inner Lipschitz by Proposition \ref{prop:CforIL}. 
\end{proof}

\begin{rem}
Notice that, in the above proof, we do not need a homeomorphism $f:X\rightarrow Y$ but only a homeomorphism of $f:\overline{\Reg(X)}\rightarrow\overline{\Reg(Y)}$.
\end{rem}

\small

\end{document}